\documentclass[12pt, a4paper]{article}
\usepackage[utf8]{inputenc}
\usepackage[margin=2cm]{geometry}

\usepackage{xcolor}
\definecolor{myblue}{rgb}{0.1 0.1 0.6}
\usepackage{hyperref}
\hypersetup{
   colorlinks=true,
   linkcolor=myblue,
   citecolor=myblue,
   urlcolor=myblue
}

\usepackage{amsmath,amssymb,amsthm}
\usepackage[shortlabels]{enumitem}
\usepackage[T1]{fontenc}
\usepackage{beton}
\usepackage[euler-digits, euler-hat-accent]{eulervm}
\DeclareFontSeriesDefault[rm]{bf}{sbc} 

\usepackage{tikz-cd}

\newtheorem{theorem}{Theorem}[section]
\newtheorem{proposition}[theorem]{Proposition}
\newtheorem{lemma}[theorem]{Lemma}
\theoremstyle{definition}
\newtheorem{corollary}[theorem]{Corollary}
\newtheorem{example}[theorem]{Example}
\newtheorem{remark}[theorem]{Remark}
\newtheorem{definition}[theorem]{Definition}

\numberwithin{equation}{section}

\newcommand{\mc}{\mathcal}

\newcommand{\id}{\operatorname{id}}

\newcommand{\lin}{\operatorname{lin}}

\newcommand{\ip}[2]{\langle{#1},{#2}\rangle}
\newcommand{\op}{{\operatorname{op}}}
\newcommand{\Tr}{{\operatorname{Tr}}}
\newcommand{\im}{{\operatorname{Im}\,}}
\newcommand{\vnten}{\bar\otimes}

\newcommand{\at}{\operatorname{At}}
\newcommand{\rel}{\operatorname{Rel}}

\newcommand{\clos}{\overline{\phantom{d}}}
\newcommand{\rankone}[2]{|{#1}\rangle \langle{#2}|}
\newcommand{\rep}{\operatorname{Rep}}
\newcommand{\Hilb}{\operatorname{Hilb}}

\begin{document}
\title{Quantum relations in the general setting: composition and adjacency operators}
\author{Matthew Daws}
%\date{February 2026}
\maketitle

\begin{abstract}
Quantum relations in the sense of Weaver are $M'$-bimodules, for a von Neumann algebra $M$, these generalising actual relations on a set $X$ when $M=\ell^\infty(X)$.  Similarly, relations between two sets can be generalised as bimodules over the commutants of two algebras.  We make an explicit study of this idea, developing some tools to check that constructions are well-defined.  Motivation comes from Kornell's concept of a Quantum Set (for algebras which are sums of matrix algebras), and we find that $*$-homomorphisms correspond to certain quantum relations, extending unpublished work of Kornell.  We find a functor from completely positive maps to quantum relations, related to the idea of taking a noisy communication channel and reducing it to its underlying ``relation''.  As with Quantum Graphs, at least in finite-dimensions, quantum relations correspond to ``adjacency operators'', certain CP maps depending on a choices of faithful functional on the algebras.  We develop some tools to deal with the non-Schur-idempotent case, and show links with our functor from CP maps, and work of Verdon.  We explicitly compute the adjacency operator of a $*$-homomorphism.
\end{abstract}

\section{Introduction}

\emph{Noncommutative}, or \emph{Quantum}, \emph{Graphs} are a noncommutative formalisation of a finite, simple (possibly with loops) graph, in the language of operator algebras (and so that over a finite-dimensional, commutative algebra, we exactly recover the usual notion of a finite graph).  They arose in parallel developments from Quantum Information Theory, \cite{duan2009superactivationzeroerrorcapacitynoisy, DSW_ZeroError} for example, from Weaver's work \cite{Weaver_QuantumRelations, Weaver_QuantumGraphs} which is more directly in a ``non-commutative'' framework, and from category-theoretic quantum mechanics, \cite{MRV_Compositional, Verdon_CovQuantumCombs} for example.  Such approaches are essentially equivalent, see for example our survey \cite[Section~5]{daws_quantum_graphs}.

In Weaver's formulation, quantum graphs are special types of \emph{quantum relations}, although always over a single von Neumann algebra $M$.  It is not hard to imagine how to adapt Weaver's definition to a quantum relation between two von Neumann algebras, and indeed, this is implicit in the literature, e.g.\@ the unpublished \cite{kornell2015quantumfunctions}.  In this paper, we make a more explicit study of this notion, making links with existing notions for finite-dimensional von Neumann algebras, \cite{Kornell_QuantumSets, kornell2026quantumgraphshomomorphisms, Verdon_CovQuantumCombs}, and showing how the different ``pictures'' of quantum graphs have quantum relation versions.

A quantum relation over a von Neumann algebra $M \subseteq\mc B(H)$ is a weak$^*$-closed, $M'$-bimodule $V\subseteq\mc B(H)$, \cite[Section~2]{Weaver_QuantumRelations}.  By ``bimodule'' we mean simply that $x\in V, a,b\in M' \implies axb\in V$; this should not be confused with Hilbert $C^*$-modules, which make a natural appearance later.  That we have a bimodule over the commutant, and that $V$ seems to depend upon the Hilbert space $H$ which $M$ acts on, are linked in the sense that if $M\subseteq\mc B(H_1)$, then quantum relations inside $\mc B(H)$, and inside $\mc B(H_1)$, are in an order-preserving bijection, \cite[Theorem~2.7]{Weaver_QuantumGraphs}.  So there is a no real dependence on $H$, but one must be careful when making definitions that the definition is well-defined (see the discussion at the end of Section~\ref{sec:qrs}): this is an important concern of ours in this paper.

To formulate the notion of a quantum relation between two algebras $M$ and $N$, one could either look at relations over $M\oplus N$ which are supported in a ``corner'', Definition~\ref{defn:qrel_2}, or more directly look at $N'$-$M'$-bimodules inside $\mc B(H,K)$, where $M\subseteq\mc B(H), N\subseteq\mc B(K)$, Definition~\ref{defn:qrel_1}.  It is not hard to see that these are equivalent, but it seems less clear to us that we \emph{immediately} get the desired ``invariance'' from the representing Hilbert spaces $H,K$.  We check the necessary details in Section~\ref{sec:qrs}, and develop some results to check ``well-defined-ness'' in general.  Kornell, in \cite{Kornell_QuantumSets} and later work, looked at \emph{quantum sets}, which can be realised as von Neumann algebras of (possibly infinite) direct-sums of matrix algebras, and this was our initial motivation: we start Section~\ref{sec:qrs} by motivating the general case from the matrix case.

A central result of \cite{Kornell_QuantumSets} is to show that certain quantum relations (the \emph{coinjective} ones) correspond to $*$-homomorphisms between the associated von Neumann algebras of matrices.  We were interested in giving a purely von Neumann algebraic proof of this; in fact, this was already done in the unpublished \cite{kornell2015quantumfunctions}, where a very brief indication is given as to how Hilbert C$^*$-module theory gives the result.  We think it useful to give slightly more detail, which we do in Section~\ref{sec:quantum_functions}, and then to develop the necessary theory to fully characterise e.g.\@ when a $*$-homomorphism is unital, or injective, in terms of the quantum relation, Theorem~\ref{thm:funcs_to_HMs}.  This shows that the category of quantum relations contains the category of $*$-homomorphisms, for all von Neumann algebras.

Guided by quantum graph theory, one might now consider if there was a notion related to ``quantum confusability graphs'' or ``quantum adjacency operators''.  These turn out to be intimately related, a fact perhaps foreseen by Verdon in \cite{Verdon_CovQuantumCombs} (here Verdon works with a diagrammatic calculus, and $2$-categories, so the meeting point with this paper is the finite-dimensional situation; see Section~\ref{sec:CP_to_QR_again} for a fuller account).  In Section~\ref{sec:channels_to_relations} we generalise the notion of taking the ``support'' of a communication channel to realise a relation (Example~\ref{eg:classical_channel}) to give a procedure which associates a UCP map $\theta$ with a quantum relation $V^\theta$, in fact, a functor, Proposition~\ref{prop:cp_to_qr_composition}.  As with quantum graphs, \cite{Weaver_QuantumGraphs}, there is a notion of a ``pullback'' of a relation along a CP map, and this can be nicely expressed in the category of relations, Section~\ref{sec:pullbacks}.  The contemporaneously released preprint \cite{kornell2026quantumgraphshomomorphisms} explores some equivalent ideas for Quantum Sets in Kornell's sense.

In Section~\ref{sec:adj_ops}, we turn to adjacency operators, which for quantum graphs are a generalisation of the adjacency matrix of a graph.  Here again, the natural extension to operators between two algebras can be made.  We work here with the conventions of \cite{daws2025quantumgraphsinfinitedimensionshilbertschmidts, Wasilewski_Quantum_Cayley} and look at CP (or ``real'') adjacency operators.  For quantum graphs, one only looks at Schur idempotent adjacency operators (or equivalently, projections in the algebra $M\vnten M^\op$).  However, here it is important to consider more general operators, and we develop some theory in this direction.  This now makes links with \cite{Verdon_CovQuantumCombs} more clear, and we show that the functor $\theta \mapsto V^\theta$ from Section~\ref{sec:channels_to_relations}, and the association between adjacency operators $A$ and quantum relations $V$, is almost the same, Proposition~\ref{prop:A_vs_theta}.

We are now in a position to give an account of Verdon's construction \cite[Proposition~3.12]{Verdon_CovQuantumCombs} showing that all (finite-dimensional) quantum graphs arise as confusability graphs of UCP maps (generalising the matrix case, \cite{duan2009superactivationzeroerrorcapacitynoisy}).  We do this in a purely operator-algebraic language, Proposition~\ref{prop:Verdon_All_QG_from_UCP}, and then give a new result characterising the symmetric quantum relations $S$ which arise from CP maps (the UCP map case corresponding to the case when $1\in S$), Theorem~\ref{thm:QG_from_CP}; we use here calculation techniques developed previously in the paper.

Composition of quantum relations manifests for adjacency operators as composition (for the usual product) which in general does not preserve Schur idempotency, hence motivating our earlier work on non-idempotent operators.  We briefly consider how the natural order on relations is reflected at the adjacency level (this seems complicated), and then show how to explicitly compute the adjacency operator of a quantum function (that is, $V^\theta$ arising from a $*$-homomorphism $\theta$).  In the non-tracial case, this is surprisingly intricate.  At a number of points, we give examples to illustrate open problems: these are mostly questions around which relations $V=V^\theta$ arise from certain classes of CP map $\theta$.

\subsection{Notation}\label{sec:notation}

We work with standard notation for von Neumann algebras, following \cite{TakesakiII} for example.  Our inner-products are linear in the right variable.  Given an algebra $M$, let $M^\op$ be the opposite algebra to $M$, say with a typical element $x^\op$ for $x\in M$, and multiplication $x^\op y^\op = (yx)^\op$.  We use bra-ket notation in a limited way: for $\xi\in H$ denote $\langle \xi |$ the linear map $H\to\mathbb C; \eta \mapsto (\xi|\eta)$, and by $|\xi\rangle$ the adjoint, the linear map $\mathbb C\to H; \lambda\mapsto \lambda\xi$.  Hence any rank-one operator on $H$ is of the form $\rankone{\xi}{\eta}$.  Given a Hilbert space $H$, let $\overline{H}$ be the conjugate Hilbert space.  By Riesz–Fréchet, for any $\mu\in H^*$, the Banach space dual, there is $\xi\in H$ such that $\mu = \langle\xi|$, and then the induced map $\mu \mapsto \overline\xi$ gives an isometric isomorphism $H^* \to \overline H$.  Denote by $HS(H)$ the Hilbert--Schmidt operators on $H$, which is a Hilbert space isomorphic to $H\otimes\overline H$ for the map $\rankone{\xi}{\eta} \mapsto \xi \otimes \overline\eta$; we often make this identification without further comment.  See Section~\ref{sec:qrs} for the notion of a ``transpose'' of an operator $T^\top \in\mc B(\overline H)$ for $T\in\mc B(H)$.

\subsection{Acknowledgements}

I thank Andre Kornell, Dominic Verdon and Makoto Yamashita for useful conversations related to aspects of this work.

\section{Quantum sets and general quantum relations}\label{sec:qrs}

We give a quick summary of the notion of a \emph{quantum set} from \cite{Kornell_QuantumSets}.  Following \cite[Section~2]{Kornell_QuantumSets} a quantum set $\mc X$ is a family of finite-dimensional Hilbert spaces indexed by a set $\at(\mc X)$, the \emph{atoms} of $\mc X$.  Write $X \propto \mc X$ to indicate that $X$ is an atom of $\mc X$.  In a more ``non-commutative geometry'' viewpoint, we alternatively identify $\mc X$ with the von Neumann algebra
\[ L^\infty(\mc X) = \ell^\infty\text{-}\bigoplus_{X\propto \mc X} \mc B(X) \cong
\ell^\infty \text{-} \bigoplus_{X\propto\mc X} \mathbb M_{\dim(X)}, \]
the von Neumann algebra of finite-dimensional matrix algebras.%  Similarly one can define $C_0(\mc X)$.

Following \cite[Section~3]{Kornell_QuantumSets}, a \emph{binary relation} $R$ from a quantum set $\mc X$ to a quantum set $\mc Y$ is an assignment of a subspace $R(X,Y) \subseteq \mc B(X,Y)$ for each $X\propto\mc X, Y \propto\mc Y$.  Write $\rel(\mc X; \mc Y)$ for the set of (binary) relations from $\mc X$ to $\mc Y$.
Composition of two relations is defined by setting
\[ (S\circ R)(X,Z) = \lin \{ sr : \exists\, Y\propto\mc Y, s\in S(Y,Z), r\in R(X,Y) \}. \]
The identity relation is $1_{\mc X}$ with components $1_{\mc X}(X,Y) = \mathbb C 1_X$ when $X=Y$, and $\{0\}$ otherwise.  There is a natural order: $R \leq S$ when $R(X,Y) \subseteq S(X,Y)$ for all $X,Y$.

There are two adjoint-type operations.  This paper uses functional-analysis language, and so we follow the notational norms here, which unfortunately clash with \cite{Kornell_QuantumSets}.  Denote by $X^*$ the Banach space dual of $X$, which we can identify with the conjugate Hilbert space $\overline X$, Section~\ref{sec:notation}.  As usual, for $T\colon X\to Y$ let $T^*$ be the Hilbert space adjoint, and denote by $T^t$ the Banach space adjoint defined by $T^t(f) = f\circ T$ for $f\in Y^* = \mc B(Y,\mathbb C)$.  Identify $f\in Y^*$ with $\overline {y_0}$, so $f(y) = (y_0|y)$ for $y\in Y$.  Then $(T^t(f))(x) = f(T(x)) = (y_0|T(x)) = (T^*(y_0)|x)$ for $x\in X$, and so $T^t(f)$ is identify with $\overline{T^*(y_0)} = T^\top\overline{y_0}$.  Hence $T^t$ is identified with $T^\top \colon \overline Y \to \overline X$.  

Given a relation $R$ from $\mc X$ to $\mc Y$, define $R^*$ a relation from $\mc Y$ to $\mc X$ by $R^*(Y,X) = \{ r^* : r\in R(X,Y) \}$.  Define $\overline{\mc X}$ to be $\{ \overline X : X\propto \mc X \}$, and then define $\overline R$ to be the relation from $\overline{\mc Y}$ to $\overline{\mc X}$ given by $\overline R(\overline Y, \overline X) = \{ r^\top : r\in R(X,Y) \}$.  This could also be defined using the Banach space dual, given the remarks before.

We hence obtain a (dagger compact) category $\mathsf{qRel}$, see \cite[Theorem~3.6]{Kornell_QuantumSets}.

We wish to re-cast (some elements of) this category is a more abstract, functional analytic language, in particular in a ``coordinate free'' way which only uses the algebra $L^\infty(\mc X)$.  The obvious way to do this is to adapt Weaver's notion of a quantum relation to the case of two algebras.  There are hints of this idea in the literature, see work of Kornell for example, but we do not know a place where some key details are explicitly worked out, so we proceed to do this here.  Adapting \cite[Definition~2.1]{Weaver_QuantumRelations}, we make the following definition.

\begin{definition}\label{defn:qrel_1}
Let $M\subseteq\mc B(H), N\subseteq\mc B(K)$ be von Neumann algebras.  A \emph{quantum relation} from $M$ to $N$ is a weak$^*$-closed subspace $V \subseteq \mc B(H,K)$ which is a $N'$-$M'$-bimodule: if $x\in V, a\in N'\subseteq\mc B(K), b\in M'\subseteq\mc B(H)$ then $axb\in V$.
\end{definition}

For a quantum set $\mc X$ define $L^2(\mc X) = \bigoplus \{ HS(X) : X\propto\mc X \}$, the Hilbert space direct sum.  Also define $\ell^2(\mc X) = \bigoplus \{ X : X\propto\mc X\}$, again the Hilbert space direct sum.  For $X\propto\mc X$, as $\mc B(X)$ acts naturally on $X$ and on $HS(X)$, we see that $L^\infty(\mc X)$ can be regarded as a von Neumann algebra in $\mc B(\ell^2(\mc X))$ and $\mc B(L^2(\mc X))$.  The space $L^2(\mc X)$ is the GNS space of any faithful positive functional on $L^\infty(\mc X)$, so we regard it as more canonical than $\ell^2(\mc X)$.

We regard operators on $\ell^2(\mc X)$ as (infinite) matrices $(T_{X,Y})$ where $T_{X,Y} \in\mc B(Y,X)$ for $X,Y\propto \mc X$.  Then $L^\infty(\mc X)$ consists of the diagonal matrices (and such a matrix defines a bounded operator exactly when $\sup_X \|T_{X,X}\| <  \infty$).  For $X\in\mc X$ let $1_X$ denote the minimal central projection in $L^\infty(\mc X)$ which is the identity on the $X$ factor, and $0$ elsewhere.  Let $T=(T_{X,Y}) \in L^\infty(\mc X)'$, so $1_X T = T1_X$, for each $X$, and so $T$ is seen to be diagonal.  Then $T_{X,X}$ commutes with every member of $\mc B(X)$, and so $T_{X,X} = t_X 1_X$ for some scalar $t_X$.  We conclude that $L^\infty(\mc X)' \cong \ell^\infty(\at(\mc X))$.

\begin{lemma}
Let $\mc X,\mc Y$ be quantum sets and regard $L^\infty(\mc X) \subseteq \mc B(\ell^2(\mc X))$ and the same for $\mc Y$.  A quantum relation from $L^\infty(\mc X)$ to $L^\infty(\mc Y)$ is of the form
\begin{equation}
V = \lin \{ V_{X,Y} \}\clos^{w^*} \label{eq:form_V}
\end{equation}
where for each $X\in\mc X, Y\in\mc Y$ we have that $V_{X,Y} \subseteq \mc B(X,Y)$ is a subspace, and we regard $V_{X,Y} \subseteq \mc B(\ell^2(\mc X), \ell^2(\mc Y))$ in the obvious way.  Any such collection of subspaces gives rise to a quantum relation in this way.
\end{lemma}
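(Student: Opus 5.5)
The plan rests on the computation just made, that $L^\infty(\mc X)' \cong \ell^\infty(\at(\mc X))$. This commutant is the von Neumann algebra of diagonal operators on $\ell^2(\mc X)=\bigoplus_X X$ that act as a scalar $t_X1_X$ on each summand $X$. It is therefore generated, as a von Neumann algebra, by the projections $p_X$ onto the summands $X\subseteq\ell^2(\mc X)$; these are exactly the minimal central projections $1_X$. Write $q_Y$ for the analogous projections onto $Y\subseteq\ell^2(\mc Y)$. The bimodule condition should then reduce to invariance under compressions by these projections.

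\emph{Necessity.} Let $V\subseteq\mc B(\ell^2(\mc X),\ell^2(\mc Y))$ be a quantum relation. Since $q_Y\in L^\infty(\mc Y)'$ and $p_X\in L^\infty(\mc X)'$, for each pair $X,Y$ the operator $q_Y T p_X$ lies in $V$ whenever $T\in V$. Such an operator is supported in the $(Y,X)$ block, so it is naturally an element of $\mc B(X,Y)$. Set
\[ V_{X,Y}=\{q_YTp_X : T\in V\} = q_YVp_X \subseteq V. \]
This is a subspace of $\mc B(X,Y)$, embedded in the obvious way, and it is contained in $V$. So the right-hand side of \eqref{eq:form_V} is contained in $V$, because $V$ is a weak$^*$-closed subspace.

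For the reverse inclusion, take $T\in V$. Let $F$ range over the finite subsets of $\at(\mc X)$, $G$ over the finite subsets of $\at(\mc Y)$, and put $p_F=\sum_{X\in F}p_X$, $q_G=\sum_{Y\in G}q_Y$. Then $p_F\to1$ and $q_G\to1$ strongly, so the bounded net
\[ q_GTp_F=\sum_{X\in F,\,Y\in G}q_YTp_X \]
converges to $T$ in the weak operator topology. A bounded net converging in the WOT also converges weak$^*$, since the two topologies agree on bounded sets. Each $q_GTp_F$ lies in $\lin\{V_{X,Y}\}$, so $T$ lies in the weak$^*$-closure, and this proves \eqref{eq:form_V}.

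\emph{Sufficiency.} Let arbitrary subspaces $V_{X,Y}\subseteq\mc B(X,Y)$ be given, and let $V$ be defined by \eqref{eq:form_V}. By definition $V$ is a weak$^*$-closed subspace. Take $a=(s_Y)\in L^\infty(\mc Y)'$, $b=(t_X)\in L^\infty(\mc X)'$, and $r\in V_{X,Y}$. Then $arb=s_Yt_Xr\in V_{X,Y}$. So $\lin\{V_{X,Y}\}$ is a bimodule over the commutants. Multiplication $T\mapsto aTb$ is weak$^*$-continuous, so the weak$^*$-closure is also a bimodule. Hence $V$ is a quantum relation.

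The only delicate point I expect is in the necessity direction: justifying that $T$ is a weak$^*$-limit of its finite block truncations, when the index sets may be infinite. This rests on the standard fact that WOT and weak$^*$ topologies coincide on bounded sets, applied to the bounded net $q_GTp_F$. The rest is routine bookkeeping. I would also remark that $V_{X,Y}$ is recovered from $V$ as $q_YVp_X$, so the correspondence between such families of subspaces and quantum relations is in fact a bijection.
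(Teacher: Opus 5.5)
Your proposal is correct and follows the paper's proof essentially step for step: you compress by the minimal central projections to get $V_{X,Y}=1_YV1_X\subseteq V$, recover $V$ as the weak$^*$-limit of its finite block truncations, and observe that the converse is immediate from the bimodule property and weak$^*$-closedness. Your justification of the truncation step (strong convergence of $p_F,q_G$, a bounded net, and agreement of the WOT and weak$^*$ topologies on bounded sets) just spells out what the paper asserts in one line.
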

\begin{proof}
We have just observed that $L^\infty(\mc X)' \cong \ell^\infty(\at(\mc X))$ and similarly for $\mc Y$.  Thus a quantum relation from $L^\infty(\mc X)$ to $L^\infty(\mc Y)$ is a weak$^*$-closed subspace $V$ which is a $\ell^\infty(\at(\mc Y))$-$\ell^\infty(\at(\mc X))$-bimodule.  Given such a $V$, let $V_{X,Y} = 1_Y V 1_X \subseteq \mc B(X,Y) \subseteq \mc B(\ell^2(\mc X), \ell^2(\mc Y))$.  Then $V_{X,Y} \subseteq V$ be the bimodule property, and so the weak$^*$-closed linear span of the $V_{X,Y}$ is also a subspace of $V$.  Let $\at(\mc X)_{00}$ be the finite subsets of $\at(\mc X)$, ordered by inclusion to form a direct set.  For $\alpha \in \at(\mc X)_{00}$ let $1_\alpha = \sum_{X\in\alpha} 1_X$, and observe that $1_\alpha \to 1$ $\sigma$-weakly in $L^\infty(\mc X)$.  Thus for any $x\in V$ we have that $1_\alpha x 1_\beta$ approximates $x$ weak$^*$, and clearly $1_\alpha x 1_\beta \in \lin \{ V_{X,Y} \}$.  Thus we have equality as in \eqref{eq:form_V}.  Conversely, if $V$ is of the form \eqref{eq:form_V} then $V$ is weak$^*$-closed, and is a bimodule, so a quantum relation.
\end{proof}

\begin{corollary}\label{corr:relations_relations}
Let $\mc X,\mc Y$ be quantum sets.  There is a canonical bijection between $\rel(\mc X; \mc Y)$ and (Weaver) quantum relations from $L^\infty(\mc X) \subseteq \mc B(\ell^2(\mc X))$ to $L^\infty(\mc Y) \subseteq \mc B(\ell^2(\mc Y))$.
\end{corollary}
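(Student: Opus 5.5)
The plan is to define the two maps explicitly and check that they are mutually inverse, using the preceding Lemma. Given $R\in\rel(\mc X;\mc Y)$, set
\[ \Phi(R) = \lin\{ R(X,Y) : X\propto\mc X, Y\propto\mc Y\}\clos^{w^*} \subseteq \mc B(\ell^2(\mc X),\ell^2(\mc Y)), \]
where each $R(X,Y)\subseteq\mc B(X,Y)$ is regarded as a space of operators supported in the $(Y,X)$ block. By the converse part of the Lemma, $\Phi(R)$ is a quantum relation. In the other direction, given a quantum relation $V$, set $\Psi(V)(X,Y) = 1_Y V 1_X$, which is a subspace of $\mc B(X,Y)$ and so a relation in Kornell's sense.

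First, $\Phi\circ\Psi = \id$. This is exactly the content of the proof of the Lemma: with $V_{X,Y}=1_YV1_X$ we showed that $V$ is the weak$^*$-closed span of these blocks.

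Second, $\Psi\circ\Phi=\id$, that is, $1_Y\Phi(R)1_X = R(X,Y)$. This is the main point, because we must make sure that taking the weak$^*$-closure does not enlarge the blocks.
\begin{itemize}
\item[(i)] The map $T\mapsto 1_YT1_X$ is weak$^*$-continuous.
\item[(ii)] This map sends $R(X',Y')$ to $0$ unless $X'=X$ and $Y'=Y$, and it is the identity on $R(X,Y)$.
\item[(iii)] By (i) and (ii), it maps the weak$^*$-closed span into the weak$^*$-closure of $R(X,Y)$.
\item[(iv)] Since $\mc B(X,Y)$ is finite-dimensional, $R(X,Y)$ is already weak$^*$-closed, so this closure is $R(X,Y)$.
\end{itemize}
This gives $1_Y\Phi(R)1_X\subseteq R(X,Y)$. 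The reverse inclusion holds because $R(X,Y)\subseteq\Phi(R)$ and compressing by $1_Y,1_X$ fixes it.

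Finally, I would note that the bijection is canonical, since no choices are involved, and that it is order-preserving in both directions, as both $\Phi$ and $\Psi$ are clearly monotone. I expect no real obstacle. The only delicate step is the finite-dimensionality used in (iv) to avoid closure issues at the level of the blocks.
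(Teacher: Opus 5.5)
Your proposal is correct and matches the paper. The paper treats the corollary as immediate from the preceding Lemma, using $V\mapsto(1_YV1_X)_{X,Y}$ and, in the other direction, the weak$^*$-closed span. Your explicit check that compressing the weak$^*$-closed span by $T\mapsto 1_YT1_X$ recovers exactly $R(X,Y)$, via weak$^*$-continuity and finite-dimensionality of $\mc B(X,Y)$, supplies the injectivity step that the paper leaves implicit.
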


Notice that we write $V_{X,Y} \subseteq \mc B(X,Y)$ to be consistent with the notation for the component $R(X,Y)$ of a relation $R$, but that this is inconsistent with the notation for a(n operator-valued) matrix.

Let us see what the category structure corresponds to:
\begin{itemize}
    \item The identity $1_{\mc X}$ is naturally identified with $\ell^\infty(\at(\mc X))$, and so corresponds to the relation $L^\infty(\mc X)'$.
    \item Composition is simply operator composition: $V\circ W = \lin\{ v\circ w :v\in V, w\in W\}\clos^{w^*}$.
    \item Order is simply inclusion.
    \item The adjoint corresponds to $V^* = \{ v^* : v\in V \}$.
    \item As $\mc B(\overline X) \cong \mc B(X)^\top$ is an anti-homomorphism, we see that $L^\infty(\overline{\mc X})$ can be canonically identified with $L^\infty(\mc X)^\op$, the opposite algebra.  Then $R^\top$ corresponds to the operation $V^\top = \{ v^\top : v\in V \}$.
\end{itemize}

We next look at how we can regard these generalised relations as a special type of relation in Weaver's sense.  Given two von Neumann algebras $M,N$ consider the direct sum $M\oplus N$.  If $M \subseteq\mc B(H), N\subseteq\mc B(K)$ then $M\oplus N \subseteq \mc B(H\oplus K)$ as ``diagonal matrices'', where we view $\mc B(H\oplus K)$ as an algebra of $2\times 2$ matrices in the usual way.  Notice then that $(M\oplus N)' = M' \oplus N'$.  Then $\mc B(H,K)$ can be identified with the bottom-left corner in this matrix picture, which in turn is identified with $1_K \mc B(H\oplus K) 1_H$.  As $1_H = 1_M \in M \cap M'$ and similarly for $N$, we have motivated the following, which it is now easy to see is equivalent to Definition~\ref{defn:qrel_1}.

\begin{definition}\label{defn:qrel_2}
A quantum relation from $M$ to $N$ is a quantum relation on $M\oplus N$, say $V$, with $1_N V 1_M = V$.
\end{definition}

Notice that we stated Corollary~\ref{corr:relations_relations} for $L^\infty(\mc X)$ acting on $\ell^2(\mc X)$.  What about other choices, for example, $L^2(\mc X)$?  For a single algebra, Weaver shows in \cite[Theorem~2.7]{Weaver_QuantumRelations} that quantum relations are essentially independent of the choice of Hilbert space our algebra acts on.  It is not, to us, immediately clear that this will also hold for relations over two algebras, so we proceed to check the details.  The following constructions for $*$-homomorphisms will also be used extensively in much of the rest of the paper.

Rather than follow Weaver's argument directly (but see Proposition~\ref{prop:Weaver_corr_is_ours}), we follow our treatment, compare \cite[Lemmas~7.3, 7.4]{daws_quantum_graphs}.  Fix $M\subseteq\mc B(H), N\subseteq\mc B(K)$, and let $V\subseteq\mc B(H\oplus K)$ be a quantum relation on $M\oplus N$.  Given a Hilbert space $L$, define the normal unital injective $*$-homomorphism $\pi \colon M\oplus N \to \mc B((H\oplus K)\otimes L); x\mapsto x\otimes 1_L$, and let $V_\pi = V \vnten \mc B(L) = \{ v\otimes x : v\in V, x\in\mc B(L) \}\clos^{w^*}$.  Then $V \mapsto V_\pi$ is an order-preserving bijection between quantum relations on $M\oplus N$ and quantum relations on $\pi(M\oplus N)$, \cite[Lemma~7.3]{daws_quantum_graphs}.

\begin{lemma}
The map $V\mapsto V_\pi$ preserves the property that $V = 1_N V 1_M$.
\end{lemma}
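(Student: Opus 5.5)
We need to show that $V = 1_N V 1_M$ if and only if $V_\pi = \pi(1_N) V_\pi \pi(1_M)$. Here $\pi(1_M) = 1_H\otimes 1_L$ is the projection of $(H\oplus K)\otimes L$ onto $H\otimes L$, and similarly for $N$. The statement says ``preserves'', but since $V\mapsto V_\pi$ is a bijection onto quantum relations on $\pi(M\oplus N)$, the natural thing to prove is both directions. That way the map restricts to a bijection between quantum relations from $M$ to $N$ in the sense of Definition~\ref{defn:qrel_2} for the two representations.

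\emph{Forward direction.} Suppose $V = 1_N V 1_M$. For $v\in V$ and $x\in\mc B(L)$ we have
\[ (1_N\otimes 1_L)(v\otimes x)(1_M\otimes 1_L) = 1_Nv1_M\otimes x = v\otimes x. \]
Hence the map $T\mapsto \pi(1_N)T\pi(1_M)$ fixes every elementary tensor spanning $V_\pi$. This map is linear and weak$^*$-continuous, so it fixes the weak$^*$-closed span, which is all of $V_\pi$. Thus $V_\pi = \pi(1_N)V_\pi\pi(1_M)$. (The inclusion $\supseteq$ is automatic anyway: $\pi(1_M)$ and $\pi(1_N)$ lie in $\pi(M\oplus N)'$, since they are central in $\pi(M\oplus N)$, and $V_\pi$ is a bimodule over that commutant.)

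\emph{Converse.} The cleanest route is through the bijection. Let $W = 1_N V 1_M$. Since $1_M$ and $1_N$ lie in $(M\oplus N)' = M'\oplus N'$, $W$ is again a weak$^*$-closed $(M\oplus N)'$-bimodule, hence a quantum relation, and $W\subseteq V$. The same computation as above shows that
\[ W_\pi = \pi(1_N) V_\pi \pi(1_M). \]
For the inclusion $\subseteq$, apply the compression to the generators of $V_\pi$. For $\supseteq$, note that compression by $\pi(1_N)$ and $\pi(1_M)$ is weak$^*$-continuous, so it maps $V_\pi$ into the weak$^*$-closure of the span of the elements $1_Nv1_M\otimes x$, which is $W_\pi$. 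Now if $V_\pi = \pi(1_N)V_\pi\pi(1_M)$, then $W_\pi = V_\pi$, and injectivity of $V\mapsto V_\pi$ (\cite[Lemma~7.3]{daws_quantum_graphs}) gives $W = V$.

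The only mild obstacle is the identity $W_\pi = \pi(1_N)V_\pi\pi(1_M)$, and specifically the care needed with weak$^*$-closures there. It is handled by the weak$^*$-continuity of compression together with the density of the algebraic tensor product.
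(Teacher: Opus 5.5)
Your proof is correct and follows the paper's argument. The forward direction is the same compression-of-generators argument, and for the converse you establish the same identity $\pi(1_N)V_\pi\pi(1_M) = (1_NV1_M)\vnten\mc B(L)$. The only difference is the last step. You invoke injectivity of $V\mapsto V_\pi$ from \cite[Lemma~7.3]{daws_quantum_graphs}, which is legitimate once you note that $1_NV1_M = V\cap\{T : 1_NT1_M = T\}$ is weak$^*$-closed. The paper instead reproves that injectivity in this instance, using a Hahn--Banach separation and slicing with $\omega\otimes\id$.
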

\begin{proof}
As $V$ is a bimodule over $M'\oplus N'$, that $V = 1_N V 1_M$ is equivalent to $V \subseteq 1_N V 1_M$, as the other inclusion follows from the bimodule property.  Clearly $V \subseteq 1_N V 1_M$ implies $V_\pi \subseteq 1_N V_\pi 1_M$.

For the converse, first note that the map $x \mapsto 1_N x 1_M$ is idempotent, weak$^*$-continuous, and restricts to a map on $V\otimes \mc B(L)$, the algebraic tensor product.  It follows that $1_N V_\pi 1_M = 1_N V 1_M \vnten \mc B(L)$, compare for example \cite[Lemma~5.5]{Daws_OneParam}.  Suppose that $V_\pi = 1_N V_\pi 1_M$ but that, towards a contradiction, $1_N V 1_M$ is a proper subspace of $V$.  By Hahn-Banach, there is $\omega\in\mc B(H\oplus K)_*$ and $v\in V$ with $\ip{v}{\omega}=1$ but $\ip{1_N u 1_M}{\omega}=0$ for each $u\in V$.  Then $(\omega\otimes\id)(1_N V 1_M \otimes \mc B(L)) = \{0\}$ and so $(\omega\otimes\id)(1_N V_\pi 1_M) = \{0\}$ by weak$^*$-continuity, and the remark before.  However, then $\{0\} = (\omega\otimes\id)(V_\pi) = \omega(V) \mc B(L)$ which contradicts $\ip{v}{\omega}=1$.
\end{proof}

Now let $\theta \colon M\oplus N \to \mc B(L)$ be a normal unital $*$-homomorphism, so using the structure theory for such maps, compare before \cite[Lemma~7.4]{daws_quantum_graphs} for example, there is $L_0$ and an isometry $u \colon L \to (H\oplus K)\otimes L_0$ with $u \theta(x) = \pi(x) u$ for $x\in M\oplus N$, with $\pi$ as before, formed using $L_0$.  Set $V_\theta = u^* V_\pi u$.  Then $V \mapsto V_\theta$ is order preserving and maps quantum relations on $M\oplus N$ to quantum relations on $\theta(M\oplus N)$, \cite[Lemma~7.4]{daws_quantum_graphs}.

\begin{lemma}\label{lem:HM_mapping_preserves_qr}
The map $V \mapsto V_\theta$ preserves the property that $V = 1_N V 1_M$.
\end{lemma}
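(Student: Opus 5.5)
Here "the property" is read as follows. Write $p_M=\theta(1_M)$ and $p_N=\theta(1_N)$. These are the central projections of $\theta(M\oplus N)$ that play the roles of $1_M,1_N$ in the image. The claim is then: if $V=1_NV1_M$, then $V_\theta=p_NV_\theta p_M$. The plan is to push everything through the intertwining isometry $u$, using the previous lemma for the amplification step $V\mapsto V_\pi$.

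The first step is to pass from $V$ to $V_\pi$. Since $V=1_NV1_M$, the previous lemma gives $V_\pi=\pi(1_N)V_\pi\pi(1_M)$, where $\pi(1_M)=1_M\otimes 1_{L_0}$ and similarly for $N$.

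The second step is the key computation. Because $u\theta(x)=\pi(x)u$ for all $x$, taking adjoints and using that $x\mapsto\theta(x)$ is a $*$-map gives $\theta(x)u^*=u^*\pi(x)$. Hence for $w\in V_\pi$,
\[ p_N\,u^*wu\,p_M=\theta(1_N)u^*wu\theta(1_M)=u^*\pi(1_N)w\pi(1_M)u. \]
It follows that $p_NV_\theta p_M=u^*\pi(1_N)V_\pi\pi(1_M)u=u^*V_\pi u=V_\theta$. One should also check that $p_N$ and $p_M$ lie in $\theta(M\oplus N)'$. They do: they are central projections in the image, since $1_M$ and $1_N$ are central. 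So the identity $p_NV_\theta p_M=V_\theta$ is really the stated corner property for a relation on $\theta(M\oplus N)\cong\theta(M)\oplus\theta(N)$.

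The main point requiring care is this last identification. The image algebra decomposes as $\theta(M)\oplus\theta(N)$ on $p_ML\oplus p_NL$, possibly with one summand zero if $\theta$ kills a summand. So "$1_N$" for the image has to mean $p_N$, and Definition~\ref{defn:qrel_2} must be applied in that decomposition. I would state this interpretation explicitly. I would also note that no weak$^*$-closure issues arise. The map $w\mapsto u^*wu$ is applied setwise in the definition of $V_\theta$, and compression by $p_N,p_M$ commutes with it exactly as computed above, so the equality holds on the nose.
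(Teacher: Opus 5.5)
Your proposal is correct and follows the paper's proof: you reduce to $V_\pi$ via the previous lemma, then use the intertwining relation $u\theta(1_M)=\pi(1_M)u$ (and its adjoint for $1_N$) to move the corner projections through the compression $w\mapsto u^*wu$. The only cosmetic difference is that you get $\theta(1_N)V_\theta\theta(1_M)=V_\theta$ directly, whereas the paper proves the inclusion $V_\theta\subseteq 1_N V_\theta 1_M$ and gets equality from the bimodule property.
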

\begin{proof}
Let $P = \pi(M\oplus N) = (M\oplus N)\otimes 1 \subseteq \mc B((H\oplus K)\otimes L_0)$.  By the previous lemma, we need only show that $V \mapsto u^*Vu$ preserves the property, for quantum relations $V$ on $P$.  Recall that as $u$ intertwines $\theta$ and $\pi$, we have that $uu^* \in P' = (M'\oplus N')\vnten\mc B(L_0)$.  Also, $u 1_M = u \theta(1_M) = \pi(1_M) u = 1_M u$ and for $1_N$.  So if $V \subseteq 1_N V 1_M$ then $u^* V u \subseteq u^* 1_N V 1_M u = 1_N (u^*Vu) 1_M$.  Again we obtain equality by the bimodule property.
\end{proof}

\begin{corollary}\label{corr:indep_rep}
Quantum relations from $M$ to $N$ are independent of the representation chosen.
\end{corollary}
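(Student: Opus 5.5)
The statement has to be read precisely first. Suppose $M\subseteq\mc B(H_1)$, $M\subseteq\mc B(H_2)$, $N\subseteq\mc B(K_1)$ and $N\subseteq\mc B(K_2)$ are two faithful normal representations of each algebra. We want an order-preserving bijection between quantum relations from $M$ to $N$ in the first picture, inside $\mc B(H_1,K_1)$, and those in the second picture, inside $\mc B(H_2,K_2)$. The natural way to get this is through Definition~\ref{defn:qrel_2}. Quantum relations from $M$ to $N$ in either picture are exactly the quantum relations $V$ on $M\oplus N$, acting on $H_i\oplus K_i$, which satisfy $V=1_NV1_M$. The question therefore reduces to Weaver-style independence for the single algebra $M\oplus N$, together with preservation of the corner condition.

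The plan has three steps.

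\textbf{Step 1.} Define $\theta\colon M\oplus N\to\mc B(H_2\oplus K_2)$ as the identity on $M\oplus N$, viewed as a change of representation from $H_1\oplus K_1$. This $\theta$ is a normal unital injective $*$-homomorphism, and its image is $M\oplus N$ in the second representation. So $V\mapsto V_\theta$ sends quantum relations on $(M\oplus N)\subseteq\mc B(H_1\oplus K_1)$ to quantum relations on $(M\oplus N)\subseteq\mc B(H_2\oplus K_2)$, and it is order preserving, by \cite[Lemma~7.4]{daws_quantum_graphs}.

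\textbf{Step 2.} Apply Lemma~\ref{lem:HM_mapping_preserves_qr}: this map preserves the condition $V=1_NV1_M$. Note that $\theta(1_M)=1_M$ and $\theta(1_N)=1_N$, so the corner projections correspond correctly in the two pictures.

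\textbf{Step 3.} Swap the roles of the two representations to get $\theta^{-1}$, and show that $V\mapsto V_\theta$ and $W\mapsto W_{\theta^{-1}}$ are mutually inverse. That gives the required order-preserving bijection between relations supported in the corner.

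The main obstacle is Step 3. Lemma~7.4 as quoted gives only an order-preserving map, not a bijection. To get inverses I would show that the construction is functorial, $(V_\theta)_{\theta'}=V_{\theta'\theta}$, and that $V_{\id}=V$. Alternatively, I would invoke Weaver's result \cite[Theorem~2.7]{Weaver_QuantumRelations}, which already gives a bijection for the single algebra $M\oplus N$, and check that the map $V\mapsto V_\theta$ agrees with Weaver's correspondence; the paper signals this with Proposition~\ref{prop:Weaver_corr_is_ours}.

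The cleanest route to the bijection uses amplification. Choose $L_0$ large enough that both $H_1\oplus K_1$ and $H_2\oplus K_2$ embed, via intertwining isometries, into a common amplification $(H_j\oplus K_j)\otimes L_0$. On amplifications, $V\mapsto V_\pi$ is a bijection by \cite[Lemma~7.3]{daws_quantum_graphs}. Compression by an isometry $u$ with $uu^*$ in the commutant is inverted by $W\mapsto$ the weak$^*$-closed $P'$-bimodule generated by $uWu^*$. Both of these operations respect the corner condition: for amplification this is the first lemma, and for compression it follows because $u$ commutes with $1_M$ and $1_N$. Given this, the bijection restricts to relations with $V=1_NV1_M$, and Definition~\ref{defn:qrel_2} turns it into the claimed independence for Definition~\ref{defn:qrel_1}.
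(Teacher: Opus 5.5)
Your proposal is correct and is essentially the paper's proof. Both pass through Definition~\ref{defn:qrel_2}, use the change-of-representation homomorphism with its inverse, and apply Lemma~\ref{lem:HM_mapping_preserves_qr} in each direction to transfer the corner condition $V=1_NV1_M$. The paper simply takes the bijectivity of $V\mapsto V_\theta$, with inverse $W\mapsto W_{\theta^{-1}}$, as already known (Weaver's theorem, via \cite{daws_quantum_graphs}), whereas you sketch ways to justify it. In your amplification route, note that compression by $u$ is inverted by the $P'$-bimodule-generation map only because the representation is faithful, which forces $uu^*$ to have central support $1$ in $P'$.
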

\begin{proof}
Let $M\subseteq\mc B(H_1)$ and $\mc B(H_2)$ and similarly for $N$ on $K_1, K_2$.  Let $\theta_{1,2} \colon M\oplus N \to \mc B(H_2\oplus K_2)$ be the resulting $*$-homomorphism, which has inverse $\theta_{2,1} \colon M \oplus N \to \mc B(H_1\oplus K_1)$, where we first consider $M\oplus N$ as acting ambiently on $H_1\oplus K_1$, and then secondly as acting on $H_2\oplus K_2$.  There is an order-preserving bijection between quantum relations $V_1\subseteq \mc B(H_1\oplus K_1)$ on $M\oplus N$, and $V_2\subseteq \mc B(H_2\otimes K_2)$; indeed, $V_2 = (V_1)_{\theta_{1,2}}$ and $V_1 = (V_2)_{\theta_{2,1}}$.  If $V_1 = 1_N V_1 1_M$ then the lemma applied to $\theta_{1,2}$ shows that $V_2 = 1_N V_2 1_M$, while the converse follows by using $\theta_{2,1}$ instead.
\end{proof}

\begin{remark}\label{rem:changing_base_spaces}
For later reference, let us give a more concrete description of $V_\theta$.  Typically we are interested in the case when $\theta_M \colon M \to \mc B(H_1)$ and $\theta_N \colon M \to \mc B(K_1)$ are normal unital $*$-homomorphisms, giving $\theta = \theta_M \oplus \theta_N \colon M \oplus N \to \mc B(L)$ with $L=H_1\oplus K_1$.  Choose isometries $u_M \colon H_1 \to H \otimes L_1$ and $u_N \colon K_1 \to K\otimes L_2$ dilating $\theta_M, \theta_N$ respectively, so with $L_0 = L_1 \oplus L_2$ we get
\begin{align*}
u \colon  &  L = H_1\oplus K_1 \to (H\oplus K) \otimes L_0 \cong (H \otimes L_1) \oplus (H\otimes L_2) \oplus (K \otimes L_1) \oplus (K\otimes L_2), \\
&(\xi,\eta) \mapsto (u_M(\xi), 0, 0, u_N(\eta)).
\end{align*}
Then $u^*(z\otimes 1)u = \theta(z)$ for $z\in M\oplus N$, and so $V_\theta = u^*(V\vnten\mc B(L_0))u$, thought of as a quantum relation over $M\oplus N$.

As a quantum relation from $M$ to $N$, we look at the $\mc B(H_1, K_1)$ corner.
As we take weak$^*$-closures, it is enough to consider a rank-one operator in $\mc B(L_0)$.  Let $\gamma_i=(\alpha_i,\beta_i)\in L_0=L_1\oplus L_2$ for $i=1,2$, and set $t = \rankone{\gamma_1}{\gamma_2}\in\mc B(L_0)$.  For $\xi\in H_1, \eta\in K_1$, and $v\in V\subseteq\mc B(H,K)$, we are hence interested in
\begin{align*}
\big( (0,\eta) \big| u^*(v \otimes t) u (\xi,0) \big)
&= \big( (0,0,0,u_N(\eta)) \big| (v\otimes \rankone{(\alpha_1,\beta_1)}{(\alpha_2,\beta_2)}  (u_M(\xi),0,0,0) \big) \\
&= \big( (1\otimes \langle \beta_1|)u_N(\eta)  \big|  v (1\otimes \langle\alpha_2|)u_M(\xi) \big) \\
&= \big( \eta \big| u_N^* (v\otimes\rankone{\beta_1}{\alpha_2}) u_M(\xi) \big).
\end{align*}
We conclude that $V_\theta = u_N^* (V\vnten\mc B(L_1, L_2)) u_M$.  Here of course $V\vnten\mc B(L_1,L_2)$ is the weak$^*$-closure of $V\otimes \mc B(L_1,L_2)$, which we can think of as a complemented subspace of $\mc B((H\oplus K)\otimes (L_1\oplus L_2))$.
\end{remark}

Let us take the opportunity to complete the discussion we started in \cite{daws_quantum_graphs}.  Weaver's argument in \cite[Theorem~2.7]{Weaver_QuantumRelations} is that when $M \subseteq\mc B(H)$ and $\theta \colon M \to \mc B(K)$ is an injective unital normal $*$-homomorphism, then\footnote{We give an essentially self-contained proof of this claim, and some bibliographic notes, in \url{https://github.com/MatthewDaws/Mathematics/tree/master/Isomorphism-VN}} for some suitably large $L$ we have that $M \otimes 1$ and $\theta(M)\otimes 1$ are spatially isomorphic: there is a unitary $w \colon K \otimes L \to H\otimes L$ with $w^*(x\otimes 1)w = \theta(x)\otimes 1$ for each $x\in M$.  Then quantum relations $V\subseteq \mc B(H)$ over $M$, and $W\subseteq \mc B(K)$ over $\theta(M)$ are related exactly when $w^*(V\vnten\mc B(L))w = W\vnten\mc B(L)$.

\begin{proposition}\label{prop:Weaver_corr_is_ours}
The quantum relation $W\subseteq\mc B(K)$ associated to $V$ is exactly $V_\theta$.
\end{proposition}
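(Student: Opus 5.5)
We will show $w^*(V\vnten\mc B(L))w = V_\theta\vnten\mc B(L)$. Weaver's correspondence says $W$ is the unique relation over $\theta(M)$ satisfying this identity, so this gives $W=V_\theta$. Recall that $V_\theta = u^*(V\vnten\mc B(L_0))u$, where $u\colon K\to H\otimes L_0$ is an isometry with $u\theta(x) = (x\otimes 1)u$. We also know $V_\theta$ is a quantum relation over $\theta(M)$ (by \cite[Lemma~7.4]{daws_quantum_graphs}), and that its definition does not depend on the choice of dilation $u$ or $L_0$, by the same lemma. So we may choose $u$ conveniently. By enlarging $L$ if necessary, which is harmless because both correspondences are stable under amplification (via the bijection $V\mapsto V\vnten\mc B(L)$ of \cite[Lemma~7.3]{daws_quantum_graphs}), we may take $L_0 = L$.

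The concrete route is to build a dilation out of $w$ itself. Fix a unit vector $e\in L$ and define
\[ u \colon K \to H\otimes L, \qquad u = w(1_K\otimes |e\rangle). \]
This is an isometry, and
\[ u\theta(x) = w(\theta(x)\otimes 1)(1\otimes|e\rangle) = (x\otimes 1)w(1\otimes|e\rangle) = (x\otimes 1)u . \]
Hence $V_\theta = u^*(V\vnten\mc B(L))u = (1\otimes\langle e|)\,w^*(V\vnten\mc B(L))w\,(1\otimes|e\rangle)$. Put $X = w^*(V\vnten\mc B(L))w$. This is a quantum relation over $\theta(M)\otimes 1$, because $w$ spatially implements the isomorphism. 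So again by \cite[Lemma~7.3]{daws_quantum_graphs}, $X = W\vnten\mc B(L)$ for a unique relation $W$ over $\theta(M)$; this $W$ is Weaver's relation.

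It remains to compute the slice: $(1\otimes\langle e|)(W\vnten\mc B(L))(1\otimes|e\rangle) = W$. The inclusion $\supseteq$ is clear from elementary tensors $w_0\otimes|e\rangle\langle e|$. The inclusion $\subseteq$ holds because this compression is the weak$^*$-continuous slice map $\id\otimes\omega_e$, which sends $W\otimes\mc B(L)$ into $W$, and $W$ is weak$^*$-closed. Hence $V_\theta = W$, as required.

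The main obstacle is the first step: justifying that $V_\theta$ is independent of the chosen dilation, and that the amplification to $L_0=L$ is legitimate. If independence is not already explicit in \cite[Lemma~7.4]{daws_quantum_graphs}, I would prove it directly. Given two dilations $u_1\colon K\to H\otimes L_1$ and $u_2\colon K\to H\otimes L_2$, the operator $u_2u_1^*$ lies in $(M\otimes1)'$ between the amplified spaces, namely $M'\vnten\mc B(L_1,L_2)$. Using the bimodule property, this gives $u_2^*(V\vnten\mc B(L_2))u_2 = u_2^*u_2u_1^*(\cdots)u_1u_2^*u_2 \subseteq u_1^*(V\vnten\mc B(L_1))u_1$, and the reverse inclusion follows by symmetry.
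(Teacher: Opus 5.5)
Your argument is correct and is essentially the paper's proof: build the dilation $u = w(1\otimes|e\rangle)$ from $w$, invoke independence of $V_\theta$ from the choice of dilation (the uniqueness part of \cite[Lemma~7.4]{daws_quantum_graphs}), and then slice $W\vnten\mc B(L)$ with $\omega_e$ to recover $W$. Two small points: the step about enlarging $L$ is unnecessary, since your $u$ already has $L_0 = L$; and in your backup independence argument the identity should read $u_2^*Xu_2 = u_1^*(u_1u_2^*Xu_2u_1^*)u_1$, as written the indices are swapped.
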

\begin{proof}
We continue with the notation just introduced.  Let $\xi_0\in L$ be a unit vector and define $u \colon K \to H\otimes L$ by $u(\xi) = w(\xi\otimes\xi_0)$ so for $x\in M$ we have $u^*(x\otimes 1)u = (\id\otimes\omega_{\xi_0})(w^*(x\otimes 1)w) = \theta(x)$.  In particular, $u^*u=1$, and so we have a dilation of $\theta$.  By the uniqueness claim of \cite[Lemma~7.4]{daws_quantum_graphs} we have that
\[ V_\theta = u^*(V\vnten\mc B(L))u
= (\id\otimes\omega_{\xi_0})(w^*(V\vnten\mc B(L))w)
= (\id\otimes\omega_{\xi_0})(W\vnten\mc B(L))
= W. \]
For the final equality, notice that $(\id\otimes\omega_{\xi_0})(W\vnten\mc B(L)) \subseteq W$ by weak$^*$-continuity of the slice-map, and that the other inclusion is clear.  Thus $V_\theta = W$ as claimed.
\end{proof}

The map $U\mapsto U_\theta$ developed above does not need that $\theta$ is injective, and so is a mild generalisation of \cite[Theorem~2.7]{Weaver_QuantumRelations} (of course, we do not obtain a bijection in general!)  The following shows that this ``base change'' operation commutes with composition.

\begin{proposition}\label{prop:base_change_to_comp}
For $i=1,2,3$ let $M_i \subseteq \mc B(H_i)$ be a von Neumann algebra, and $\theta_i \colon M_i \to \mc B(K_i)$ a unital normal $*$-homomorphism.  Let $U\subseteq\mc B(H_1, H_2), V \subseteq \mc B(H_2,H_3)$ be quantum relations from $M_1$ to $M_2$, respectively, $M_2$ to $M_3$.  Then $V_{\theta_2\oplus\theta_3} \circ U_{\theta_1\oplus\theta_2} = (V\circ U)_{\theta_1\oplus\theta_3}$.
\end{proposition}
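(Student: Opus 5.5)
We use the concrete description from Remark~\ref{rem:changing_base_spaces}. For $i=1,2,3$ choose an isometry $u_i\colon K_i\to H_i\otimes L_i$ with $u_i^*(x\otimes 1)u_i=\theta_i(x)$ for $x\in M_i$. Then
\[ U_{\theta_1\oplus\theta_2}=u_2^*(U\vnten\mc B(L_1,L_2))u_1, \qquad V_{\theta_2\oplus\theta_3}=u_3^*(V\vnten\mc B(L_2,L_3))u_2, \qquad (V\circ U)_{\theta_1\oplus\theta_3}=u_3^*((V\circ U)\vnten\mc B(L_1,L_3))u_1 . \]
Strictly, the Remark produces one dilation of $\theta_1\oplus\theta_2$ from $u_1,u_2$ and the direct sum $L_1\oplus L_2$. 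By the uniqueness part of \cite[Lemma~7.4]{daws_quantum_graphs}, $V_\theta$ does not depend on the dilation chosen. So we may use the same $u_2$ and $L_2$ in both of the first two expressions, and likewise fix $u_1$ and $u_3$ throughout.

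The composition is then the weak$^*$-closed span of products
\[ u_3^*(v\otimes s)\,u_2u_2^*\,(w\otimes t)u_1 \]
with $v\otimes s\in V\vnten\mc B(L_2,L_3)$ and $w\otimes t\in U\vnten\mc B(L_1,L_2)$ (by separate weak$^*$-continuity of multiplication it suffices to take elementary tensors and then close). The key observation is the one already used in Lemma~\ref{lem:HM_mapping_preserves_qr}: since $u_2$ intertwines $\theta_2$ and $x\mapsto x\otimes 1$, we have $p:=u_2u_2^*\in (M_2\otimes 1)'=M_2'\vnten\mc B(L_2)$.

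For the inclusion $\subseteq$, note that $V\vnten\mc B(L_2,L_3)$ is a right module over $M_2'\vnten\mc B(L_2)$; this follows from the elementary-tensor case by weak$^*$-continuity. Hence $(v\otimes s)p\in V\vnten\mc B(L_2,L_3)$. Then the product $(V\vnten\mc B(L_2,L_3))(U\vnten\mc B(L_1,L_2))$ lies in $(V\circ U)\vnten\mc B(L_1,L_3)$, again checked on elementary tensors and then closed. This gives $V_{\theta_2\oplus\theta_3}\circ U_{\theta_1\oplus\theta_2}\subseteq (V\circ U)_{\theta_1\oplus\theta_3}$.

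The reverse inclusion is the main obstacle, because the projection $p$ sits in the middle and we must show it loses nothing. By weak$^*$-density it suffices to produce $u_3^*(vw\otimes st)u_1$ for $v\in V$, $w\in U$, $s\in\mc B(L_2,L_3)$, $t\in\mc B(L_1,L_2)$. The plan is to show that the central support of $p$ in $M_2'\vnten\mc B(L_2)$ is $1$. Indeed, if $z\otimes 1$ is a central projection with $z\in Z(M_2)$ and $(z\otimes1)u_2=0$, then $\theta_2(z)=u_2^*(z\otimes1)u_2=0$. Two cases arise. If $\theta_2$ is injective this forces $z=0$. If not, we pass to the quotient: replace $M_2$ by $M_2z_0$ with $z_0$ the support of $\theta_2$. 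This is the delicate point, since then we need $V(1-z_0)U$ to be invisible after compressing by $u_3^*$ and $u_1$, and this needs care. I would then write $1=\sum_k a_k p b_k$ weak$^*$ (strictly, as a sum $\sum_k y_k p y_k^*$ with $y_k$ partial isometries in $M_2'\vnten\mc B(L_2)$) and use
\[ (v\otimes s)(w\otimes t)=\sum_k (v\otimes s)y_k\,p\,y_k^*(w\otimes t), \]
where each factor $(v\otimes s)y_k$ and $y_k^*(w\otimes t)$ stays in the appropriate tensor space by the module property. This yields $\supseteq$, and the case where $\theta_2$ is not injective is the step I would treat most carefully.
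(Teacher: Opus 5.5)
\textbf{The gap is in the statement, not your method.} The step you flag as delicate (the non-injective case) cannot be completed, because the proposition is false when $\theta_2$ is not injective.

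Here is a counterexample. Take $M_1=M_3=\mathbb C$ acting on $\mathbb C$, with $\theta_1=\theta_3=\id$. Let $M_2$ be the diagonal subalgebra of $\mathbb M_2$ acting on $\mathbb C^2$, so $M_2'=M_2$. Let $\theta_2(\mathrm{diag}(a,b))=a\in\mc B(\mathbb C)$, dilated by the isometry $u_2=|\delta_1\rangle$ with $L_2=\mathbb C$.
\begin{itemize}
\item $U=\mathbb C|\delta_2\rangle$ and $V=\mathbb C\langle\delta_2|$ are quantum relations.
\item $V\circ U=\mathbb C$, so $(V\circ U)_{\theta_1\oplus\theta_3}=\mathbb C$.
\item But $U_{\theta_1\oplus\theta_2}=u_2^*U=\mathbb C\langle\delta_1|\delta_2\rangle=\{0\}$, so the left-hand side is $\{0\}$.
\end{itemize}

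In your terms, the central support of $p=u_2u_2^*$ is $z\otimes 1$, where $\ker\theta_2=(1-z)M_2$. The part $V(1-z)U$ is simply lost, and it need not vanish. Here $z=\mathrm{diag}(1,0)$, so $VzU=\{0\}$ while $V(1-z)U=V\circ U\neq\{0\}$.

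Your own argument, run without injectivity, gives the correct general identity. The weak$^*$-closed ideal generated by $p$ in $M_2'\vnten\mc B(L_2)$ is $(z\otimes 1)(M_2'\vnten\mc B(L_2))$, and this yields
\[ V_{\theta_2\oplus\theta_3}\circ U_{\theta_1\oplus\theta_2}=(V\circ zM_2'\circ U)_{\theta_1\oplus\theta_3}. \]
This is consistent with the paper's later identification $V_\theta=V^{\theta_N*}\circ V\circ V^{\theta_M}$ for $*$-homomorphisms (via Proposition~\ref{prop:pullback_as_qr_composition}), combined with $V^{\theta_2}\circ V^{\theta_2*}=zM_2'$ from Lemma~\ref{lem:kernal_theta}.

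\textbf{With $\theta_2$ injective, your proof is complete.} The proposition needs this extra hypothesis. It holds in the case where the paper applies the result: faithful representations, when showing that (co)injectivity and (co)surjectivity do not depend on the representation.

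Under that hypothesis you follow the paper's route: the same dilations from Remark~\ref{rem:changing_base_spaces}, the same use of uniqueness of $V_\theta$ to fix $u_1,u_2,u_3$, and the same observation that $u_2u_2^*\in M_2'\vnten\mc B(L_2)$.

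Your version is more careful than the paper's, though. The paper deletes the middle factor $u_2u_2^*$ in one line, citing only $u_2u_2^*\in M_2'\vnten\mc B(L_2)$ and the module property. That justifies only the inclusion $\subseteq$. Your decomposition $1=\sum_k y_kpy_k^*$, with $y_k^*y_k\le p$ partial isometries in $M_2'\vnten\mc B(L_2)$, is what supplies $\supseteq$. It is available exactly because $p$ has central support $1$ when $\theta_2$ is injective.
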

\begin{proof}
Let $u_i \colon K_i \to H_i\otimes L_i$ dilate $\theta_i$, so $U_{\theta_1\oplus\theta_2} = u_2^*(U \vnten \mc B(L_1, L_2)) u_1$, by Remark~\ref{rem:changing_base_spaces}.  Similarly, $V_{\theta_2\oplus\theta_3} = u_3^*(V \vnten \mc B(L_2,L_3)) u_2$.  Thus
\begin{align*}
& V_{\theta_2\oplus\theta_3} \circ U_{\theta_1\oplus\theta_2}  \\
&= \lin\big\{ u_3^*(v \otimes t_1) u_2 u_2^*(u\otimes t_2) u_1 : v\in V, u\in U, t_1\in \mc B(L_2,L_3), t_2\in  \mc B(L_1, L_2) \big\}\clos^{w^*}  \\
&= \lin\big\{ u_3^*(v \otimes t_1)(u\otimes t_2) u_1 : v\in V, u\in U, t_1\in \mc B(L_2,L_3), t_2\in  \mc B(L_1, L_2) \big\}\clos^{w^*}  \\
&= u_3^*((V\circ U) \vnten \mc B(L_1,L_3)) u_1 = (V\circ U)_{\theta_1\oplus\theta_3},
\end{align*}
here using that $u_2u_2^* \in M_2' \vnten \mc B(L_2)$, and $U$ is a left $M_2'$-module (or that $V$ is a right $M_2'$-module), and that $\mc B(L_2,L_3) \circ \mc B(L_1,L_2) = \mc B(L_1,L_3)$, which requires a routine argument (approximation by finite-rank operators) to check.
\end{proof}

For the notion of relation from Definition~\ref{defn:qrel_1}, we might formally think of a relation as really being an equivalence class, where for each pair $H,K$ with $M\subseteq\mc B(H), N\subseteq\mc B(K)$ we have some representative of $V$.  We shall not be this formal, but it is important to always check that definitions are well-defined, that is, we really are specifying one equivalence class.

We hence obtain a category: objects are von Neumann algebras and morphisms are quantum relations.  It is perhaps not completely obvious that composition is associative, and as we use similar ideas later, let us give the details.  Given quantum relations $U,V,W$ we claim that $U\circ (V\circ W)$ equals
\[ X = \lin\{ uvw : u\in U, v\in V, w\in W \} \clos^{w^*}. \]
Obviously $X \subseteq U\circ (V\circ W)$.  Conversely, we show that for $u\in U, y\in V\circ W$ we have $uy\in X$; then $U\circ (V\circ W)\subseteq X$ as $X$ is a weak$^*$-closed subspace.  However, as $X$ is weak$*$-closed and a subspace, and multiplication is separately weak$^*$-continuous, we can approximate $y$ by elements of the form $vw$ for $v\in V, w\in W$, and the claim follows.  Similarly $X = (U\circ V)\circ W$ and so in particular $(U\circ V)\circ W = U\circ (V\circ W)$.

The identity morphism on $M$ is the quantum relation $M'$, for if $V$ is a quantum relation from $M$ to $N$ then $V \circ M' \subseteq V$ be the bimodule property, but as $1\in M'$, obviously then $V = V\circ M'$.

We shall not study further category-theoretic properties of this category, compare \cite{Kornell_QuantumSets}, though this would be interesting to do.

\section{Functions between quantum sets}\label{sec:quantum_functions}

Re-writing and generalising definitions from \cite[Section~4]{Kornell_QuantumSets} in our notation, we have the following.

\begin{definition}\label{defn:functions}
Let $M,N$ be von Neumann algebras, and let $V$ be a quantum relation from $M$ to $N$.  Then $V$ is said to be \emph{coinjective} when $V\circ V^* \subseteq N'$ and \emph{cosurjective} when $V^*\circ V \supseteq M'$.  When $V$ satisfies both of these conditions, we say that $V$ is a \emph{function}; a coinjective relation is a \emph{partial function}.  Similarly, $V$ is \emph{injective} when $V^*\circ V \subseteq M'$ and \emph{surjective} when $V\circ V^* \supseteq N'$.
\end{definition}

We immediately check that these notions are well-defined.

\begin{lemma}\label{lem:com_under_dilation}
Let $M\subseteq\mc B(H)$ be a von Neumann algebra and $\theta_M \colon M \to \mc B(H_1)$ a unital normal $*$-homomorphism, dilated as $\theta_M(x) = u_M^*(x\otimes 1)u_M$ for some isometry $u_M \colon H_1 \to H \otimes L$ say.  Then $x \in \theta_M(M)'$ if and only if $u_M x u_M^* \in M' \vnten \mc B(L)$ and so $\theta_M(M)' = u_M^*(M'\vnten\mc B(L))u_M$.
\end{lemma}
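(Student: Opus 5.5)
The key fact is that $p = u_M u_M^*$ is a projection in $\pi(M)' = M'\vnten\mc B(L)$, where $\pi(x)=x\otimes 1$. This is because $u_M\theta_M(x) = (x\otimes 1)u_M$ for $x\in M$. This intertwining is the standard consequence of $u_M$ being an isometry with $u_M^*(x\otimes1)u_M$ multiplicative: for $x\in M$,
\[
\bigl\| (x\otimes1)u_M - u_M\theta_M(x)\bigr\|^2 = \bigl\| u_M^*(x^*x\otimes1)u_M - \theta_M(x)^*\theta_M(x)\bigr\| = 0
\]
(expand the product and use $\theta_M(x^*x)=\theta_M(x)^*\theta_M(x)$). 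Taking adjoints gives $u_M^*(x\otimes 1) = \theta_M(x)u_M^*$. Hence $p$ commutes with $x\otimes 1$, which is the claim; this was in any case already used in Lemma~\ref{lem:HM_mapping_preserves_qr}. I then prove the two implications of the stated equivalence separately.

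First suppose $x\in\theta_M(M)'$. For $y\in M$,
\[
(y\otimes 1)u_M x u_M^* = u_M\theta_M(y) x u_M^* = u_M x\theta_M(y)u_M^* = u_M x u_M^*(y\otimes1),
\]
so $u_Mxu_M^*\in (M\otimes 1)' = M'\vnten\mc B(L)$. Here I quote the commutation theorem for tensor products, $(M\vnten\mathbb C1)' = M'\vnten\mc B(L)$.

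Conversely, suppose $u_M x u_M^* \in M'\vnten\mc B(L)$. Then
\[
\theta_M(y)x = u_M^*(y\otimes1)u_M x u_M^* u_M = u_M^* u_M x u_M^*(y\otimes 1)u_M = x\theta_M(y),
\]
using $u_M^*u_M=1$ twice. So $x\in\theta_M(M)'$, which completes the equivalence.

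For the final equality, the first direction gives $x = u_M^*(u_Mxu_M^*)u_M \in u_M^*(M'\vnten\mc B(L))u_M$, so $\theta_M(M)'\subseteq u_M^*(M'\vnten\mc B(L))u_M$. For the reverse inclusion, take $z\in M'\vnten\mc B(L)$ and set $x=u_M^*zu_M$. Then $u_Mxu_M^* = pzp$, and this lies in $M'\vnten\mc B(L)$ because $p$ does; the equivalence then gives $x\in\theta_M(M)'$. The only genuinely non-formal ingredients are the intertwining relation and the tensor commutation theorem.
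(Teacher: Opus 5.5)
Your proof is correct and essentially matches the paper's argument: both rest on $u_M^*u_M=1$, on $u_Mu_M^*\in M'\vnten\mc B(L)$, and on the commutation theorem $(M\otimes 1)'=M'\vnten\mc B(L)$, and both obtain the final equality exactly as you do, via $u_Mu_M^*zu_Mu_M^*$ and $x=u_M^*(u_Mxu_M^*)u_M$. The only cosmetic differences are that you derive the intertwining relation $u_M\theta_M(y)=(y\otimes 1)u_M$ yourself rather than quoting it, and that you prove the two directions separately where the paper uses a single chain of implications that closes up into equivalences.
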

\begin{proof}
For $x\in\mc B(H_1)$ we have the chain of implications
\begin{align*}
x \in \theta_M(M)'
& \iff
x u_M^*(y\otimes 1)u_M = u_M^*(y\otimes 1)u_M x  \qquad (y\in M) \\
& \implies
u_M x u_M^*(y\otimes 1)u_Mu_M^* = u_M u_M^*(y\otimes 1)u_M x u_M^*  \qquad (y\in M)  \\
& \implies
u_M x u_M^* (y\otimes 1) = (y\otimes 1) u_M x u_M^*  \qquad (y\in M)  \\
& \implies
x u_M^* (y\otimes 1)u_M = u_M^* (y\otimes 1) u_M x  \qquad (y\in M)
\end{align*}
using that $u_M^*u_M=1$ and $u_M u_M^* \in M'\vnten\mc B(L)$.  So we have equivalences throughout, and hence $x\in\theta_M(M)'$ if and only if $u_Mxu_M^* \in (M\otimes 1)' = M'\vnten\mc B(L)$.
So, given $y\in M'\vnten\mc B(L)$ we have that $x = u_M^* y u_M$ satisfies $u_M x u_M^* = (u_Mu_M^*) y (u_Mu_M^*) \in M'\vnten\mc B(L)$ so $x\in\theta_M(M)'$; conversely, for $x\in\theta_M(M)'$ we have that $x = u_M^*u_M x u_M^* u_M \in u_M^*(M'\vnten\mc B(L))u_M$.
\end{proof}

\begin{proposition}
Let $V$ be a quantum relation from $M$ to $N$.  The choice of Hilbert spaces $H,K$ upon which $M,N$ act does not change whether $V$ is (co)injective or (co)surjective.
\end{proposition}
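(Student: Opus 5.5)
The plan is to use the concrete description of base change from Remark~\ref{rem:changing_base_spaces} together with Proposition~\ref{prop:base_change_to_comp} and Lemma~\ref{lem:com_under_dilation}. As in the proof of Corollary~\ref{corr:indep_rep}, any two choices of representation are related by unital normal $*$-isomorphisms $\theta_M\colon M\to\mc B(H_1)$ and $\theta_N\colon N\to\mc B(K_1)$ (with inverses of the same type). Since $V\mapsto V_\theta$ is the bijection identifying the two pictures, it suffices to show that each of the four properties passes from $V$ to $V_{\theta_M\oplus\theta_N}$. The converse then follows by applying the same argument with the inverse isomorphisms.

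The first step is to record how the operations interact with base change. By Remark~\ref{rem:changing_base_spaces}, $V_{\theta_M\oplus\theta_N} = u_N^*(V\vnten\mc B(L_1,L_2))u_M$, so taking adjoints gives
\[
(V_{\theta_M\oplus\theta_N})^* = u_M^*(V^*\vnten\mc B(L_2,L_1))u_N = (V^*)_{\theta_N\oplus\theta_M}.
\]
Proposition~\ref{prop:base_change_to_comp}, applied with $M_1=N$, $M_2=M$, $M_3=N$ (and the analogous choice with the roles swapped), then gives
\[
(V_{\theta_M\oplus\theta_N})\circ(V_{\theta_M\oplus\theta_N})^* = (V\circ V^*)_{\theta_N\oplus\theta_N}
\]
and similarly for $V^*\circ V$. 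Next I identify the base change of the identity relation: $(N')_{\theta_N\oplus\theta_N} = u_N^*(N'\vnten\mc B(L_2))u_N = \theta_N(N)'$ by Lemma~\ref{lem:com_under_dilation}. Here the $\mc B(L_2,L_2)$ corner is used, and I must check that the remark's formula applies when the source and target algebras coincide. This amounts to reading the remark with $M=N$, which I expect to be harmless.

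Finally, base change $W\mapsto W_{\theta_N\oplus\theta_N}$ is order preserving, being an inclusion-preserving construction $u^*(W\vnten\mc B(L))u$. Hence $V\circ V^*\subseteq N'$ implies
\[
V_\theta\circ V_\theta^*\subseteq (N')_{\theta_N\oplus\theta_N} = \theta_N(N)',
\]
which is coinjectivity in the new representation. The reverse inclusions (surjectivity and cosurjectivity) and the $M'$ cases follow in exactly the same way.

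The main obstacle is justifying that $(V\circ V^*)_{\theta_N\oplus\theta_N}$ means the same thing as a base change of a relation from $N$ to $N$, since the earlier lemmas are phrased through $N\oplus N$. The issue is to ensure the order-preserving map on relations over $N$ (Weaver's setting) agrees with the corner formula. I would handle this directly from the formula $u_N^*(W\vnten\mc B(L_2))u_N$, bypassing the direct-sum formalism, and verify monotonicity and the identity computation there. Because the maps are bijections here, order preservation in both directions also gives the converse implications.
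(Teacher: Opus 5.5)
Your proposal is correct and follows essentially the same route as the paper: the adjoint identity $\tilde V^* = (V^*)_{\theta_N\oplus\theta_M}$, Proposition~\ref{prop:base_change_to_comp} to rewrite the composite as a base change, Lemma~\ref{lem:com_under_dilation} to identify the base change of the commutant with $\theta_N(N)'$ (or $\theta_M(M)'$), monotonicity, and the inverse isomorphisms for the converse. The paper writes out the cosurjective case where you write out the coinjective one. The obstacle you flag is not a real one: Proposition~\ref{prop:base_change_to_comp} is stated for arbitrary $M_1,M_2,M_3$, so $M_1=M_3=N$ is allowed, and the corner computation in Remark~\ref{rem:changing_base_spaces} never uses that the two algebras differ.
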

\begin{proof}
Let $\theta_M \colon M \to \mc B(H_1)$ and $\theta_N \colon N \to \mc B(K_1)$ be faithful representations, and set $\tilde V = V_{\theta_M\oplus\theta_N}$, so we wish to show that $V$ is cosurjective if and only if $\tilde V$ is, and so forth.  As $\tilde V^* = V^*_{\theta_N\oplus\theta_M}$, Proposition~\ref{prop:base_change_to_comp} shows that $\tilde V^* \circ \tilde V = (V^*\circ V)_{\theta_M\oplus\theta_M}$.  If $M' \subseteq V^*\circ V$ then $(M')_{\theta_M\oplus\theta_M} \subseteq \tilde V^* \circ \tilde V$, but $(M')_{\theta_M\oplus\theta_M} = u_M^*(M'\vnten\mc B(L_1))u_M = \theta_M(M)'$ by Lemma~\ref{lem:com_under_dilation}.  (This last point also follows from \cite[Theorem~2.7]{Weaver_QuantumRelations}.)  So $V$ cosurjective implies that $\tilde V$ is cosurjective, and swapping the roles of $H,H_1$ and $K,K_1$ shows the converse.  The other cases are analogous.
\end{proof}

We aim to generalise \cite[Theorem~7.4]{Kornell_QuantumSets} which shows that over algebras of the form $L^\infty(\mc X), L^\infty(\mc Y)$, quantum functions biject with unital normal $*$-homomorphism $L^\infty(\mc Y) \to L^\infty(\mc X)$.  In fact, this was already done by Kornell in the unpublished work \cite{kornell2015quantumfunctions}, with a direct argument, and also by the claim (without further detail) that the result actually follows directly from \cite[Theorem~4.2.7]{SG_QuanStocProcess_NCG} or \cite[Proposition~6.12]{Rieffel_mortia_cstar_wstar}, that is, by using some results on Hilbert $C^*$-modules.  For the record, and as we wish to prove a bit more, we shall sketch how this argument proceeds.

Fix von Neumann algebras $M\subseteq\mc B(H)$ and $N\subseteq\mc B(K)$.  We do not assume that homomorphisms are unital.

\begin{proposition}\label{prop:defn_V_theta}
Let $\theta \colon N \to M$ be a normal $*$-homomorphism and define
\[ V^\theta = \{ v\in\mc B(H,K) : yv = v\theta(y) \ (y\in N) \}. \]
Then $V^\theta$ is a quantum relation from $M$ to $N$ which is coinjective.
\end{proposition}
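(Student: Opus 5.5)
First I will check that $V^\theta$ is a quantum relation in the sense of Definition~\ref{defn:qrel_1}. It is plainly a linear subspace of $\mc B(H,K)$. For weak$^*$-closedness, fix $y\in N$. The map $v\mapsto yv - v\theta(y)$ from $\mc B(H,K)$ to itself is weak$^*$-continuous, because left and right multiplication by fixed operators are weak$^*$-continuous. Hence $V^\theta$ is an intersection of kernels of weak$^*$-continuous maps, and so it is weak$^*$-closed. For the bimodule property, take $a\in N'\subseteq\mc B(K)$, $b\in M'\subseteq\mc B(H)$, $v\in V^\theta$ and $y\in N$. Using $ya=ay$ and $\theta(y)b = b\theta(y)$ (as $\theta(y)\in M$), I compute
\[ y(avb) = a(yv)b = a v\theta(y) b = (avb)\theta(y). \]
So $avb\in V^\theta$. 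Normality of $\theta$ is not needed here.

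Next I will show coinjectivity, that is $V^\theta\circ (V^\theta)^* \subseteq N'$. Since $N'$ is a weak$^*$-closed subspace and $V^\theta\circ(V^\theta)^*$ is the weak$^*$-closed span of products $vw^*$ with $v,w\in V^\theta$, it suffices to show each $vw^*$ commutes with $N$. The defining relation $yv = v\theta(y)$, applied to $y^*$ and then adjointed, gives $w^* y = \theta(y) w^*$ for $w\in V^\theta$. Hence, for $y\in N$,
\[ y vw^* = v\theta(y) w^* = v w^* y. \]
So $vw^*\in N'$, and taking the weak$^*$-closed linear span gives the inclusion.

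There is no real obstacle in this argument. The only point needing care is the adjoint step: the relation must be applied with $y^*$ in place of $y$, using that $\theta$ is a $*$-map, so that it transfers to $w^*$. I do not expect to need independence of representation here, because the statement fixes $M\subseteq\mc B(H)$ and $N\subseteq\mc B(K)$ and makes a claim about this concrete $V^\theta$. Comparing $V^\theta$ across representations would be a separate matter, presumably handled later via Remark~\ref{rem:changing_base_spaces}.
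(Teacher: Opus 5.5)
Your proposal is correct and follows the paper's proof essentially verbatim: the same commutation computation $y(avb)=(avb)\theta(y)$ for the bimodule property, and the same adjoint trick $w^*y=\theta(y)w^*$ to get $yvw^*=vw^*y$ for coinjectivity. You also spell out the weak$^*$-closedness and the passage to the weak$^*$-closed span, which the paper leaves as ``clearly''.
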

\begin{proof}
Clearly $V^\theta$ is a weak$^*$-closed subspace.  For $a\in M', b\in N'$ and $v\in V^\theta$, for $y\in N$ we have $y (bva) = b (yv) a = b v \theta(y) a = (bva) \theta(y)$, showing that $bva\in V^\theta$.  Hence $V^\theta$ is a bimodule, that is, a quantum relation.

For $u^*\in V^{\theta *}$, as $\theta$ is a $*$-homomorphism, we have that $\theta(y) u^* = u^*y$ for $y\in N$.  So given also $v\in V^\theta$, for $y\in N$ we have $v u^* y = v \theta(y) u^* = y vu^*$, showing that $vu^*\in N'$.  Hence $V^{\theta} \circ V^{\theta *} \subseteq N'$, that is, $V^{\theta}$ is coinjective.
\end{proof}

We now consider a general coinjective quantum relation $V$ from $M$ to $N$.  Consider $V^*$, which is injective, and a right $N'$-module.  As $V^*$ is injective, we can define an $N'$-valued inner-product by $(u^*|v^*) = uv^*\in N'$ for $u^*,v^*\in V^*$.  Hence $V^*$ is a Hilbert $C^*$-module over $N'$: this means that $V^*$ is a right $N'$-module, and has an $N'$-valued inner-product compatible with this module structure; see, for example, \cite{Lance_HilbModsBook} for more on this.

\begin{proposition}\label{prop:selfdual}
The module $V^*$ is self-dual, meaning that bounded $N'$-linear maps $V^*\to N'$ are given by taking an inner-product with an element of $V^*$
\end{proposition}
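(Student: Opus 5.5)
Let $\phi\colon V^*\to N'$ be a bounded right $N'$-linear map; we must produce $w^*\in V^*$ with $\phi(v^*) = (w^*|v^*) = wv^*$ for all $v^*\in V^*$. The approach is to build $w$ as a weak$^*$-limit, using two facts: $N'$ is a von Neumann algebra, and $V^*$ is weak$^*$-closed. The latter makes $V^*$ a self-dual $W^*$-module in the sense of Paschke. Note also that since $V\circ V^*\subseteq N'$ is a weak$^*$-closed $N'$-bimodule, $I = \lin\{vu^*\}\clos^{w^*}$ is a weak$^*$-closed two-sided ideal of $N'$, so $I = N'p$ for some central projection $p$ of $N'$.

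First I would show that $V^*$ has a ``frame''. By Zorn's lemma, choose a maximal family $(v_i^*)$ in $V^*$ with $v_iv_j^* = \delta_{ij}p_i$, where each $p_i$ is a projection in $N'$. This is the standard polar-decomposition trick. Given $v^*\in V^*$, the element $vv^*\in N'$ is positive. Writing $v^* = w_0^*|v^*|$ in polar form, we have $|v^*|^2 = vv^*\in N'$, so the partial isometry $w_0^*$ lies in $V^*$, because it is a weak$^*$-limit of $v^*f(vv^*)$ with $f$ continuous and $f(vv^*)\in N'$. Maximality then forces $\sum_i v_i^*v_i$ to act as the identity on $V^*$, in the sense that $\sum_i v_i^*(v_iv^*) = v^*$ weak$^*$. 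To see this, note that $e = \sum_i v_i^*v_i$ is a projection. If $(1-e)v^*\neq 0$, then $(1-e)v^* \in V^*$ by the bimodule/composition argument: $v_i^*v_iv^* = v_i^*(v_iv^*)\in V^*\cdot N'\subseteq V^*$, and weak$^*$-closure handles the sum. Its partial isometry part is then orthogonal to every $v_i^*$, contradicting maximality.

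Next, I would define $w^* = \sum_i v_i^*\phi(v_i^*)^*$. For a finite subset $F$, let $w_F^* = \sum_{i\in F} v_i^*\phi(v_i^*)^*\in V^*$. Using $N'$-linearity,
\[ w_F^*{}^*w_F^* \text{-type norms: } \|w_F^*\|^2 = \Big\|\sum_{i\in F}\phi(v_i^*)p_i\phi(v_i^*)^*\Big\| = \Big\|\phi\Big(\sum_{i\in F} v_i^*\phi(v_i^*)^*\Big)\Big\| \le \|\phi\|\,\|w_F^*\|, \]
so the net is bounded by $\|\phi\|$. Here I use $\phi(v_i^*)p_i = \phi(v_i^*p_i) = \phi(v_i^*)$. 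The net converges strongly, since the summands have orthogonal ranges $v_i^*p_i$. Weak$^*$-closure of $V^*$ then gives $w^*\in V^*$.

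Finally, I would check that $wv^* = \phi(v^*)$. On one side, $wv^* = \sum_i\phi(v_i^*)v_iv^* = \sum_i\phi(v_i^*(v_iv^*))$. On the other, $\phi(v^*) = \phi(\sum_i v_i^*v_iv^*)$.

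The main obstacle is justifying that $\phi$ commutes with this weak$^*$-convergent infinite sum, because $\phi$ is only assumed norm-bounded. My plan is to avoid needing normality of $\phi$, as Paschke does. Set $x = v^* - \sum_{i\in F}v_i^*v_iv^*$. Then $\phi(x)p_j = \phi(xv_j^*v_j)$ vanishes for $j\in F$. I would then estimate $\phi(x)\phi(x)^*$ through the Cauchy--Schwarz-type inequality $\phi(x)\phi(x)^*\le\|\phi\|^2\,x^*x$, which bounds it by $\|\phi\|^2(vv^* - \sum_F vv_i^*v_iv^*)$. This tends to $0$ weak$^*$, which gives convergence of $\phi(\sum_F\cdots)$ to $\phi(v^*)$ in the strong topology. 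Proving that inequality for bounded module maps is the delicate step; alternatively, one can cite \cite[Proposition~3.3]{Paschke} or \cite{Lance_HilbModsBook}.
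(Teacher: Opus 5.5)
Your proof is correct in its main line, but it takes a genuinely different route from the paper's. The paper constructs nothing. It cites \cite[Lemma~8.5.4(1)]{BlecherLeMerdy_Book}: a Hilbert module over a von Neumann algebra whose inner product is separately weak$^*$-continuous is self-dual. It then notes that $(u^*,v^*)\mapsto uv^*$ clearly has this property. You instead carry out Paschke's frame argument concretely, in these steps:
\begin{itemize}
\item Polar parts of elements of $V^*$ stay in $V^*$, since $|v^*|=(vv^*)^{1/2}\in N'$ and $V^*$ is weak$^*$-closed.
\item Zorn's lemma then gives a family $(v_i^*)$ with $\sum_i v_i^*v_iv^*=v^*$.
\item The identity $w_Fw_F^*=\phi(w_F^*)$ bounds the partial sums by $\|\phi\|$.
\item Orthogonality of the ranges of the $v_i^*$ gives strong convergence.
\item Weak$^*$-closedness puts $w^*$ in $V^*$.
\end{itemize}
These steps check out. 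Your version is self-contained, uses only weak$^*$-closedness of $V^*$ (which is what the cited lemma's hypothesis amounts to here), and produces an explicit frame. The paper's version buys brevity at the cost of a black box.

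The final step has two slips. First, the inequality you need is $\phi(x)^*\phi(x)\le\|\phi\|^2x^*x$. As you wrote it, $\phi(x)\phi(x)^*\le\|\phi\|^2x^*x$ is false. Take $H=K=\mathbb C^2$, $M=N=\mathbb C1$, $V=\mathbb M_2$, $\phi=\id$ and $x=e_{1,2}$; this would require $e_{1,1}\le Ce_{2,2}$. The correct orientation is exactly what gives $\|\phi((1-e_F)v^*)\xi\|^2\le\|\phi\|^2\|(1-e_F)v^*\xi\|^2\to0$, where $e_F=\sum_{i\in F}v_i^*v_i$. Hence $\phi(e_Fv^*)\to\phi(v^*)$ strongly. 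Since $w_Fv^*=\phi(e_Fv^*)\to wv^*$ weakly, you get $wv^*=\phi(v^*)$.

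Second, the claim that $\phi(x)p_j$ vanishes for $j\in F$ is false in general. It also does not typecheck as written, since $p_j=v_jv_j^*$ and $xp_j$ need not be $0$; what holds is $v_jx=0$. You never use it, so just delete it.

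Finally, the step you call delicate is easy here with a tool you already have. Write $x=s|x|$ with $s\in V^*$ a partial isometry and $|x|\in N'$. Right $N'$-linearity then gives $\phi(x)^*\phi(x)=|x|\,\phi(s)^*\phi(s)\,|x|\le\|\phi\|^2x^*x$. So no appeal to \cite{paschke_inner_prod_mods} or \cite{Lance_HilbModsBook} is needed.
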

\begin{proof}
The module $V^*$ is a concrete Hilbert module, as it is by definition represented on a Hilbert space.  Such objects are well-studied, but we are not aware of a canonical reference.  To show that $V^*$ is self-dual, it is quickest to invoke  \cite[Lemma~8.5.4(1)]{BlecherLeMerdy_Book} which tells us that $V^*$ is self-dual, because the $N'$-valued inner-product is separately weak$^*$-continuous in each variable, which is clearly the case here.
\end{proof}

Much of the literature (e.g. \cite{paschke_inner_prod_mods, Rieffel_mortia_cstar_wstar}) works with self-dual modules, but the book \cite{SG_QuanStocProcess_NCG} works instead with topological condition; but in practise, the proofs apply to any weak$^*$-closed concrete Hilbert module, and so do apply to $V^*$.  In fact these approaches are really equivalent, \cite{Skeide_vnmods_ints}.

The following is the result we need from the book \cite{SG_QuanStocProcess_NCG}; we give a proof using Rieffel's work \cite{Rieffel_mortia_cstar_wstar} for completeness.

\begin{theorem}[{\cite[Theorem~4.2.7]{SG_QuanStocProcess_NCG}}]\label{thm:SG}
Let $H,K$ be Hilbert spaces, let $N_0 \subseteq\mc B(K)$ be a von Neumann algebra, and let $X\subseteq\mc B(K,H)$ be a Hilbert $C^*$-module over $N_0$ which is weak$^*$-closed, and satisfies that $\lin\{ x(\xi) : x\in X, \xi\in K \}$ is dense in $H$.  There is a unique normal unital $*$-homomorphism $\theta \colon N_0' \to \mc B(H)$ with
\[ X = V^{\theta *} = \{ x\in\mc B(K,H) : xa' = \theta(a')x \ (a'\in N_0') \}. \]
\end{theorem}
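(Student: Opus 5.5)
We plan to build $\theta$ directly from the module structure, along the lines of Rieffel's construction. Here $X$ is a right $N_0$-module with inner product $(x|y) = x^*y \in N_0$. Given $a'\in N_0'$, we want to define $\theta(a')$ on vectors of the form $x(\xi)$ by
\[ \theta(a')\Big(\sum_i x_i(\xi_i)\Big) = \sum_i x_i(a'\xi_i), \qquad x_i\in X,\ \xi_i\in K. \]
The first step is to check that this is well defined and bounded. Write $\boldsymbol\xi = (\xi_i)$ and let $G = ((x_i|x_j)) = (x_i^*x_j)\in\mathbb M_n(N_0)$, which is a positive matrix. Then
\[ \Big\|\sum_i x_i(a'\xi_i)\Big\|^2 = \big( (a'\xi_i) \big| G (a'\xi_j) \big) = \big( \boldsymbol\xi \big| (1\otimes a'^*) G (1\otimes a') \boldsymbol\xi \big). \]
Since $a'$ commutes with the entries of $G$, so does $1\otimes a'$, and hence we may use $G^{1/2}$, which also commutes with $1\otimes a'$. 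This gives the bound $\le \|a'\|^2 (\boldsymbol\xi|G\boldsymbol\xi) = \|a'\|^2\|\sum_i x_i(\xi_i)\|^2$. Consequently $\theta(a')$ is well defined and bounded on the dense span, and it extends to all of $H$ by the density hypothesis.

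Next we verify the algebraic properties, all on the dense span. Linearity, multiplicativity and $\theta(1)=1$ are immediate from the defining formula. For the adjoint, compute
\[ \big(x(\xi)\big|\theta(a')y(\eta)\big) = (\xi|x^*y\,a'\eta) = (a'^*\xi|x^*y\,\eta), \]
using $x^*y\in N_0$ and $a'\in N_0'$. This shows $\theta(a')^*=\theta(a'^*)$. Normality follows because for a bounded increasing net $a'_\alpha\to a'$ strongly, $\theta(a'_\alpha)$ converges strongly on the dense span, and the net is uniformly bounded. By construction every $x\in X$ satisfies $xa'=\theta(a')x$, so $X\subseteq V^{\theta*}$. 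Uniqueness also follows: any $\theta'$ with $X\subseteq V^{\theta'*}$ must agree with $\theta$ on vectors $x(\xi)$, and these have dense span.

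The main obstacle is the reverse inclusion $V^{\theta*}\subseteq X$. Take $z\in\mc B(K,H)$ with $za'=\theta(a')z$ for all $a'\in N_0'$. For $x\in X$, the operator $x^*z$ commutes with $N_0'$, so $x^*z\in N_0''=N_0$. Therefore $z$ induces a bounded $N_0$-linear map $X\to N_0$, namely $x\mapsto (z^*x)=(x^*z)^*$. Since $X$ is weak$^*$-closed and the inner product is separately weak$^*$-continuous, it is self-dual by the same argument as in Proposition~\ref{prop:selfdual} (\cite[Lemma~8.5.4(1)]{BlecherLeMerdy_Book}). Hence there is $x_0\in X$ with $x^*z=x^*x_0$ for all $x\in X$. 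It follows that $(z-x_0)^*x=0$ for every $x\in X$, so $z-x_0$ vanishes against the dense span of the ranges $x(K)$. This forces $(z-x_0)^*=0$ on $H$, so $z=x_0\in X$.

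One point to check carefully is that the functional really is bounded and $N_0$-linear in the sense required by the self-duality lemma. It is $N_0$-linear because $(xb)^*z=b^*x^*z$ for $b\in N_0$, and bounded by $\|z\|\|x\|$. We also need self-duality to be stated for maps $X\to N_0$ with the conjugate-linear convention used there. If citing the lemma proves awkward, the fallback is a direct argument: the net $\sum_\alpha x_\alpha x_\alpha^* z$, taken over a quasi-orthonormal family of $X$, converges weak$^*$ within $X$ to $z$, and we would show this using the density hypothesis.
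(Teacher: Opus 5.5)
Your proof is correct, and it is essentially the paper's argument with the citation of \cite[Proposition~6.12]{Rieffel_mortia_cstar_wstar} unpacked. Your formula $\theta(a')x(\xi)=x(a'\xi)$ is the action of $N_0'$ on the interior tensor product $X\otimes_{N_0}K$, transported to $H$ by the isometry $x\otimes\xi\mapsto x(\xi)$ that the paper obtains from the density hypothesis. Self-duality enters exactly where Rieffel uses it, to show that every intertwiner in $V^{\theta*}$ already lies in $X$.

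Working directly on $H$ has a minor advantage: it makes explicit that the copy of $X$ inside $\mc B(K,F_X(K))$ really is the original $X\subseteq\mc B(K,H)$, and it gives the uniqueness of $\theta$, both of which the paper leaves implicit. The quasi-orthonormal fallback you mention is not needed.
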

\begin{proof}
As $N_0 \subseteq \mc B(K)$, we see that $K$ is a ``generator'', see \cite[Proposition~1.3]{Rieffel_mortia_cstar_wstar}.  As $X$ is self-dual (same proof as Proposition~\ref{prop:selfdual}) by \cite[Proposition~6.12]{Rieffel_mortia_cstar_wstar} there is a Hilbert space $L$ with a normal unital $*$-homomorphism $\rho\colon N_0' \to \mc B(L)$ with $X$ isomorphic (as a Hilbert $C^*$-module) to
\[ V^{\rho *} = \{ x \in \mc B(K,L) : xa' = \rho(a')x \ (a'\in N_0') \}. \]
Indeed, $L = F_X(K)$ which, in the language of \cite[Chapter~4]{Lance_HilbModsBook} is the \emph{interior tensor product} of $X$ with $K$, namely the completion of the algebraic tensor product for the pre-inner-product given on elementary tensors by
\[ (x\otimes\xi|y\otimes\eta) = (\xi|x^*y\eta) = (x\xi|y\eta) \qquad (x,y\in X, \xi,\eta\in K). \]
As $X\subseteq\mc B(K,H)$ we get this simple form for the inner-product, and by the density hypothesis, we conclude that $L$ is isometric with $H$.  Thus identify $L$ with $H$ and set $\theta = \rho$.
\end{proof}

\begin{corollary}[Kornell]\label{corr:kornells_bijec}
Let $V\subseteq\mc B(H,K)$ be a quantum function from $M$ to $N$.  Then $V = V^\theta$ for a unique unital normal $*$-homomorphism $\theta \colon N \to M$.
\end{corollary}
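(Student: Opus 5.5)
We apply Theorem~\ref{thm:SG} to $X = V^*\subseteq\mc B(K,H)$ with $N_0 = N'\subseteq\mc B(K)$, so that $N_0' = N$. The plan has four steps: check the hypotheses of Theorem~\ref{thm:SG}, identify the resulting relation with $V^\theta$, show that $\theta$ lands in $M$, and prove uniqueness.

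\textbf{Hypotheses.} Coinjectivity, $V\circ V^*\subseteq N'$, says that $(u^*|v^*) = uv^*\in N'$ for $u^*,v^*\in V^*$. The right $N'$-module structure comes from $V$ being a left $N'$-module. So $V^*$ is a Hilbert $C^*$-module over $N_0$, and it is weak$^*$-closed because $V$ is.

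The density condition is where cosurjectivity enters. We have $M'\subseteq V^*\circ V$, so $1_H$ lies in the weak$^*$-closed span of products $v^*w$ with $v,w\in V$. Every such product has range inside the closed span of $\{v^*(\xi): v\in V,\ \xi\in K\}$. Let $p$ be the projection onto this closed span. Then $p\,v^*w = v^*w$ for all such products, and passing to weak$^*$-limits gives $p\cdot 1_H = 1_H$. Hence $p = 1$, which is the required density.

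\textbf{Existence of $\theta$.} Theorem~\ref{thm:SG} now gives a unique normal unital $*$-homomorphism $\theta\colon N\to\mc B(H)$ with
\[ V^* = \{x\in\mc B(K,H) : xy = \theta(y)x \ (y\in N)\}. \]
Taking adjoints, $v\in V$ exactly when $yv = v\theta(y)$ for all $y\in N$. So $V = V^\theta$ in the sense of Proposition~\ref{prop:defn_V_theta}, except that $\theta$ a priori takes values in $\mc B(H)$ rather than $M$.

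\textbf{$\theta$ takes values in $M$.} This is the step I expect to be the main obstacle. It suffices to show $\theta(y)\in M''$, that is, $\theta(y)$ commutes with every $a\in M'$. Fix $a\in M'$ and $y\in N$. Since $V$ is a right $M'$-module, $va\in V$ for each $v\in V$, and therefore $va\theta(y) = yva = v\theta(y)a$. This gives
\[ v\,\bigl(a\theta(y)-\theta(y)a\bigr) = 0 \qquad (v\in V). \]
Multiplying on the left by $w^*$ with $w\in V$ shows that every element of $V^*\circ V$ kills $c = a\theta(y)-\theta(y)a$. By weak$^*$-continuity this extends to the whole weak$^*$-closed span, which contains $1$ by cosurjectivity. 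Hence $c = 0$, and $\theta(N)\subseteq M$. Normality and unitality are inherited from Theorem~\ref{thm:SG}.

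\textbf{Uniqueness.} Suppose $V^\theta = V^{\theta'}$ for unital normal $*$-homomorphisms $\theta,\theta'\colon N\to M$. Then $v(\theta(y)-\theta'(y)) = 0$ for all $v\in V$ and $y\in N$. The same cosurjectivity argument as in the previous step forces $\theta = \theta'$. Alternatively, uniqueness follows from the uniqueness clause of Theorem~\ref{thm:SG}.
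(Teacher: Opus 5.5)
Your proof is correct and follows essentially the paper's route: apply Theorem~\ref{thm:SG} to $V^*$ over $N_0=N'$, obtain the density hypothesis from cosurjectivity, and show $\theta(N)\subseteq M''=M$ using the $M'$-bimodule property. The differences are cosmetic. You kill the commutator via $1\in V^*\circ V$ where the paper uses density of $\lin V^*K$. You also spell out uniqueness, which the paper leaves to the uniqueness clause of Theorem~\ref{thm:SG}.
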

\begin{proof}
By hypothesis, $V\circ V^* \subseteq N'$, so we can regard $V^*\subseteq\mc B(K,H)$ as a Hilbert $C^*$-module over $N_0 = N'$.  Also $M' \subseteq V^*\circ V$.  Suppose that $\xi_0\in H$ with $(\xi_0|x\xi)=0$ for all $x\in V^*, \xi\in K$.  Then certainly $0 = (\xi_0|x^*y\xi)=0$ for each $x,y\in V, \xi\in H$ and so $0 = (\xi_0|x\xi)$ for each $x\in M', \xi\in H$.  Taking $x=1, \xi=\xi_0$ shows that $\xi_0=0$, and so the density condition of Theorem~\ref{thm:SG} holds.  There is a unital normal $*$-homomorphism $\theta \colon N = N_0' \to \mc B(H)$ with $V = V^\theta$.  Furthermore, for $a\in N, b\in M', x\in V^*$ we have that $xa = \theta(a)x$, but also $bx\in V^*$ so $bxa = \theta(a)bx$, and hence $\theta(a)bx = b\theta(a)x$.  By the density condition, $\theta(a) b = b \theta(a)$, for all $a,b$, and so $\theta$ maps into $M'' = M$, as required.
\end{proof}

We shall finesse this argument to deal with non-unital homomorphisms, and relations which are only assumed to be coinjective.  We first study various properties of $V^\theta$.  Indeed, the definition of $V^\theta$ depends upon the choices of $H,K$ with $M\subseteq\mc B(H), N\subseteq\mc B(K)$, so firstly we check that we have correctly defined an ``equivalence class'', see the discussion at the end of Section~\ref{sec:qrs}.

\begin{proposition}\label{prop:changing_base_space}
Let $\theta_M \colon M \to \mc B(H_1)$ and $\theta_N \colon N \to \mc B(K_1)$ be normal injective unital $*$-homomorphisms, and let $\theta \colon N \to M$ be a normal $*$-homomorphism.  Let $\theta_1 \colon \theta_N(N) \to \theta_M(M)$ be induced by $\theta$, and then form $V^\theta$ and $V^{\theta_1}$.  Then $V^{\theta_1}$ is exactly the quantum relation induced by $V^\theta$ when transported along the homomorphism $\theta_M \oplus \theta_N \colon M\oplus N \to \mc B(H_1\oplus K_1)$.
\end{proposition}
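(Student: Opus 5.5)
We have to show $V^{\theta_1} = (V^\theta)_{\theta_M\oplus\theta_N}$. By Remark~\ref{rem:changing_base_spaces} the right-hand side is $u_N^*(V^\theta\vnten\mc B(L_1,L_2))u_M$, where $u_M\colon H_1\to H\otimes L_1$ and $u_N\colon K_1\to K\otimes L_2$ are isometries dilating $\theta_M,\theta_N$. Also $\theta_1 = \theta_M\circ\theta\circ\theta_N^{-1}$, so
\[ V^{\theta_1} = \{ w\in\mc B(H_1,K_1) : \theta_N(y)w = w\,\theta_M(\theta(y)) \ (y\in N)\}. \]
I will prove the two inclusions separately.

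\emph{Inclusion $\subseteq$ from right to left.} Let $v\in V^\theta$, $t\in\mc B(L_1,L_2)$ and $w=u_N^*(v\otimes t)u_M$. Since $u_N u_N^*\in N'\vnten\mc B(L_2)$, we have $u_N\theta_N(y)=(y\otimes 1)u_N$. Therefore
\[ \theta_N(y)w = u_N^*(y\otimes 1)(v\otimes t)u_M = u_N^*(v\theta(y)\otimes t)u_M = u_N^*(v\otimes t)(\theta(y)\otimes 1)u_M = w\,\theta_M(\theta(y)). \]
The last step uses $(\theta(y)\otimes1)u_M = u_M\theta_M(\theta(y))$. The set $V^{\theta_1}$ is a weak$^*$-closed subspace, so this gives $(V^\theta)_{\theta_M\oplus\theta_N}\subseteq V^{\theta_1}$.

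\emph{Reverse inclusion.} This is the main step. Take $w\in V^{\theta_1}$ and set $z=u_N w u_M^*\in\mc B(H\otimes L_1,K\otimes L_2)$. I want to show $z\in V^\theta\vnten\mc B(L_1,L_2)$. If so, then $w = u_N^* z u_M$ lies in the right-hand side and we are done.

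First, $z$ satisfies the intertwining relation $(y\otimes1)z = z(\theta(y)\otimes 1)$ for all $y\in N$. This follows from the same intertwining identities for $u_M,u_N$ as before:
\[ (y\otimes1)u_N w u_M^* = u_N\theta_N(y)wu_M^* = u_N w\,\theta_M(\theta(y))u_M^* = u_N w u_M^*(\theta(y)\otimes 1). \]
The last equality uses $\theta_M(x)u_M^* = u_M^*(x\otimes1)$, which is the adjoint of the intertwining relation.

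It remains to identify the set of such operators with $V^\theta\vnten\mc B(L_1,L_2)$. Any $z$ with this intertwining property can be sliced. For a rank-one $\rankone{\alpha}{\beta}$, the compression $(1\otimes\langle\beta|)\,z\,(1\otimes|\alpha\rangle)$ lies in $V^\theta$. The finite-rank truncations $(1\otimes p_2)z(1\otimes p_1)$, with $p_i$ finite-rank projections increasing to $1$, are finite sums of elements of $V^\theta\otimes\mc B(L_1,L_2)$. These truncations converge weak$^*$ to $z$, so $z\in V^\theta\vnten\mc B(L_1,L_2)$. Note that $V^\theta\vnten\mc B(L_1,L_2)$ is exactly the space of operators intertwining $y\otimes 1$ and $\theta(y)\otimes1$, namely $V^{\theta\otimes\id}$ in the amplified sense.

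I expect this identification of $V^\theta\vnten\mc B(L_1,L_2)$ with the amplified intertwiner space to be the only delicate point. It is handled by the finite-rank approximation argument already used in Proposition~\ref{prop:base_change_to_comp}.
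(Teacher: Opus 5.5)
Your proof is correct and takes essentially the same route as the paper. Both arguments pass to $z=u_N w u_M^*$, use the intertwining relation $(y\otimes 1)z=z(\theta(y)\otimes 1)$ to see that every slice of $z$ lies in $V^\theta$, and recover $w=u_N^*zu_M$ because $u_M,u_N$ are isometries. The differences are minor. Where the paper cites Kraus's property $S_\sigma$ for $\mc B(L)$ to identify the Fubini product with $V^\theta\vnten\mc B(L_1,L_2)$, you prove that special case directly by finite-rank truncation. You also prove the easy inclusion by direct computation, rather than through the ``if and only if'' characterisation modelled on Lemma~\ref{lem:com_under_dilation}.
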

\begin{proof}
We again use the notation of Remark~\ref{rem:changing_base_spaces}, and so we wish to show that $V^{\theta_1} = u_N^* (V^\theta \vnten \mc B(L_1,L_2)) u_M$.  By definition, $v \in V^{\theta_1}$ when $\theta_N(y) v = v \theta_M(\theta(y))$ for $y\in N$, that is, $u_N^* (y\otimes 1) u_N v = v u_M^*(\theta(y)\otimes 1)u_M$.  Arguing as in the proof of Lemma~\ref{lem:com_under_dilation}, we find that
\[ V^{\theta_1} = \{ v\in\mc B(H_1,K_1) : (y\otimes 1) u_N v u_M^* = u_N v u_M^* (\theta(y)\otimes 1) \ (y\in N) \}. \]
Again we regard $\mc B(H_1,K_1)$ as a subspace of $\mc B(H_1 \oplus K_1)$, and so forth, and so think of $u_N v u_M^*$ as acting on $(H_1\oplus K_1) \otimes L$.  Hence we can slice out by a normal functional, and comparing the resulting condition with the definition of $V^\theta$, we conclude that $v\in V^{\theta_1}$ exactly when $(\id\otimes\omega)(u_N v u_M^*) \in V^\theta$ for each $\omega\in\mc B(L)_*$.  By \cite[Remark~1.5]{Kraus_SliceMap}, as the bounded operators on a Hilbert space have property $S_\sigma$, it follows that
\[ V^{\theta_1} = \{ v \in \mc B(H_1, K_1) : u_N v u_M^* \in V^\theta \vnten \mc B(L_1, L_2) \} = u_N^* (V^\theta \vnten\mc B(L_1,L_2)) u_M, \]
for the second equality arguing exactly as in the second part of Lemma~\ref{lem:com_under_dilation}.
\end{proof}

For a $*$-homomorphism $\theta \colon N \to M$, as $\theta(1)$ is a projection in $M$ commuting with $\theta(N)'$, we have that $\theta(1)\theta(N)' = \theta(1) \theta(N)' \theta(1)$ is a non-unital von Neumann algebra (that is, a weak$^*$-closed self-adjoint subalgebra of $\mc B(H)$).

\begin{lemma}\label{lem:com_theta_image}
For a normal $*$-homomorphism $\theta\colon N\to M$ we have that $\theta(1) \theta(N)' = V^{\theta*} \circ V^{\theta}$.
\end{lemma}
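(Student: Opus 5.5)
Write $e=\theta(1)$, a projection in $M$.

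\emph{The inclusion $V^{\theta*}\circ V^\theta \subseteq e\,\theta(N)'$.}
Take $u,v\in V^\theta$ and $y\in N$. Since $u^*\in V^{\theta*}$ we have $\theta(y)u^* = u^*y$ (proof of Proposition~\ref{prop:defn_V_theta}). Hence
\[
u^*v\,\theta(y) = u^* y v = \theta(y)\,u^*v ,
\]
so $u^*v\in\theta(N)'$. Taking $y=1$ in the defining relation gives $v = v\theta(1) = ve$, and similarly $u^* = eu^*$, so $u^*v = e\,u^*v$. Since $e\,\theta(N)'$ is weak$^*$-closed (it is a non-unital von Neumann algebra, as noted above), the weak$^*$-closed span $V^{\theta*}\circ V^\theta$ lies in it.

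\emph{The reverse inclusion.} The plan is to dilate. Write $\theta$ as a representation of $N$ on $eH$, namely $y\mapsto\theta(y)|_{eH}$. This is a normal unital $*$-homomorphism $N\to\mc B(eH)$, so by the structure theory used in Remark~\ref{rem:changing_base_spaces} there are a Hilbert space $L$ and an isometry $w\colon eH\to K\otimes L$ with
\[
w\,\theta(y) = (y\otimes 1)\,w \qquad (y\in N).
\]
Regard $w$ as a partial isometry $H\to K\otimes L$ with $w^*w = e$. For each $\eta\in L$ set $v_\eta = (1\otimes\langle\eta|)\,w\in\mc B(H,K)$. Then
\[
y v_\eta = (1\otimes\langle\eta|)(y\otimes 1)w = (1\otimes\langle\eta|)w\theta(y) = v_\eta\theta(y),
\]
so $v_\eta\in V^\theta$. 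More generally, for $b\in N'\vnten\mc B(L)$ the operator $(1\otimes\langle\eta|)\,b\,w$ lies in $V^\theta$.

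Now let $x\in e\,\theta(N)'$, so $x = exe$ and $x$ commutes with $\theta(N)$. Exactly as in Lemma~\ref{lem:com_under_dilation}, the operator $wxw^*$ commutes with $N\otimes 1$, since $ww^*\in N'\vnten\mc B(L)$ by the intertwining relation. Hence $wxw^*\in N'\vnten\mc B(L)$. Then
\[
x = w^*(wxw^*)w = \sum_i w^*(1\otimes|\eta_i\rangle)(1\otimes\langle\eta_i|)(wxw^*)w ,
\]
where $(\eta_i)$ is an orthonormal basis of $L$ and the sum converges weak$^*$. Each summand has the form $v_{\eta_i}^*\big((1\otimes\langle\eta_i|)(wxw^*)w\big)$, with the second factor in $V^\theta$ by the remark above. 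So $x$ lies in the weak$^*$-closed span of $V^{\theta*}V^\theta$.

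\emph{Main obstacle.} The delicate step is handling non-unital $\theta$ in the dilation. One must check that $w$ is a genuine partial isometry with initial projection $e$, and that $ww^*$ lies in $N'\vnten\mc B(L)$, so that the commutant argument of Lemma~\ref{lem:com_under_dilation} applies. This holds because $w\theta(y)w^* = (y\otimes 1)ww^*$, and taking adjoints shows that $ww^*$ commutes with $N\otimes 1$. The remaining work is to confirm that the partial sums converge weak$^*$, which is standard.
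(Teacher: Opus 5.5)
Your proof is correct. The inclusion $V^{\theta*}\circ V^\theta\subseteq\theta(1)\theta(N)'$ is argued exactly as in the paper. The reverse inclusion rests on the same idea as the paper, namely dilating $\theta|_{eH}$ into an amplification of $N$, but you implement it differently.

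\textbf{The paper's route.} The paper first invokes Proposition~\ref{prop:changing_base_space}, together with the compatibility of base change with composition (Proposition~\ref{prop:base_change_to_comp}), to replace $K$ by $K\otimes L$. After this change, the intertwiner $r$ with $r\theta(y)=yr$ and $r^*r=\theta(1)$ lives in $\mc B(H,K)$ itself. So for $x\in\theta(N)'$ one has $xr^*\in V^{\theta*}$ directly. Then $\theta(1)xx^*=(xr^*)(xr^*)^*$ is a single product, and the proof finishes because such positive elements span $\theta(1)\theta(N)'$.

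\textbf{Your route.} You keep $K$ fixed, dilate into $K\otimes L$, and slice back down with $1\otimes\langle\eta|$ to produce elements of $V^\theta$. This lets you write an arbitrary $x\in e\theta(N)'$ as a weak$^*$-convergent sum $\sum_i v_{\eta_i}^*\big((1\otimes\langle\eta_i|)wxw^*w\big)$. In effect you are unwinding by hand the slicing that underlies Remark~\ref{rem:changing_base_spaces}.

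\textbf{What each buys.} Your argument is self-contained and does not depend on the base-change machinery, at the cost of a routine approximation step. The paper's argument is a two-line computation once invariance under change of representation is available.

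The step you flag as the main obstacle is not actually needed. That $wxw^*$ commutes with $N\otimes 1$ follows directly from $w\theta(y)=(y\otimes 1)w$ and its adjoint $\theta(y)w^*=w^*(y\otimes 1)$, since
$(y\otimes 1)wxw^*=w\theta(y)xw^*=wx\theta(y)w^*=wxw^*(y\otimes 1)$.
No statement about $ww^*$ is required.
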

\begin{proof}
By Proposition~\ref{prop:changing_base_space} we can change the space $N$ acts on without changing $V^{\theta *}\circ V^\theta$.  So by replacing $K$ by $K\otimes L$ for a suitable $L$, we may suppose that there is $r\colon H \to K$ with $r \theta(y) = yr$ for each $y\in N$, and with $r^*r = \theta(1) \in M$.  Given $x\in \theta(N)'$ set $u = xr^* \colon K \to H$.  For $y\in N$ we have $uy = xr^*y = x \theta(y) r^* = \theta(y) xr^* = \theta(y)u$, and so $u^*\in V^\theta$.  As $uu^* = xr^*rx^* = x \theta(1) x^* = \theta(1) xx^*$, and as $V^{\theta*} \circ V^{\theta}$ is a linear space, we conclude that  $\theta(1) \theta(N)' \subseteq V^{\theta*} \circ V^{\theta}$.

For the other inclusion, let $v\in V^\theta$ so $v^*y = \theta(y)v^*$ for $y\in N$, and hence $v^* = \theta(1)v^*$.  For $u,v\in V^\theta$ we have $v^*u\theta(y) = v^* y u = \theta(y) v^*u$ and so $v^*u \in \theta(N)'$, hence $v^*u = \theta(1)v^*u \in\theta(1)\theta(N)'$.
\end{proof}

\begin{corollary}\label{corr:density_cond}
For a normal $*$-homomorphism $\theta\colon N\to M$ we have that $\overline\lin\{ v^*\xi : v\in V^\theta, \xi\in K\} = \theta(1)(H)$.
\end{corollary}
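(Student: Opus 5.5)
Let $X_0 = \overline\lin\{ v^*\xi : v\in V^\theta, \xi\in K\}$. The plan is to prove the two inclusions $X_0 \subseteq \theta(1)(H)$ and $\theta(1)(H) \subseteq X_0$ separately, using Lemma~\ref{lem:com_theta_image} for the harder one.

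For $X_0 \subseteq \theta(1)(H)$: in the proof of Lemma~\ref{lem:com_theta_image} we saw that every $v\in V^\theta$ satisfies $v^* = \theta(1)v^*$. This comes from putting $y=1$ in $v^*y = \theta(y)v^*$. Hence each $v^*\xi$ lies in $\theta(1)(H)$. That space is closed, since $\theta(1)$ is a projection, so $X_0 \subseteq \theta(1)(H)$.

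For $\theta(1)(H) \subseteq X_0$: I will use Lemma~\ref{lem:com_theta_image}, which says $\theta(1)\theta(N)' = V^{\theta*}\circ V^\theta$. Since $1\in\theta(N)'$, we get $\theta(1) \in V^{\theta*}\circ V^\theta$, which is the weak$^*$-closed linear span of the products $v^*u$ with $u,v\in V^\theta$. Now suppose $\xi_0 \in H$ is orthogonal to $X_0$.
\begin{itemize}
\item For all $u,v\in V^\theta$ and $\eta\in H$ we have $(\xi_0|v^*u\eta) = 0$, because $u\eta\in K$.
\item The functional $z\mapsto(\xi_0|z\eta)$ is weak$^*$-continuous and linear, so it vanishes on all of $V^{\theta*}\circ V^\theta$.
\item In particular $(\xi_0|\theta(1)\eta)=0$ for every $\eta\in H$.
\end{itemize}
Thus $\xi_0 \perp \theta(1)(H)$, so $\theta(1)(H) \subseteq X_0^{\perp\perp} = X_0$. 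This gives the equality.

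Both steps are routine. The only real content is Lemma~\ref{lem:com_theta_image}, which we may assume. The one point to check is that Lemma~\ref{lem:com_theta_image} holds for the original $H,K$ and not only after enlarging $K$. It does: its proof notes that $V^{\theta*}\circ V^\theta$ is unchanged under the change of space, by Proposition~\ref{prop:changing_base_space}, so the lemma applies to the $V^\theta$ in the statement.
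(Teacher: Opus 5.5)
Your proof is correct and follows the same route as the paper: the inclusion into $\theta(1)(H)$ comes from $v^*=\theta(1)v^*$, and the reverse inclusion comes from Lemma~\ref{lem:com_theta_image}. Your orthogonal-complement argument makes explicit the weak$^*$-closure step that the paper leaves implicit when it writes $\overline\lin V^{\theta*}K \supseteq \lin(V^{\theta*}\circ V^\theta)H$, and you only use $\theta(1)\in V^{\theta*}\circ V^\theta$ rather than invariance of $\theta(1)(H)$ under $\theta(N)'$.
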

\begin{proof}
By Lemma~\ref{lem:com_theta_image}, we have that $\overline\lin V^{\theta *}K \supseteq \lin (V^{\theta*}\circ V^\theta)H = \lin \theta(1)\theta(N)'H = \theta(1)(H)$ as $\theta(N)'$ leaves $\theta(1)(H)$ invariant.  Conversely, for any $v\in V^\theta$ we have that $v^* = v^*1 = \theta(1)v^*$ and so $\lin V^{\theta *}K \subseteq \theta(1)(H)$.
\end{proof}

For any coinjective $V$, as $V\circ V^* \subseteq N'$, and always $V\circ V^*$ is an $N'$-bimodule, we have that $V\circ V^*$ is a weak$^*$-closed ideal, and so $V\circ V^* = zN'$ for some central projection $z\in N\cap N'$, \cite[Proposition~II.3.12]{TakesakiI}.

\begin{lemma}\label{lem:kernal_theta}
Let $\theta \colon N \to M$ be a normal $*$-homomorphism, and let $z\in N\cap N'$ be the central projection with $V^\theta \circ V^{\theta *} = zN'$.  Then $\ker(\theta) = (1-z)N$.
\end{lemma}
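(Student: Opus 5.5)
Since $\ker(\theta)$ is a weak$^*$-closed ideal of $N$, it equals $(1-p)N$ for a unique central projection $p\in N\cap N'$, namely the central projection with $\theta$ injective on $pN$ and $\theta(1-p)=0$. The plan is to show $p=z$ by proving two inclusions.

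\emph{Step 1: $z\leq p$, i.e.\ $\theta(1-p)=0$ forces $(1-p)(V^\theta\circ V^{\theta*})=0$.} Let $v\in V^\theta$. Then $(1-p)v = v\theta(1-p) = 0$. Hence $(1-p)vu^*=0$ for all $u,v\in V^\theta$, and by weak$^*$-continuity $(1-p)zN'=0$. Taking the element $1\in N'$ gives $(1-p)z=0$.

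\emph{Step 2: $p\leq z$, i.e.\ $\theta(1-z)=0$.} For $v\in V^\theta$ we have $z v = v$. This holds because $vv^*\in zN'$, so $(1-z)vv^*(1-z)=0$, hence $(1-z)v=0$. Then
\[ v\theta(1-z) = (1-z)v = 0 \qquad (v\in V^\theta). \]
Taking adjoints, $\theta(1-z)v^*=0$, so $\theta(1-z)$ vanishes on $\overline\lin\{v^*\xi : v\in V^\theta,\ \xi\in K\}$. By Corollary~\ref{corr:density_cond} this closed span is $\theta(1)(H)$. Since $\theta(1-z)=\theta(1-z)\theta(1)$, we conclude $\theta(1-z)=0$. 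Therefore $(1-z)N\subseteq\ker\theta$, i.e.\ $1-z\leq 1-p$.

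Combining the two steps gives $z=p$, which is exactly the statement $\ker(\theta)=(1-z)N$. The only point needing care is the identification of $\ker\theta$ with a central cutdown. This is standard: $\ker\theta$ is a weak$^*$-closed two-sided ideal because $\theta$ is normal, so \cite[Proposition~II.3.12]{TakesakiI} applies. With that in hand, Step 2 is the substantive step, and it rests entirely on Corollary~\ref{corr:density_cond}.
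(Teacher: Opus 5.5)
Your proof is correct and rests on the same ingredients as the paper's: the intertwining relation $yv=v\theta(y)$, the $C^*$-identity (turning $vv^*\in zN'$ into $(1-z)v=0$), the description $V^\theta\circ V^{\theta*}=zN'$, and Corollary~\ref{corr:density_cond} to get from $v\theta(y)=0$ for all $v\in V^\theta$ back to $\theta(y)=0$. The difference is only in how the argument is organised: you first write $\ker\theta=(1-p)N$ using the structure theorem for weak$^*$-closed ideals and then prove $z\le p$ and $p\le z$, whereas the paper runs the chain $y\in\ker\theta\iff yv=0\ (v\in V^\theta)\iff yzN'=\{0\}\iff yz=0$ for an arbitrary $y\in N$, so it never needs to know in advance that $\ker\theta$ is a central cutdown.
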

\begin{proof}
If $\theta(y)=0$ then for $v\in V^\theta$ we have $yv = v\theta(y)=0$.  Conversely, suppose $\theta(y)\not=0$, so also $\theta(y^*)\theta(1) = \theta(y)^* \not=0$, and so by Corollary~\ref{corr:density_cond}, there is $v\in V^\theta$ with $\theta(y)^*v^*\not=0$.  We conclude that $\theta(y)=0$ if and only if $yv = v\theta(y)=0$ for every $v\in V^\theta$.

So $y\in\ker\theta$ implies that $yvu^* = 0$ for all $u,v\in V^\theta$, while conversely, if in particular $y v v^* = 0$ for all $v$, then $y v v^* y^* = 0$ for all $v$, so $yv=0$ for all $v$, so $\theta(y)=0$.  Thus $y\in\ker\theta$ if and only if $y zN' = y (V^{\theta}\circ V^{\theta^*}) = \{0\}$, which is equivalent to $yz=0$, which is equivalent to $y \in (1-z)N$ as claimed.
\end{proof}

We now state our main result, which generalises various equivalences between relations and homomorphisms from \cite{Kornell_QuantumSets} to arbitrary von Neumann algebras.

\begin{theorem}\label{thm:funcs_to_HMs}
There is a bijection between normal $*$-homomorphisms $\theta \colon N \to M$ and coinjective quantum relations $V$ from $M$ to $N$, given by $\theta \mapsto V^\theta$.  Under the bijection:
\begin{enumerate}[(1)]
\item\label{thm:funcs_to_HMs:one}
   unital homomorphisms correspond to quantum functions;
\item\label{thm:funcs_to_HMs:two}
   injective homomorphisms correspond to surjective quantum relations;
\item\label{thm:funcs_to_HMs:three}
   $V$ is injective if and only if $\theta(1)$ is central in $M$, and $\theta(N) = \theta(1)M$;
\item\label{thm:funcs_to_HMs:four}
   for quantum functions, surjective homomorphisms correspond to injective quantum functions.
\end{enumerate}
\end{theorem}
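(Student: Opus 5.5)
The plan is to first establish the bijection, reducing the non-unital case to the unital one, and then read off (1)–(4) from Lemmas~\ref{lem:com_theta_image} and~\ref{lem:kernal_theta} and Corollary~\ref{corr:density_cond}.

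\emph{Injectivity of $\theta\mapsto V^\theta$.} Suppose $V^\theta=V^\phi$. By Corollary~\ref{corr:density_cond} the projections agree: $\theta(1)(H)=\overline\lin V^{\theta*}K=\phi(1)(H)$. For $y\in N$ and $v\in V^\theta$ we have $\theta(y)v^*=v^*y=\phi(y)v^*$. Hence $\theta(y)$ and $\phi(y)$ agree on $\theta(1)H$. Both vanish on its complement, since $\theta(y)=\theta(y)\theta(1)$, so $\theta=\phi$.

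\emph{Surjectivity.} Let $V$ be coinjective, with $V\circ V^*=zN'$ for a central projection $z\in N\cap N'$. Let $p$ be the projection onto $H_0=\overline\lin V^*K$. Then $H_0$ is invariant under $M'$, because $M'V^*\subseteq V^*$, so $p\in M''=M$. Moreover $V^*=zV^*$ up to adjoints, i.e.\ $v=zv$, since $vv^*\in zN'$. Thus $V^*$ is a weak$^*$-closed Hilbert module over $zN'$, sitting in $\mc B(zK,H_0)$ with dense range. The algebra $zN'\subseteq\mc B(zK)$ has commutant $zN$.

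Apply Theorem~\ref{thm:SG} with $N_0=zN'$. This gives a unital normal $*$-homomorphism $\theta_0\colon zN\to\mc B(H_0)$ with $V^*=\{x: xa=\theta_0(a)x\}$. The argument of Corollary~\ref{corr:kornells_bijec} shows that $\theta_0$ commutes with $pM'p=M'|_{H_0}$, so $\theta_0$ lands in $pMp$. Now define $\theta(y)=\theta_0(zy)$, viewed in $M$. This is a normal $*$-homomorphism $N\to M$ with $\theta(1)=p$. It remains to check $V=V^\theta$. An element $v\in\mc B(H,K)$ lies in $V^\theta$ iff $yv=v\theta(y)$ for all $y$. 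Such $v$ satisfies $v=vp$ and $(1-z)v=0$ (take $y=1-z$, which has $\theta(y)=0$). Hence $V^\theta$ is exactly the set produced by Theorem~\ref{thm:SG}, corner-embedded. I expect this bookkeeping of corners and non-unital pieces to be the main obstacle. In particular, showing $V\subseteq zK$-range, and that elements of $V^\theta$ automatically live in the correct corner, needs care.

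\emph{Properties.}
\begin{enumerate}[(1)]
\item By Lemma~\ref{lem:com_theta_image}, $V^{\theta*}\circ V^\theta=\theta(1)\theta(N)'$. This contains $M'$ iff $\theta(1)=1$: if $\theta(1)=1$ then $M'\subseteq\theta(N)'$, and conversely $1\in\theta(1)\theta(N)'$ forces $\theta(1)=1$.
\item By Lemma~\ref{lem:kernal_theta}, $\theta$ is injective iff $z=1$, i.e.\ iff $V\circ V^*=N'$, which is surjectivity.
\item Injectivity means $\theta(1)\theta(N)'\subseteq M'$. If $\theta(1)$ is central and $\theta(N)=\theta(1)M$, then $\theta(N)'=\theta(1)M'\oplus(1-\theta(1))\mc B(H)$, and cutting down by $\theta(1)$ gives $\theta(1)M'\subseteq M'$. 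Conversely, suppose $\theta(1)\theta(N)'\subseteq M'$. The element $\theta(1)$ lies in it and hence in $M'\cap M$, so it is central. Then $\theta(1)\theta(N)'\subseteq\theta(1)M'$, and the reverse inclusion is automatic. Taking commutants inside $\mc B(\theta(1)H)$ via the bicommutant theorem yields $\theta(N)=\theta(1)M$.
\item This combines (1) and (3) with $\theta(1)=1$.
\end{enumerate}
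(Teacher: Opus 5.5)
Your proposal is correct and follows essentially the paper's route: corestrict $V^*$ to its closed range, apply Theorem~\ref{thm:SG} with the commutant argument of Corollary~\ref{corr:kornells_bijec}, and read off (1)--(4) from Lemmas~\ref{lem:com_theta_image} and~\ref{lem:kernal_theta}. The differences are minor. You prove injectivity of $\theta\mapsto V^\theta$ directly from Corollary~\ref{corr:density_cond} rather than via the uniqueness clause of Theorem~\ref{thm:SG}. Your extra cut-down to $zK$ is harmless but unnecessary: applying Theorem~\ref{thm:SG} with $N_0=N'$ already gives a homomorphism on all of $N$, which automatically vanishes on $(1-z)N$.
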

\begin{proof}
Proposition~\ref{prop:defn_V_theta} shows that $V^\theta$ is always coinjective.  Now let $V$ be an arbitrary coinjective quantum relation, and let $H_e = \overline{\lin}\{ v^*\xi : \xi\in K, v\in V \} \subseteq H$, and let $e$ be the projection of $H$ onto $H_e$.  As $V^*$ is a left $M'$-module, it follows that $H_e$ is an $M'$-invariant subspace, and so $e\in M$.  By corestriction, every element of $V^*$ is a map in $\mc B(K,H_e)$, say giving $U \subseteq \mc B(K, H_e)$ which is a left $N'$-module.  Set $M_e = eMe \subseteq \mc B(H_e)$ a von Neumann algebra, so by \cite[Proposition~II.3.10]{TakesakiI} we have that $(M_e)' = (M')_e = eM' \subseteq \mc B(H_e)$.  Thus $U$ is a right $(M_e)'$-module because $V$ is a right $M'$-module.  Then Theorem~\ref{thm:SG} (applied as in the proof of Corollary~\ref{corr:kornells_bijec}) shows that there is a unique normal unital $*$-homomorphism $\theta_e \colon N \to M_e$ with $V^{\theta_e *} = U$.  Let $\theta$ be $\theta_e$ composed with the inclusion $M_e \to M$, so in particular, $\theta(1) = e$.  Then $v\in V^{\theta*}$ if and only if $vy = \theta(y) v = \theta_e(y) ev$ for $y\in N$.  So firstly $v = v1 = ev$, and secondly we see that $v\in V^{\theta *}$ if and only if $ev \in U$.  So $V^* = V^{\theta *}$.

Conversely, starting with $\theta$, set $e=\theta(1)$.  Every $v\in V^{\theta *}$ satisfies that $v=v1=ev$, and Corollary~\ref{corr:density_cond} shows that $H_e = e(H) = \overline\lin V^{\theta *}K$.  By the uniqueness clause of Theorem~\ref{thm:SG}, if we construct $\theta_e$ from $V^{\theta *}$ as in the previous paragraph, then $\theta_e(y) = e \theta(y) e$ for each $y\in N$, and so $\theta$ is exactly $\theta_e$ composed with the inclusion $M_e\to M$.  This shows the claimed bijection.

To show \ref{thm:funcs_to_HMs:one}, we have to show that when $\theta$ is unital, $V^\theta$ is cosurjective (the other implication is already shown in Corollary~\ref{corr:kornells_bijec}), which by Lemma~\ref{lem:com_theta_image} is equivalent to $\theta(N)' \supseteq M'$, but this is clear as $\theta(N) \subseteq M$.

Let $z$ be the central projection with $\ker(\theta) = (1-z)N$ as in Lemma~\ref{lem:kernal_theta}, so $V\circ V^* = zN'$.  Then $\theta$ is injective if and only if $z=1$ if and only if $V\circ V^*=N'$ if and only if $V$ is surjective (as always $V\circ V^* \subseteq N'$ here).  This shows \ref{thm:funcs_to_HMs:two}.

By Lemma~\ref{lem:com_theta_image}, $V^*\circ V = \theta(1)\theta(N)'$.  If $V$ is injective, then $\theta(1) \theta(N)' \subseteq M'$, so $\theta(1) \in M'$ and hence $e=\theta(1)$ is central.  As $\theta(N) \subseteq M$, we have $M' \subseteq \theta(N)'$ and so $e\theta(N)' \subseteq M'$ means $eM' \subseteq e\theta(N)' = e^2\theta(N)' \subseteq eM'$ and so we have equality.  Inside $\mc B(e(H))$, again by \cite[Proposition~II.3.10]{TakesakiI}, we have $(eM)' = eM' = e\theta(N)' = (e\theta(N))'$ and so $eM = e\theta(N) = \theta(N)$, showing the ``only if'' clause of \ref{thm:funcs_to_HMs:three}.  Conversely, if $e$ is central and $\theta(N) = eM$ then $e\theta(N)' = (e\theta(N))' = (eM)' = eM' = eM'e \subseteq M'$ and so $V$ is injective.

Finally, if $V$ is quantum function, then $\theta$ is unital, and so \ref{thm:funcs_to_HMs:three} gives that $V$ is injective if and only if $\theta$ is surjective, showing \ref{thm:funcs_to_HMs:four}.
\end{proof}

Finally, we consider composition in either category; compare \cite[Section~4]{kornell2015quantumfunctions} and \cite[Section~7]{Kornell_QuantumSets}.

\begin{proposition}\label{prop:hm_to_qr_composition}
For normal $*$-homomorphisms $\theta_1 \colon M_1 \to M_2$ and $\theta_2 \colon M_2 \to M_3$, we have that $V^{\theta_1} \circ V^{\theta_2} = V^{\theta_2 \circ \theta_1}$.
\end{proposition}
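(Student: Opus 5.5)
The plan is to prove the two inclusions separately, working directly from the definition of $V^\theta$ and using Lemma~\ref{lem:com_theta_image} for the harder one; no change of base space should be needed. Fix $M_i\subseteq\mc B(H_i)$. Then $V^{\theta_1}\subseteq\mc B(H_2,H_1)$ is a relation from $M_2$ to $M_1$, $V^{\theta_2}\subseteq\mc B(H_3,H_2)$ is a relation from $M_3$ to $M_2$, and both sides of the claimed identity lie in $\mc B(H_3,H_1)$.

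\emph{The inclusion $V^{\theta_1}\circ V^{\theta_2}\subseteq V^{\theta_2\circ\theta_1}$.} This is routine. For $v\in V^{\theta_1}$, $w\in V^{\theta_2}$ and $y\in M_1$ we have
\[ yvw = v\theta_1(y)w = vw\,\theta_2(\theta_1(y)), \]
so $vw\in V^{\theta_2\circ\theta_1}$. The right-hand side is a weak$^*$-closed subspace, so it contains the weak$^*$-closed span of such products.

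\emph{The reverse inclusion.} Let $x\in V^{\theta_2\circ\theta_1}$ and $w\in V^{\theta_2}$. Taking adjoints in $zw=w\theta_2(z)$ gives $w^*z=\theta_2(z)w^*$ for $z\in M_2$. Hence, for $y\in M_1$,
\[ y\,(xw^*) = x\,\theta_2(\theta_1(y))\,w^* = (xw^*)\,\theta_1(y), \]
so $xw^*\in V^{\theta_1}$. Consequently $x\,w^*w'\in V^{\theta_1}\circ V^{\theta_2}$ for all $w,w'\in V^{\theta_2}$.

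By Lemma~\ref{lem:com_theta_image}, the weak$^*$-closed span of such $w^*w'$ is $V^{\theta_2*}\circ V^{\theta_2}=\theta_2(1)\theta_2(M_2)'$, and this contains $\theta_2(1)$. Left multiplication by $x$ is weak$^*$-continuous and the composition is a weak$^*$-closed subspace. Approximating $\theta_2(1)$ by finite sums of products $w^*w'$ therefore gives $x\theta_2(1)\in V^{\theta_1}\circ V^{\theta_2}$.

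Finally, taking $y=1$ in the defining relation gives $x=x\,\theta_2(\theta_1(1))$. Since $\theta_2(\theta_1(1))\le\theta_2(1)$, we get $x=x\theta_2(1)$, which finishes the proof.

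The main point to watch is the non-unital case, where one must check that $x$ is unchanged by right multiplication by $\theta_2(1)$; the relation $x=x\,\theta_2(\theta_1(1))$ handles this. Otherwise the key input is only that $\theta_2(1)$ lies in $V^{\theta_2*}\circ V^{\theta_2}$, which Lemma~\ref{lem:com_theta_image} supplies.
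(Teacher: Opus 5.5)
Your proof is correct, and it takes a genuinely different route from the paper. The paper does not argue directly; it calls a direct argument tedious. Instead it deduces the statement from two results:
\begin{itemize}
\item Proposition~\ref{prop:cp_to_qr_composition}, the functoriality of $\theta\mapsto V^\theta$ for arbitrary CP maps. This is proved by composing dilations $u=(u_1\otimes 1)u_2$ and expanding the generators $y'(1\otimes\langle\xi\otimes\eta|)u$.
\item Proposition~\ref{prop:rel_from_HM_is_from_CP}, which shows that for a $*$-homomorphism the dilation-defined relation coincides with the intertwiner space.
\end{itemize}

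Your route stays entirely within Section~\ref{sec:quantum_functions}. The forward inclusion is immediate from the intertwining relations. For the reverse inclusion, three facts do all the work, including in the non-unital case:
\begin{itemize}
\item $xw^*\in V^{\theta_1}$ for every $w\in V^{\theta_2}$;
\item $\theta_2(1)\in V^{\theta_2*}\circ V^{\theta_2}$, by Lemma~\ref{lem:com_theta_image};
\item $x=x\,\theta_2(\theta_1(1))=x\,\theta_2(1)$.
\end{itemize}

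Each approach buys something different. Yours is a short, self-contained proof that avoids Theorem~\ref{thm:UCP_to_QR}, and hence also the bijection of Theorem~\ref{thm:funcs_to_HMs}, which the dilation-independence of $V^\theta$ (and so Proposition~\ref{prop:rel_from_HM_is_from_CP}) relies on. The paper's gives the more general CP statement. There $V^\theta$ has no intertwining description, so your step $xw^*\in V^{\theta_1}$ has no analogue.

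One small caveat: your claim that no change of base space is needed holds only at the surface. The proof of Lemma~\ref{lem:com_theta_image} itself enlarges the space that $N$ acts on, in order to produce an intertwiner $r$ with $r^*r=\theta(1)$. This is harmless, since that lemma is proved independently of the present proposition.
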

\begin{proof}
One can show this directly, but the argument is tedious.  Instead, this will follow from the generalisation Proposition~\ref{prop:cp_to_qr_composition} below, taking account of Proposition~\ref{prop:rel_from_HM_is_from_CP}.
\end{proof}

When a quantum relation $V$ is both coinjective and injective, also $V^*$ has these properties, and so there is a normal $*$-homomorphism $\theta \colon N \to M$ with $V=V^\theta$, and a normal $*$-homomorphism $\theta^\star \colon M \to N$ with $V^{\theta^\star} = V^*$.  We now describe $\theta^\star$.  Let $z\in N\cap N'$ be the central projection with $\ker(\theta) = (1-z)N$, see Lemma~\ref{lem:kernal_theta}, and recall that $\theta(1)\in M$ is also a central projection, Theorem~\ref{thm:funcs_to_HMs}\ref{thm:funcs_to_HMs:three}, with $\theta(N) = \theta(1)M$.  Then $\theta$ restricts to a $*$-isomorphism $zN \to \theta(N) = \theta(1)M$.  Let $\theta^{-1}$ be the inverse, and define $\theta^\star(x) = \theta^{-1}(\theta(1)x) \in zN \subseteq N$ for each $x\in M$, so $\theta^\star$ is a normal $*$-homomorphism.

\begin{proposition}\label{prop:star_is_inverse}
Let $V$ be both injective and coinjective, and let $\theta^\star$ as above.  Then $V^* = V^{\theta^\star}$.
\end{proposition}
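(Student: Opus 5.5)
The plan is to prove the two inclusions $V^*\subseteq V^{\theta^\star}$ and $V^{\theta^\star}\subseteq V^*$ directly from the defining intertwining relations. Here $V=V^\theta$ by Theorem~\ref{thm:funcs_to_HMs}, so
\[ V^* = \{ u\in\mc B(K,H) : uy = \theta(y)u \ (y\in N)\}, \qquad V^{\theta^\star} = \{ w\in\mc B(K,H) : xw = w\theta^\star(x) \ (x\in M)\}. \]
Write $e=\theta(1)$, which is central in $M$ by Theorem~\ref{thm:funcs_to_HMs}\ref{thm:funcs_to_HMs:three}, and let $z$ be the central projection of Lemma~\ref{lem:kernal_theta}. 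I will first record three identities.
\begin{itemize}
\item $\theta^\star(1)=\theta^{-1}(e)=z$, since $\theta$ restricted to $zN$ is unital onto $eM$ with $\theta(z)=e$.
\item $\theta^\star(\theta(y)) = \theta^{-1}(\theta(y)) = zy$ for $y\in N$, because $\theta(y)=\theta(zy)$, as $(1-z)y\in\ker\theta$.
\item For $x\in M$ we have $ex=\theta(\theta^\star(x))$, by the definition of $\theta^\star$ and since $\theta(N)=eM$.
\end{itemize}

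For $V^*\subseteq V^{\theta^\star}$, take $u\in V^*$. Taking $y=1$ in the intertwining relation gives $u=eu$. Then for $x\in M$, using that $e$ is central,
\[ xu = xeu = (ex)u = \theta(\theta^\star(x))u = u\,\theta^\star(x). \]
The last step is the intertwining relation for $u$ applied to $y=\theta^\star(x)\in N$. Hence $u\in V^{\theta^\star}$.

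For $V^{\theta^\star}\subseteq V^*$, take $w\in V^{\theta^\star}$. Taking $x=1$ gives $w=w\theta^\star(1)=wz$. For $y\in N$, apply the relation with $x=\theta(y)$ to get
\[ \theta(y)w = w\,\theta^\star(\theta(y)) = wzy = wy, \]
using $w=wz$ in the last step. Thus $w\in V^*$, and the two inclusions give equality.

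The only point needing care is the bookkeeping around the projections $e$ and $z$. Specifically, I need to check that $\theta$ maps $zN$ isomorphically onto $eM$ with $\theta(z)=e$, which follows from Lemma~\ref{lem:kernal_theta} and Theorem~\ref{thm:funcs_to_HMs}\ref{thm:funcs_to_HMs:three}. Once that is in place, both inclusions are one-line computations. Alternatively, one could invoke the uniqueness in the bijection of Theorem~\ref{thm:funcs_to_HMs} applied to the coinjective relation $V^*$, but the direct argument avoids this.
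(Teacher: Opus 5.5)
Your proof is correct and follows the paper's argument essentially line for line. Both inclusions come from specialising the intertwining relations twice. First at the unit, giving $u=\theta(1)u$ and $w=wz$. Then at $y=\theta^\star(x)$, respectively $x=\theta(y)$, using that $\theta(1)$ is central and that $\theta(y)=\theta(zy)$. Collecting the three identities for $\theta^\star$ up front is only a tidier packaging of the same computation.
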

\begin{proof}
Starting with the definition, we have the following chain of implications:
\begin{align*}
v \in V^{\theta^\star} 
\quad&\iff\quad   xv = v \theta^\star(x)  = v \theta^{-1}(\theta(1)x)  \qquad (x\in M) \\
&\implies\quad   \theta(y) v = v y \qquad (y=yz\in N).
\end{align*}
Notice that $\theta^\star(1) = \theta^{-1}(\theta(1)) = z$ and so $v\in V^{\theta^\star}$ implies that $v = 1v = v \theta^\star(1) = vz$.  So for $y\in N$, as $\theta(y) = \theta(zy)$, we see that $\theta(y)v = \theta(zy)v = vzy = vy$ for any $y\in N$, that is, $v^* \in V^\theta = V$.

Conversely, given $v\in V^*$ we have that $\theta(y)v = vy$ for each $y\in N$.  In particular, $v = \theta(1)v$.  Given $x\in M$, set $y = \theta^{-1}(\theta(1)x) \in zN$, so $\theta(y) = x\theta(1)$, and hence $xv = x\theta(1)v = \theta(y) v = v y = v \theta^\star(x)$.  So $v\in V^{\theta^\star}$ and $V^* = V^{\theta^\star}$ as claimed.
\end{proof}

We finish by making some remarks about invertible relations, following \cite[Section~4]{Kornell_QuantumSets}.

\begin{proposition}
Let $U$ be a partial function which is invertible.  Then $U^*$ is the inverse of $U$, and so $U$ is a function which is both injective and surjective.
\end{proposition}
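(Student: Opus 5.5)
We work in the category of quantum relations, where the identity on $M$ is $M'$. Let $W$ be a relation from $N$ to $M$ with $W\circ U = M'$ and $U\circ W = N'$; here $U$ is from $M$ to $N$ and is coinjective, so $U\circ U^*\subseteq N'$. The plan is to mimic the classical argument that an invertible partial function has its converse as its inverse. We show $W\subseteq U^*$ and $U^*\subseteq W$ by bracketing with the order, using that composition is monotone for inclusion (immediate from the definition as a weak$^*$-closed span). The main obstacle is that we have no a priori information about $W$, so the only available inequality is the coinjectivity $U\circ U^*\subseteq N'$.

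\textbf{First step: $U^* \subseteq W$.} Using associativity and the identity laws,
\[ U^* = M'\circ U^* = (W\circ U)\circ U^* = W\circ (U\circ U^*) \subseteq W\circ N' = W, \]
where the final equality uses that $W$ is a bimodule and $1\in N'$.

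\textbf{Second step: cosurjectivity and surjectivity of $U$.} From the first step and monotonicity,
\[ M' = W\circ U \supseteq U^*\circ U, \qquad N' = U\circ W \supseteq U\circ U^*. \]
These inclusions alone do not yet give equality. To obtain the reverse inclusions, I would compute
\[ U\circ U^* \supseteq U\circ (U^*\circ U)\circ U^* \]
and try to show this contains $N'$. Rather than pursuing that directly, I would show $W\subseteq U^*$ another way.

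\textbf{Third step: $W \subseteq U^*$, the hard step.} Since $U^*\subseteq W$, we have $U^*\circ U\subseteq W\circ U = M'$, so $U$ is also injective. Next,
\[ W = W\circ N' \supseteq W\circ U\circ U^* = M'\circ U^* = U^*, \]
which is consistent but gives nothing new. Instead, compute
\[ W = M'\circ W = (W\circ U)\circ W = W\circ (U\circ W) = W\circ N', \]
which is again uninformative, so the plan is to pass to $*$-homomorphisms. By Theorem~\ref{thm:funcs_to_HMs}, $U = V^\theta$ with $\theta$ normal. Since $U$ is injective, Theorem~\ref{thm:funcs_to_HMs}\ref{thm:funcs_to_HMs:three} and Lemma~\ref{lem:com_theta_image} give $U^*\circ U = \theta(1)M'$, with $\theta(1)$ central. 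Now
\[ M' = W\circ U = W\circ U\circ U^*\circ U \subseteq W\circ N'\circ U \ldots \]
More directly,
\[ M' = W\circ U = W\circ (U\circ W)\circ U \ldots \]
but the cleanest route is to note $\theta(1)M' \supseteq U^*\circ U$ and
\[ U = U\circ M' = U\circ W\circ U \supseteq \ldots \]
Instead, take the projection $e=\theta(1)\in M'$. Every $u\in U$ satisfies $u=ue$, so $U\circ(1-e)M' = 0$. Then
\[ (1-e)M' = (1-e)(W\circ U) = (W\circ U)(1-e) = W\circ (U(1-e)) = 0, \]
where the middle equality holds because $e$ is central. Hence $e=1$, so $\theta$ is unital and $U^*\circ U = M'$: $U$ is a function and injective. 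Symmetrically, letting $z$ be as in Lemma~\ref{lem:kernal_theta}, we have $U\circ U^* = zN'$ and $U^* = U^*z$, so
\[ (1-z)N' = (1-z)(U\circ W) = \ldots, \]
and we need $(1-z)U = 0$. This holds because $u=uu^*$-type elements lie in $zN'$; precisely, the left support of $U$ is contained in $z$, since $U\circ U^*=zN'$. Thus $(1-z)N'=0$, so $z=1$ and $U$ is surjective. Finally, $U^*$ is a two-sided inverse of $U$, and inverses are unique: $W = W\circ N' = W\circ U\circ U^* = M'\circ U^* = U^*$.
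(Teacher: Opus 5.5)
Your argument is correct, but it takes a different route from the paper's.

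\textbf{The paper's route.} The paper never leaves the calculus of relations. It writes $U\circ U^*=zN'$ for a central projection $z$. It then computes $V^*\circ V\circ U\circ U^*$ in two ways: it equals $(U\circ V)^*=N'$, and it lies in $\mathcal{B}(K)z$. This forces $z=1$. Cosurjectivity then follows from the one-line sandwich
\[ U^*\circ U=V\circ(U\circ U^*)\circ U=V\circ N'\circ U=M'. \]

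\textbf{What you do differently.} Your first step, $U^*\subseteq W$ via $U^*=(W\circ U)\circ U^*\subseteq W\circ N'=W$, is a nice observation the paper does not make; it gives injectivity for free. Your proof that $z=1$ is a good alternative to the paper's detour through $V^*$: $(1-z)uu^*=0$ forces $(1-z)U=0$, and then $1\in U\circ W$.

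\textbf{An unnecessary detour.} Your route to $U^*\circ U=M'$ invokes Theorem~\ref{thm:funcs_to_HMs}, and with it the Hilbert $C^*$-module machinery behind it. This is not needed. If you do the $z$-argument first (it uses only coinjectivity), the sandwich above with $W$ in place of $V$ finishes immediately. Alternatively, use your own injectivity: $U^*\circ U\subseteq M'$ is a weak$^*$-closed ideal of $M'$, hence equals $eM'$ with $e$ central. The same support argument gives $U(1-e)=0$, and $1\in W\circ U$ forces $e=1$. That makes the proof symmetric and elementary.

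Even within your $\theta$-route, the centrality of $\theta(1)$ is irrelevant. Every $x\in W\circ U$ satisfies $x=x\theta(1)$, and $1\in W\circ U$, so $\theta(1)=1$ directly.

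\textbf{Presentation.} In a write-up, delete the abandoned computations that end in $\ldots$. Also delete the ``Second step'', which does not establish what its heading announces.
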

\begin{proof}
Let $U$ from $M$ to $N$ with inverse $V$ from $N$ to $M$.  This means that $V \circ U = M'$ and $U\circ V = N'$.
By hypothesis, $U \circ U^* \subseteq N'$, but as before, as $U\circ U^*$ is also a $N'$-bimodule, it forms a weak$^*$-closed ideal in $N'$ and hence equals $zN'$ for some central projection $z$.  Then $V^*\circ V \circ U \circ U^* = V^* \circ M' \circ U^* = V^*\circ U^* = (U\circ V)^* = N'$ but also $V^*\circ V \circ U \circ U^* = V^* \circ V \circ N'z = V^* \circ (Vz) \subseteq \mc B(K)z$ and so necessarily $z=1$ (as $N'$ is unital, for example).

So $U\circ U^* = N'$.  Then $U^*\circ U = V \circ U \circ U^* \circ U = V \circ N' \circ U = V\circ U = M'$.  So $U$ has inverse $U^*$ and so $U$ is a function, both injective and surjective.
\end{proof}

As remarked after \cite[Proposition~4.7]{Kornell_QuantumSets}, we can certainly have invertible relations which are not partial functions.

\section{Channels to relations}\label{sec:channels_to_relations}

We now study a notion similar to ``quantum confusability graphs''.  We begin with motivation from the classical situation. 

\begin{example}\label{eg:classical_channel}
A noisy communication channel from a (finite) set $X$ to $Y$ can be modelled as a conditional probability distribution $p(y|x)$.  So $p(y|x)\in [0,1]$ and $\sum_{y\in Y} p(y|x) = 1$ for each $x\in X$.  From this we obtain a UCP map $\theta \colon \ell^\infty(Y) \to \ell^\infty(X)$;
\[ \theta(f)(x) = \sum_{y\in Y} p(y|x) f(y) \qquad (x\in X, f\in \ell^\infty(Y)). \]
Let us say (in a perhaps non-standard way) that the \emph{relation} given by the channel, or by $\theta$, is $R = \{ (x,y) : p(y|x) > 0 \}$ that is, the pairs $(x,y)$ where, with non-zero probability, the symbol $y$ can be obtained by transmitting $x$.  The confusability graph of the channel is the relation on $X$ where $x_1 \sim x_2$ when there is some $y$ with both $(x_1,y)\in R$ and $(x_2,y)\in R$.
\end{example}

We now seek to do something similar for general von Neumann algebras.  Verdon looked at an abstract version of this idea in \cite{Verdon_CovQuantumCombs}, see Section~\ref{sec:CP_to_QR_again} below.  A concrete construction, for algebras of the form $L^\infty(\mc X)$, is given in \cite[Section~6]{kornell2026quantumgraphshomomorphisms}; we make some further comments below.

Let $M\subseteq\mc B(H)$ and $N\subseteq\mc B(K)$ be von Neumann algebras, and let $\theta \colon N \to M$ be a normal completely positive (CP) map.  Then $\theta$ admits a Stinespring dilation: there is a normal unital $*$-homomorphism $\pi_0\colon N \to \mc B(H_0)$, an operator $v_0\colon H \to H_0$, and a normal unital $*$-homomorphism $\rho\colon M' \to \pi_0(N)'$, such that
\[ \theta(y) = v_0^*\pi_0(y)v_0 \quad (y\in N), \qquad v_0x' = \rho(x')v_0 \quad (x'\in M'). \]
Notice that any such ``dilation'' $(\pi_0,v_0,H_0)$ gives a normal CP map $N\to\mc B(H)$, and the existence of $\rho$ ensures this maps into $M$.  We have that $\theta$ is unital (UCP) if and only if $v_0$ is an isometry.  We say that the dilation is \emph{minimal} if $\lin\{ \pi_0(y)v_0\xi : \xi\in H, y\in N\}$ is dense in $H_0$.  A minimal dilation always exists with a suitable $\rho$, \cite[Theorem~IV.5.5]{TakesakiI} for example, and is unique up to unitary equivalence.

Continuing, we can then dilate $\pi_0$ to obtain an isometry $u\colon H_0 \to K\otimes L$ with $u\pi_0(y) = (y\otimes 1)u$ for each $y\in N$, a technique used before.  Then $\theta(y) = v_0^* \pi_0(y) v_0 = v_0^* u^*(y\otimes 1) u v_0$, and so setting $v = uv_0$ we find that $\theta(y) = v^*(y\otimes 1)v$.  Now suppose we have some operator $v\colon H_0 \to K \otimes L$, for some $L$, with $\theta(y) = v^*(y\otimes 1)v$ for $y\in N$.  The map $\iota \colon \pi_0(y) v_0 \xi \mapsto (y\otimes 1)v\xi$ extends by linearity and continuity to an isometry $H_0 \to K\otimes L$, because we have two dilations of $\theta$, that is, because
\begin{align*}
( (y_1\otimes 1)v\xi_1 | (y_2\otimes 1)v\xi_2 )
&= ( \xi_1 | v^*(y_1^*y_2\otimes 1)v \xi_2 )
= ( \xi_1 | \theta(y_1^*y_2) \xi_2 )   \\
&= ( \pi_0(y_1)v_0\xi_1 | \pi_0(y_2)v_0\xi_2 )
\qquad\qquad (y_1,y_2\in N, \xi_1,\xi_2\in H).
\end{align*}
Then $\iota v_0 = v$ and $\iota\pi_0(y) = (y\otimes 1)\iota$ for each $y\in N$.  We can then define $\rho' \colon M' \to \mc B(K\otimes L)$ by $\rho'(x') = \iota \rho(x') \iota^*$.  As $\iota$ is an isometry, $\rho'$ is a $*$-homomorphism.  For $x'\in M', y\in N$ we see that $\rho'(x')(y\otimes 1) = \iota \rho(x') \pi_0(y) \iota^* = \iota \pi_0(y) \rho(x') \iota^* = (y\otimes 1)\rho'(x')$, and so $\rho'(x') \in (N\otimes 1)' = N'\vnten\mc B(L)$.  Furthermore, $\rho'(x') v = \iota \rho(x') \iota^*\iota v_0 = \iota v_0 x' = v x'$, and so $\rho'$ satisfies analogous properties to $\rho$.

\begin{theorem}\label{thm:UCP_to_QR}
Let $\theta \colon N \to M$ be a normal CP map with dilation given by an operator $v\colon H \to K\otimes L$, so that $\theta(y) = v^*(y\otimes 1)v$ for $y\in N$.  Then
\[ V = \lin\{ y' (1\otimes\langle\xi|) v : \xi\in L, y'\in N' \}\clos^{w^*} \subseteq\mc B(H,K) \]
is a quantum relation from $M$ to $N$, whose definition depends only on $\theta$ and not upon the dilation chosen.  Indeed, let $(\pi_0, H_0, v_0)$ be a minimal dilation of $\theta$, and consider the quantum function $V^{\pi_0} \subseteq \mc B(H_0, K)$.  Then $V$ is the weak$^*$-closure of $\{ x v_0 : x\in V^{\pi_0}\}$.
\end{theorem}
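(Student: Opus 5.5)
The plan is to prove the bimodule property directly, and then obtain independence of the dilation by identifying $V$ with the closure of $V^{\pi_0}v_0$, which involves only the minimal dilation.

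First, $V$ is weak$^*$-closed by construction and is plainly a left $N'$-module. For the right $M'$-module property, I will use the discussion before the theorem. Given the dilation $v$, the comparison with the minimal dilation gives an isometry $\iota\colon H_0\to K\otimes L$ with $\iota v_0 = v$ and $\iota\pi_0(y)=(y\otimes 1)\iota$. It also gives $\rho'\colon M'\to N'\vnten\mc B(L)$ with $vx' = \rho'(x')v$. Then for $x'\in M'$,
\[ y'(1\otimes\langle\xi|)vx' = y'(1\otimes\langle\xi|)\rho'(x')v . \]
Since $\rho'(x')\in N'\vnten\mc B(L)$, I approximate it weak$^*$ by finite sums $\sum_i a_i\otimes t_i$ with $a_i\in N'$. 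Each term then gives
\[ (1\otimes\langle\xi|)(a_i\otimes t_i) = a_i(1\otimes\langle t_i^*\xi|). \]
So these approximants lie in $V$, and by separate weak$^*$-continuity of multiplication and weak$^*$-closedness of $V$ the product lies in $V$. A density argument then finishes the module property.

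Second, I identify $V$ with $W := \{xv_0 : x\in V^{\pi_0}\}\clos^{w^*}$, where $V^{\pi_0}=\{x\in\mc B(H_0,K): yx = x\pi_0(y)\ (y\in N)\}$. For one inclusion, note that $(1\otimes\langle\xi|)\iota\in V^{\pi_0}$, by the intertwining property of $\iota$. Since $V^{\pi_0}$ is a left $N'$-module, we get $y'(1\otimes\langle\xi|)v = \big(y'(1\otimes\langle\xi|)\iota\big)v_0\in W$, hence $V\subseteq W$.

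For the converse I need every $x\in V^{\pi_0}$ to lie in the weak$^*$-closed span of the operators $y'(1\otimes\langle\xi|)\iota$. Here $\iota^*\colon K\otimes L\to H_0$ satisfies $\pi_0(y)\iota^* = \iota^*(y\otimes1)$, so $x\iota^*$ commutes with $N\otimes 1$, giving $x\iota^*\in N'\vnten\mc B(L)$. Then $x = x\iota^*\iota$. Approximating $x\iota^*$ weak$^*$ by $\sum a_i\otimes\rankone{\alpha_i}{\beta_i}$ with $a_i\in N'$ (finite ranks on the $L$ leg, which are weak$^*$-dense), I get
\[ (a\otimes\rankone{\alpha}{\beta})\iota = (1\otimes|\alpha\rangle)\,a(1\otimes\langle\beta|)\iota . \]
Since $x$ maps into $K$ and not $K\otimes L$, I must be careful with the legs. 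I will instead use $\iota^*$ with the correct domain, viewing $x\iota^*\colon K\otimes L\to K$, which is an element of $N'\vnten\mc B(L,\mathbb C)$ as an $N'$-$(N'\vnten 1)$ intertwiner. It is therefore approximated by sums $\sum a_i(1\otimes\langle\beta_i|)$. Composing with $\iota$ gives $x\in\overline{\lin}\{a(1\otimes\langle\beta|)\iota\}$, and multiplying by $v_0$ on the right gives $W\subseteq V$.

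With this identification, $V$ is defined purely in terms of $(\pi_0,H_0,v_0)$. A unitarily equivalent minimal dilation $w\colon H_0\to H_0'$ transports $V^{\pi_0}$ to $V^{\pi_0'}w^{*}$ and $v_0$ to $wv_0$, so the product $V^{\pi_0}v_0$ is unchanged. Independence of the dilation follows.

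The main obstacle is the slice-map step identifying $\{T\in\mc B(K\otimes L,K): T(y\otimes 1)=yT\}$ with the weak$^*$-closed span of $N'(1\otimes\langle\xi|)$. I would first try the $2\times2$ matrix trick: embed into $\mc B(K\otimes(L\oplus\mathbb C))$, where the relevant commutant is $N'\vnten\mc B(L\oplus\mathbb C)$, and cut down by the corner projection. Alternatively I would use the Kraus slice-map property $S_\sigma$, as in Proposition~\ref{prop:changing_base_space}.
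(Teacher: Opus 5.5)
Your proposal is correct. For the right $M'$-module property it follows the paper: use $\rho'$ and weak$^*$-approximate $\rho'(x')$ by elementary tensors.

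The genuine difference is in identifying the weak$^*$-closed span $W$ of $N'(1\otimes\langle\xi|)\iota$ with $V^{\pi_0}$. The paper never proves the two inclusions directly. It observes that $W_0W_0^*\subseteq N'$, because $\iota\iota^*\in N'\vnten\mc B(L)$, so $W$ is a coinjective relation from $\mc B(H_0)$ to $N$. Theorem~\ref{thm:funcs_to_HMs} then gives $W=V^\pi$ for some normal $*$-homomorphism $\pi$. Testing $x\pi(y)=yx$ on elements of $W_0$, and using that $\iota$ is an isometry, forces $\pi=\pi_0$.

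You instead argue by hand:
\begin{itemize}
\item $W\subseteq V^{\pi_0}$ follows from the intertwining $\iota\pi_0(y)=(y\otimes1)\iota$.
\item For $V^{\pi_0}\subseteq W$, you write $x=(x\iota^*)\iota$, where $T=x\iota^*$ satisfies $T(y\otimes 1)=yT$. The $2\times2$ trick then places $T$ in the weak$^*$-closed span of $N'(1\otimes\langle\beta|)$: the corner copy of $T$ in $\mc B(K\otimes(L\oplus\mathbb C))$ commutes with $N\otimes1$, so it lies in $N'\vnten\mc B(L\oplus\mathbb C)$. Compressing elementary tensors $a\otimes s$ yields $a(1\otimes\langle\beta|)$.
\end{itemize}
This is valid and more elementary. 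It needs only the commutant of an amplification, not the Hilbert $C^*$-module representation theorem (Rieffel / Skeide) behind Theorem~\ref{thm:funcs_to_HMs}. The paper's route is shorter given that machinery, and it makes visible that coinjectivity is the structural reason $W$ is the relation of a homomorphism.

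Some slips to tidy:
\begin{itemize}
\item Your first attempt to place $x\iota^*$ in $N'\vnten\mc B(L)$ is a type error, as you noticed.
\item The correct condition is $T(y\otimes1)=yT$ for $y\in N$, not an ``$N'$--$(N'\vnten1)$'' intertwining.
\item In the unitary-equivalence remark, the correct relation is $V^{\pi_0'}=V^{\pi_0}w^*$. In any case you do not need uniqueness of minimal dilations: comparing every dilation $v$ with one fixed minimal dilation already shows $V$ is independent of $v$.
\end{itemize}
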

\begin{proof}
As above, there is $\rho' \colon M' \to N'\vnten\mc B(L)$ which is intertwining for $v$.  For $\xi\in H_1$ we have $(1\otimes\langle\xi|)v x' = (1\otimes\langle\xi|)\rho'(x') v \in (1\otimes\langle\xi|) (N'\vnten\mc B(L)) v \in V$ by weak$^*$-continuity.  So $V$ is a right $M'$-module, and left $N'$-module by definition, so a quantum relation from $M$ to $N$.

Let $(\pi_0, H_0, v_0)$ be a minimal dilation of $\theta$, and as above, consider the isometry $\iota \colon H_0 \to K\otimes L$, which satisfies that $\iota v_0 = v$.  Setting
\[ W_0 = \lin\{ y' (1\otimes\langle\xi|) \iota : \xi\in L, y'\in N' \} \subseteq \mc B(H_0, K), \]
and $W$ to be the weak$^*$-closure of $W_0$, we have that $V$ is the weak$^*$-closure of $\{ x v_0 : x\in W_0 \}$.  As the operation of pre-composing with $v_0$ is weak$^*$-continuous, also $V$ is the weak$^*$-closure of $\{ x v_0 : x\in W \}$.  We shall show that $W = V^{\pi_0}$.

Clearly $W$ is a $N'$-$\mc B(H_0)'$-bimodule, and so a quantum relation from $\mc B(H_0)$ to $N$.  
A typical element of $W_0\circ W_0^*$ is $y_1'(1\otimes\langle\xi_1|)\iota \iota^* (1\otimes |\xi_2\rangle) y_2' = y_1' (\id\otimes\omega_{\xi_1,\xi_2})(\iota\iota^*) y_2' \in N'$ because $\iota\iota^* \in (N\otimes 1)' = N'\vnten\mc B(L)$ by the commutation relation $(y\otimes 1)\iota = \iota\pi_0(y)$ for $y \in N$.  Taking weak$^*$-closures shows that also $W\circ W^* \subseteq N'$, that is, $W$ is coinjective.  By Theorem~\ref{thm:funcs_to_HMs}, there is a normal $*$-homomorphism $\pi \colon N \to \mc B(H_0)$ with $W = V^\pi$.  In particular, $x \pi(y) = y x$ for $y\in N, x\in W$.  However, for $x =  y' (1\otimes\langle\xi|) \iota \in W_0$ we find that
\[  y' (1\otimes\langle\xi|) \iota \pi(y) = y y' (1\otimes\langle\xi|) \iota  =  y' (1\otimes\langle\xi|) (y\otimes 1)\iota =  y' (1\otimes\langle\xi|) \iota \pi_0(y)
\qquad (y\in N). \]
As this holds for all $y',\xi$ we obtain $\iota\pi(y)=\iota\pi_0(y)$, and as $\iota$ is an isometry, we see that $\pi=\pi_0$.  Hence, as claimed,
\[ W = V^{\pi_0} = \{ x\in\mc B(H_0, K) : yx = x\pi_0(y) \ (y\in N) \}. \]

We have hence shown the second claim of the theorem.  But then we have expressed $V$ just using data from the minimal dilation of $\theta$, and hence it follows that the initial definition of $V$ does not depend upon the choice of $v$.
\end{proof}

\begin{definition}
Write $V^\theta$ for the quantum relation defined by a CP map $\theta$.
\end{definition}

\begin{example}\label{eg:classical_channel_relation}
Let's return to Example~\ref{eg:classical_channel}, that of a (classical) channel from $X$ to $Y$.  A dilation of the UCP map $\theta \colon \ell^\infty(Y) \to \ell^\infty(X)$ is given by $v\colon \ell^2(X) \to \ell^2(Y) \otimes \ell^2(X); \delta_x \mapsto \sum_y p(y|x)^{1/2} \delta_y \otimes \delta_x$ as then
\[ v^*(f\otimes 1)v\delta_x = v^* \sum_y p(y|x)^{1/2} f(y) \delta_y \otimes \delta_x
= \sum_y p(y|x_ f(y) \delta_x = \theta(f) \delta_x. \]
Let $e_y\in \ell^\infty(Y)$ be the minimal projection onto the single point $y$, and let $e_{y,x}$ be the rank-one map $\ell^2(X) \to \ell^2(Y); \delta_x\mapsto\delta_y$ and $\delta_z\mapsto 0$ for $z\not=x$.  We see that $(1\otimes\langle\delta_x|)v \delta_z = \delta_{x,z} \sum_y p(y|x)^{1/2} \delta_y = \sum_y p(y|x)^{1/2} e_{y,x}\delta_z$.  Hence the quantum relation is
\begin{align*}
V &= \lin\{ e_y (1\otimes\langle\delta_x|)v : x\in X, y\in Y\}\clos^{w^*}
= \lin\{ p(y|x)^{1/2} e_{y,x} : x\in X, y\in Y\}\clos^{w^*} \\
&= \lin\{ e_{y,x} : p(y|x)\not=0 \}\clos^{w^*}.
\end{align*}
That is, the quantum relation given by the relation $R$ associated to the channel, as we might hope; compare Example~\ref{eg:adj_in_comm_case} below.
\end{example}

\begin{remark}\label{rem:graph_from_qr}
Consider $V^{\theta *}\circ V^\theta$, a quantum relation on $M$.  This equals
\begin{align*}
& \lin\{ v^* (1\otimes|\xi_1\rangle)y'(1\otimes\langle\xi_2|)v : y'\in N', \xi_1,\xi_2\in H_1 \}\clos^{w^*} \\
&= \lin\{ v^* (y' \otimes T)) v : y'\in N', T\in\mc B(H_1) \}\clos^{w^*} \\
&= \lin\{ v^* S v : S \in N' \vnten \mc B(H_1) \}\clos^{w^*}
\end{align*}
Comparing with \cite[Remark~7.6]{daws_quantum_graphs} we see that $V^{\theta *}\circ V^\theta$ is the pullback of the trivial quantum graph $N'$, equivalently, is the quantum confusability graph of $\theta$.  Hence $V^\theta$ can indeed be regarded as the ``relation'' associated to $\theta$.
\end{remark}

\begin{remark}\label{rem:UCP_to_cosurjective}
In particular, when $\theta$ is UCP, we see that $1\in V^{\theta *}\circ V^\theta$ so $M' \subseteq V^{\theta *}\circ V^\theta$ by the bimodule property, and so $V^\theta$ is cosurjective.
\end{remark}

We now show that our construction here generalises the construction in the previous section, for Quantum Functions and $*$-homomorphisms, and so $V^\theta$ is not confusing notation.

\begin{proposition}\label{prop:rel_from_HM_is_from_CP}
Let $\theta \colon N \to M$ be a normal $*$-homomorphism.  Then the quantum relation $V$ given by Theorem~\ref{thm:UCP_to_QR} is $V^\theta$, the quantum function associated to $\theta$.
\end{proposition}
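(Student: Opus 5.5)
Since Theorem~\ref{thm:UCP_to_QR} lets us compute $V$ from any minimal Stinespring dilation of $\theta$, the plan is to write down an explicit minimal dilation of the $*$-homomorphism $\theta$ and apply the second half of that theorem. The natural choice is $H_0 = \theta(1)(H)$, $\pi_0(y) = \theta(y)$ restricted to $H_0$ (a normal unital $*$-homomorphism $N\to\mc B(H_0)$), and $v_0 = \theta(1)\colon H\to H_0$, the corestriction of the projection $e=\theta(1)$. Then $v_0^*\pi_0(y)v_0 = e\theta(y)e = \theta(y)$. Minimality holds because $\pi_0(1)v_0 H = e(H) = H_0$. The intertwiner $\rho$ is $\rho(x') = ex'e|_{H_0}$, since $e\in M$ commutes with $M'$. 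Theorem~\ref{thm:UCP_to_QR} then gives that $V$ is the weak$^*$-closure of $\{xv_0 : x\in V^{\pi_0}\}$, where
\[ V^{\pi_0} = \{ x\in\mc B(H_0,K) : yx = x\pi_0(y) \ (y\in N)\}. \]

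It remains to identify this set with $V^\theta = \{ w\in\mc B(H,K) : yw = w\theta(y)\ (y\in N)\}$. First, let $x\in V^{\pi_0}$ and set $w = xv_0$, regarded as $xe\colon H\to K$. Then $w\theta(y) = x e\theta(y) = x\pi_0(y)e = yxe = yw$, so $w\in V^\theta$. Conversely, let $w\in V^\theta$. Then $w = w\theta(1) = we$, so $w = (w|_{H_0})v_0$. For $\xi\in H_0$ we have $w|_{H_0}\pi_0(y)\xi = w\theta(y)\xi = yw\xi$, so $w|_{H_0}\in V^{\pi_0}$. Thus $\{xv_0 : x\in V^{\pi_0}\} = V^\theta$ exactly.

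Since $V^\theta$ is already weak$^*$-closed, no closure is needed, and $V = V^\theta$. The only point needing care is the bookkeeping between operators on $H_0$ and on $H$ when $\theta$ is non-unital; the computation itself is routine. A direct approach via the formula defining $V$ with $v = uv_0$ would also work, but it is longer, so the minimal-dilation route is preferable.
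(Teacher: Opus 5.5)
Your proposal is correct and matches the paper's proof essentially step for step. Both arguments take the minimal dilation $H_0=\theta(1)(H)$, $\pi_0=\theta|_{H_0}$, $v_0=e$, apply the second half of Theorem~\ref{thm:UCP_to_QR}, and identify $V^{\pi_0}e$ with $V^\theta$ via $x\mapsto xe$ and $w\mapsto w|_{H_0}$. Your explicit intertwiner $\rho(x')=ex'e|_{H_0}$ is a small extra check that the paper leaves implicit.
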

\begin{proof}
Considering $\theta$ as a CP map, we find a dilation $(\pi_0, v_0, H_0)$ as follows.  Set $e=\theta(1)$ a projection in $M$ with $e \theta(y) = \theta(y) e = \theta(y)$ for each $y\in N$.  Let $H_0 = e(H)$ and $\pi_0$ be $\theta$ considered with codomain $eMe \subseteq \mc B(H_0)$.  Let $v_0 \colon H \to H_0$ be the projection $e$.  Then
\[ v_0^* \pi_0(y) v_0 = e \theta(y) e = \theta(y) \qquad (y\in N), \]
and so we do have a dilation.  Also $\lin\{ \pi_0(y) v_0 \xi : \xi\in H, y\in N\} = \lin\{ \theta(y)e\xi : \xi\in H, y\in N \} = e(H) = H_0$ so the dilation is minimal.  
By Theorem~\ref{thm:UCP_to_QR}, it follows that $V = V^{\pi_0}e$, as this is weak$^*$-closed because $e$ is idempotent.

Let $x\in V^{\pi_0}$, so $x\colon H_0 \to K$ satisfies $yx = x\pi_0(y) = x\theta(y)|_{H_0}$ for $y\in N$.  Thus $yxe = x\theta(y)e = x\theta(y)$ and so $xe\in V^\theta$.  Conversely, let $x \in V^\theta$ and set $x_0 = x|_{H_0} \colon H_0 \to K$.  Then $yx_0 = yx|_{H_0} = x\theta(y)|_{H_0} = x|_{H_0}\pi_0(y) = x_0\pi_0(y)$ for $y\in N$, and so $x_0 \in V^{\pi_0}$, and then $x_0e = xe = x\theta(1)=1x=x$.  It follows that $V = V^{\pi_0}e = V^\theta$ as claimed.
\end{proof}

So far, the definition of $V^\theta$, while being independent of the dilation of $\theta$ chosen, does depend upon the representations $M\subseteq \mc B(H)$ and $N\subseteq\mc B(K)$.  We now show that if we vary $H,K$ then $V^\theta$ varies in exactly the way we expect from Corollary~\ref{corr:indep_rep}.

\begin{proposition}
Given a UCP map $\theta \colon N \to M$, the definition of $V^\theta$ does not depend upon the choice of Hilbert spaces which $M,N$ act on.
\end{proposition}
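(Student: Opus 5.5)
We must show: if $M\subseteq\mc B(H_1)$ (via $\theta_M\colon M\to\mc B(H_1)$) and $N\subseteq\mc B(K_1)$ (via $\theta_N$), then the relation $V^\theta_1\subseteq\mc B(H_1,K_1)$ built from $\theta$ in the new representation equals $(V^\theta)_{\theta_M\oplus\theta_N} = u_N^*(V^\theta\vnten\mc B(L_1,L_2))u_M$. Here $u_M\colon H_1\to H\otimes L_1$ and $u_N\colon K_1\to K\otimes L_2$ are the dilating isometries of Remark~\ref{rem:changing_base_spaces}.

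The plan is to build an explicit dilation of $\theta$ in the new representation from one in the old, and compare the spanning sets directly. Start with a dilation $v\colon H\to K\otimes L$, so $\theta(y)=v^*(y\otimes 1)v$, together with the intertwiner $\rho'\colon M'\to N'\vnten\mc B(L)$ from before Theorem~\ref{thm:UCP_to_QR}. In the new representation we need $w\colon H_1\to K_1\otimes L'$ with $\theta_M(\theta(y)) = w^*(\theta_N(y)\otimes 1)w$. A natural candidate is
\[ w = (u_N^*\otimes 1)\,\sigma\,(v\otimes 1_{L_1})\,u_M \colon H_1 \to K_1\otimes L_2\otimes L\otimes L_1, \]
where $\sigma$ is the flip placing $L_2$ next to $K$. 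For $w$ to be a dilation we need $(u_N u_N^*\otimes 1)$ to act trivially, which fails in general. So instead I would use a non-minimal dilation of the form $\theta_M\circ\theta$ without $u_N^*$. Note that $\theta_N(y)\otimes 1$ pulled back through $u_N$ is $y\otimes 1$ compressed by $u_Nu_N^*\in N'\vnten\mc B(L_2)$. Because Theorem~\ref{thm:UCP_to_QR} says $V^\theta$ depends only on $\theta$ and allows arbitrary dilations (not necessarily minimal), it suffices to find \emph{some} dilation in $K_1\otimes L'$.

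The cleaner route is to use the minimal-dilation description. Take a minimal dilation $(\pi_0,H_0,v_0)$ of $\theta$ relative to $H$, so that $V^\theta=\overline{V^{\pi_0}v_0}^{w^*}$. In the new representation, a minimal dilation of $\theta_M\circ\theta$ (identified with $\theta$) is $(\pi_0, H_0\otimes L_1, (v_0\otimes 1)u_M)$, possibly cut down to its minimal part. This works because
\[ u_M^*(v_0^*\pi_0(y)v_0\otimes 1)u_M=\theta_M(\theta(y)). \]
Then the new relation is the closure of $V^{\pi_0\otimes 1}_1\,(v_0\otimes 1)u_M$, where $V^{\pi_0\otimes1}_1\subseteq\mc B(H_0\otimes L_1,K_1)$ is the quantum function of the $*$-homomorphism $\pi_0(\cdot)\otimes 1$ in the representation $N\subseteq\mc B(K_1)$. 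Proposition~\ref{prop:changing_base_space} (with $H_0\otimes L_1$ on the domain side and $\theta_N$ on the codomain side) identifies this with $u_N^*(V^{\pi_0}\vnten\mc B(L_1,L_2))$, suitably arranged. Composing with $(v_0\otimes 1)u_M$ and taking weak$^*$-closures then gives $u_N^*(V^\theta\vnten\mc B(L_1,L_2))u_M$, using separate weak$^*$-continuity of multiplication as in Proposition~\ref{prop:base_change_to_comp}.

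The main obstacle is the minimality and domain bookkeeping. Theorem~\ref{thm:UCP_to_QR} is phrased with a minimal dilation, and $(\pi_0\otimes1, H_0\otimes L_1,(v_0\otimes1)u_M)$ need not be minimal. I expect to avoid this by working instead with the first, spanning-set description, which allows arbitrary dilations $v$. Concretely, set $w=(u_N^*\otimes 1_L\otimes 1_{L_1})\,\Sigma\,(v\otimes 1)u_M$, with $v$ chosen via the construction before Theorem~\ref{thm:UCP_to_QR} so that it factors through $u_N$ (first dilate $\theta_N\circ\pi_0$). Its slices $(1\otimes\langle\xi|)w$ are then exactly $u_N^*(\text{slice of }v\otimes t)u_M$, and equality of the two closed spans reduces to the routine rank-one density argument of Remark~\ref{rem:changing_base_spaces}.
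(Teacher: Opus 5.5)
Your final plan, the compatible-dilation argument in your last paragraph, is correct and takes a different route from the paper. Two points need to be made precise.

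\textbf{The auxiliary dilation.} Write $\theta_1$ for $\theta$ viewed as a map $\theta_N(N)\to\theta_M(M)$. You should dilate the normal unital $*$-homomorphism $\pi_0\circ\theta_N^{-1}$ of $\theta_N(N)\subseteq\mc B(K_1)$, not ``$\theta_N\circ\pi_0$''. Take an isometry $r\colon H_0\to K_1\otimes L'$ intertwining it with $a\mapsto a\otimes 1$, and put $\tilde v=rv_0$. Then $v=(u_N\otimes 1)\tilde v$ dilates $\theta$ for $N\subseteq\mc B(K)$, with $L=L_2\otimes L'$. Also $w=(\tilde v\otimes 1_{L_1})u_M$ dilates $\theta_1$. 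So no flip is needed and the $u_Nu_N^*$ problem disappears.

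\textbf{The comparison of spans.} This is not just rank-one density. A slice $(1\otimes\langle\xi\otimes\eta|)w=u_N^*\big((1_{K\otimes L_2}\otimes\langle\xi|)v\,(1_H\otimes\langle\eta|)\big)u_M$ is a weak$^*$-convergent sum of terms $u_N^*(s\otimes t)u_M$, not a single one. More importantly, you must match the left multipliers. For one inclusion, Lemma~\ref{lem:com_under_dilation} ($\theta_N(N)'=u_N^*(N'\vnten\mc B(L_2))u_N$) together with $u_Nu_N^*\in N'\vnten\mc B(L_2)$ absorbs $\theta_N(N)'$ into $N'\vnten\mc B(L_2)$. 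For the other inclusion you need the identity $u_N^*\big(y'(1\otimes\langle\beta'\otimes\xi|)v\otimes\rankone{\beta}{\eta}\big)u_M=b'(1\otimes\langle\xi\otimes\eta|)w$, where $b'=u_N^*(y'\otimes\rankone{\beta}{\beta'})u_N\in\theta_N(N)'$.

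Your intermediate ``cleaner route'' also works, and the minimality worry there is not a real obstacle. The identification $W=V^{\pi_0}$ in the proof of Theorem~\ref{thm:UCP_to_QR} uses only that $\iota$ is an intertwining isometry. Composing any dilation $(\pi,\tilde H,\tilde v)$ with an isometry intertwining $\pi$ and $y\mapsto y\otimes1$ then shows $V^\theta$ is the weak$^*$-closure of $V^\pi\tilde v$. Now $(\pi_0(\cdot)\otimes 1,H_0\otimes L_1,(v_0\otimes 1)u_M)$ is a dilation, though not in general a minimal one as you first wrote. It is literally a base change of $\pi_0$, so Proposition~\ref{prop:changing_base_space} applies verbatim and finishes the proof in a few lines.

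\textbf{The paper's route.} The paper fixes minimal dilations $(\pi_0,H_0,v_0)$ of $\theta$ and $(\pi_0',H_0',v_0')$ of $\theta_1$. It builds a comparison isometry $\alpha\colon H_0'\to H_0\otimes L_1$ with $\alpha v_0'=(v_0\otimes1)u_M$, intertwining $\pi_0'\circ\theta_N$ and $\pi_0\otimes 1$. It then reruns the slice-map argument of Proposition~\ref{prop:changing_base_space} (Kraus's property $S_\sigma$) with $\alpha^*$ in place of $u_M$, obtaining $V^{\pi_0'}=\{x: u_Nx\alpha^*\in V^{\pi_0}\vnten\mc B(L_1,L_2)\}$, and finally checks both inclusions.

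\textbf{Comparison.} Your final approach avoids minimal dilations and $\alpha$ by choosing the old and new dilations compatibly from the start. It needs only the dilation-independent spanning-set description of Theorem~\ref{thm:UCP_to_QR} and Lemma~\ref{lem:com_under_dilation}, so it is more elementary and never invokes property $S_\sigma$. The paper's version works with the canonical minimal data in which Theorem~\ref{thm:UCP_to_QR} is phrased, at the cost of constructing $\alpha$ and repeating the slice-map argument.
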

\begin{proof}
Let $\theta_M \colon M \to \mc B(H_1)$ be a unital injective normal $*$-homomorphism dilated by $u_M \colon H_1 \to H\otimes L_1$, and similarly for $N$.  Let $\theta_1 \colon N \to M$ be the CP map $\theta$, considered now as a map from $\theta_N(N)\subseteq\mc B(K_1)$ to $\theta_M(M)\subseteq\mc B(H_1)$, that is, $\theta_M\circ\theta = \theta_1\circ\theta_N$.  Let $(\pi_0', H_0', v_0')$ be a minimal dilation of $\theta_1$, and again let $(\pi_0, H_0, v_0)$ be a minimal dilation of $\theta$.  For $y_1,y_2\in N, \xi_1,\xi_2\in H_1$, we have
\begin{align*}
( \pi_0'\theta_N(y_1) v_0' \xi_1 | \pi_0'\theta_N(y_2) v_0' \xi_2 )
&= ( \xi_1 | \theta_M(\theta(y_1^*y_2)) \xi_2 )
= ( u_M\xi_1 | (\theta(y_1^* y_2)\otimes 1) u_M\xi_2 )   \\
&= \big(  (\pi_0(y_1)v_0 \otimes 1) u_M\xi_1 \big| (\pi_0(y_2)v_0 \otimes 1) u_M\xi_2 \big).
\end{align*}
So the map $\alpha \colon \pi_0'\theta_N(y)v_0'\xi \mapsto (\pi_0(y)v_0 \otimes 1)u_M\xi$ extends by linearity and continuity to an isometry $H_0' \to H_0\otimes L_1$, which satisfies $\alpha \pi_0'\theta_N(y) = (\pi_0(y)\otimes 1)\alpha$ and $\alpha v_0' = (v_0\otimes 1)u_M$.  Consequently, $\alpha\alpha^* \in (\pi_0(N)\otimes 1)'$, and $\pi_0'\theta_N(y) = \alpha^*(\pi_0(y)\otimes 1)\alpha$.

By definition, $x\colon H_0' \to K_1$ is in $V^{\pi_0'}$ if and only if $\theta_N(y) x = x \pi_0'(y)$ for each $y\in N$, and then arguing as in the proof of Proposition~\ref{prop:changing_base_space}, this is equivalent to $u_N x \alpha^* \in V^{\pi_0} \vnten \mc B(L_1,L_2)$.  So $V^{\theta_1}$ is the weak$^*$-closure of maps of the form $x v_0'$ for such $x$.  As $v_0' = \alpha^*(v_0\otimes 1) u_M$, we see that $x v_0' =  x \alpha^*(v_0\otimes 1) u_m = u_N^* x_0 (v_0\otimes 1) u_M$ for some $x_0 \in V^{\pi_0} \vnten \mc B(L_1,L_2)$.  Then $x_0 (v_0\otimes 1) \in V^{\theta} \vnten \mc B(L_1,L_2)$, and so $xv_0' \in u_N^*(V^{\theta} \vnten \mc B(L_1,L_2))u_M$ which by Remark~\ref{rem:changing_base_spaces} is exactly $\tilde V^\theta$, the quantum relation from $\theta_M(M)$ to $\theta_N(N)$ induced by $V^\theta$.

Conversely, a weak$^*$-dense subspace of $\tilde V^\theta$ is $\{ u_N^*(xv_0 \otimes t) u_M : x\in V^{\pi_0}, t\in\mc B(L_1, L_2) \}$.  Given such an $a = u_N^*(xv_0 \otimes t) u_M \in \tilde V^\theta$, set $x_1 = u_N^*(x\otimes t) \alpha$, so $u_N x_1 \alpha^* = u_N u_N^* (x\otimes t) \alpha\alpha^* \in V^{\pi_0} \vnten \mc B(L_1, L_2)$ by the bimodule property.  Hence $x_1 \in V^{\pi_0'}$, and so $x_1 v_0' \in V^{\theta_1}$.  Then $x_1 v_0' = u_N^* (x\otimes t) \alpha v_0' = u_N^* (xv_0\otimes t) u_M = a$.  As $V^{\theta_1}$ is weak$^*$-closed, this shows that $\tilde V^\theta \subseteq V^{\theta_1}$, and the proof is complete.
\end{proof}

We also have an exact analogue of Proposition~\ref{prop:hm_to_qr_composition}.

\begin{proposition}\label{prop:cp_to_qr_composition}
Let $\theta_1 \colon M_1 \to M_2$ and $\theta_2 \colon M_2 \to M_3$ be CP maps, where $M_i \subseteq\mc B(H_i)$ are von Neumann algebras for $i=1,2,3$.  Then $V^{\theta_2 \circ \theta_1} = V^{\theta_1} \circ V^{\theta_2}$.
\end{proposition}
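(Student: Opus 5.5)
We work with dilations throughout. Write $\theta_1(x) = v_1^*(x\otimes 1)v_1$ with $v_1 \colon H_2 \to H_1\otimes L_1$, and $\theta_2(y) = v_2^*(y\otimes 1)v_2$ with $v_2\colon H_3\to H_2\otimes L_2$, for $x\in M_1$, $y\in M_2$. Then
\[ \theta_2\circ\theta_1(x) = v_2^*(v_1^*(x\otimes 1)v_1\otimes 1)v_2 = w^*(x\otimes 1\otimes 1)w, \]
where $w = (v_1\otimes 1_{L_2})v_2 \colon H_3 \to H_1\otimes L_1\otimes L_2$. This is a dilation of $\theta_2\circ\theta_1$ with ancilla $L_1\otimes L_2$. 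By Theorem~\ref{thm:UCP_to_QR}, $V^{\theta_2\circ\theta_1}$ does not depend on the dilation chosen, so
\[ V^{\theta_2\circ\theta_1} = \lin\{ y'(1\otimes\langle\xi\otimes\eta|)w : y'\in M_1', \xi\in L_1,\eta\in L_2 \}\clos^{w^*}. \]
The key algebraic identity is
\[ (1\otimes\langle\xi\otimes\eta|)(v_1\otimes 1)v_2 = (1\otimes\langle\xi|)v_1\,(1\otimes\langle\eta|)v_2 . \]
This exhibits each spanning element as $a\,b$, with $a = y'(1\otimes\langle\xi|)v_1 \in V^{\theta_1}$ and $b = (1\otimes\langle\eta|)v_2\in V^{\theta_2}$. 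Since $\langle\zeta|$ for $\zeta\in L_1\otimes L_2$ is a norm limit of finite sums of elementary $\langle\xi\otimes\eta|$, this gives $V^{\theta_2\circ\theta_1}\subseteq V^{\theta_1}\circ V^{\theta_2}$.

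For the reverse inclusion, we argue as in the associativity discussion at the end of Section~\ref{sec:qrs}. It suffices to show $ab\in V^{\theta_2\circ\theta_1}$ for $a$ and $b$ ranging over weak$^*$-dense subsets of $V^{\theta_1}$ and $V^{\theta_2}$, using separate weak$^*$-continuity of multiplication and weak$^*$-closedness of the target. Take $a = y_1'(1\otimes\langle\xi|)v_1$ and $b = y_2'(1\otimes\langle\eta|)v_2$ with $y_1'\in M_1'$ and $y_2'\in M_2'$. The obstacle is the middle factor $y_2'$: we must move it past $(1\otimes\langle\xi|)v_1$. For this we use the intertwining homomorphism $\rho_1' \colon M_2' \to M_1'\vnten\mc B(L_1)$ built before Theorem~\ref{thm:UCP_to_QR}, which satisfies $v_1 y_2' = \rho_1'(y_2')v_1$. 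Then
\[ (1\otimes\langle\xi|)v_1 y_2' = (1\otimes\langle\xi|)\rho_1'(y_2')v_1 . \]
By weak$^*$-density it suffices to treat the case where $\rho_1'(y_2')$ is replaced by an elementary tensor $z'\otimes t$ with $z'\in M_1'$ and $t\in\mc B(L_1)$. Then $(1\otimes\langle\xi|)(z'\otimes t) = z'(1\otimes\langle t^*\xi|)$, and hence
\[ ab \approx y_1' z' (1\otimes\langle t^*\xi|)v_1(1\otimes\langle\eta|)v_2 = y_1'z'(1\otimes\langle t^*\xi\otimes\eta|)w , \]
which lies in $V^{\theta_2\circ\theta_1}$. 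Limits of these are taken in the weak$^*$ topology, where pre- and post-composition with fixed operators are continuous, so the inclusion $V^{\theta_1}\circ V^{\theta_2}\subseteq V^{\theta_2\circ\theta_1}$ follows.

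The main care point is this approximation step, where $\rho_1'(y_2')$ is approximated by finite sums of elementary tensors in $M_1'\vnten\mc B(L_1)$ and we must check that the map $T\mapsto (1\otimes\langle\xi|)T v_1 (1\otimes\langle\eta|)v_2$ is weak$^*$-continuous. It is, being a composition with fixed bounded operators, so the argument closes. No unitality is needed anywhere, consistent with the statement being for general CP maps. Proposition~\ref{prop:hm_to_qr_composition} then follows via Proposition~\ref{prop:rel_from_HM_is_from_CP}.
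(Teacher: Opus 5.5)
Your proof is correct and follows the paper's argument: the same composite dilation $(v_1\otimes 1)v_2$, the same factorisation $(1\otimes\langle\xi\otimes\eta|)(v_1\otimes 1)v_2 = (1\otimes\langle\xi|)v_1(1\otimes\langle\eta|)v_2$, and the same density and continuity closure. The one difference is cosmetic: your $\rho_1'$ step re-derives inline the right $M_2'$-module property of $V^{\theta_1}$, which the paper simply cites, since it was already shown in the proof of Theorem~\ref{thm:UCP_to_QR}.
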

\begin{proof}
Let $u_1 \colon H_2 \to H_1 \otimes L_1$ dilate $\theta_1$, and $u_2 \colon H_3 \to H_2 \otimes L_2$ dilate $\theta_2$.  For $x\in M_1$ we have
\[ \theta_2(\theta_1(x)) = u_2^*(\theta_1(x)\otimes 1)u_2 = u_2^*(u_1\otimes 1)^*(x\otimes 1\otimes 1)(u_1\otimes 1) u_2, \]
and so $u = (u_1\otimes 1) u_2$ dilates $\theta_2 \circ \theta_1$.  Hence, as we take linear spans and closures,
\begin{align*}
V^{\theta_2 \circ \theta_1}
&= \lin \{ y' (1\otimes \langle \xi\otimes\eta|)u : \xi\in L_1, \eta\in L_2, y'\in M_1' \}\clos^{w^*}   \\
&= \lin \{ y' (1\otimes \langle \xi|)u_1 (1\otimes \langle\eta|)u_2 : \xi\in L_1, \eta\in L_2, y'\in M_1' \}\clos^{w^*}  \\
&= \lin\{ x (1\otimes \langle\eta|)u_2 : x\in V^{\theta_1}, \eta\in L_2 \}\clos^{w^*}  \\
&= \lin\{ x y' (1\otimes \langle\eta|)u_2 : x\in V^{\theta_1}, \eta\in L_2, y'\in M_2' \}\clos^{w^*}  \\
&= V^{\theta_1} \circ V^{\theta_2},
\end{align*}
in the penultimate step using that $V^{\theta_1}$ is a right $M_2'$-module.
\end{proof}

This proposition shows that $\theta \mapsto V^\theta$ is a functor between the relevant categories.  See also our discussion of \cite{Verdon_CovQuantumCombs} below, in particular Propositions~\ref{prop:functor_A_to_V} and~\ref{prop:A_vs_theta}.

Given a CP map $\theta \colon N \to M$ with dilation $\theta(y) = v^*(y\otimes 1)v$ for some operator $v\colon H \to K\otimes L$, take an orthonormal basis $(e_i)_{i\in I}$ for $L$, so there are operators $b_i \colon H \to K$ with $v(\xi) = \sum_{i\in I} b_i(\xi)\otimes e_i$ for each $\xi\in H$, and hence $\sum_i b_i^* b_i = v^*v < \infty$.  Then $\theta(y) = \sum_{i\in I} b_i^* y b_i$ is a \emph{Kraus representation} of $\theta$.  The following should be compared to \cite[Proposition~6.1(3)]{kornell2026quantumgraphshomomorphisms}.

\begin{proposition}\label{prop:kraus_to_V}
Let $\theta \colon N \to M$ have Kraus representation $\theta(y) = \sum_{i\in I} b_i^* y b_i$ for $y\in N$.  The associated quantum relation is
\[ V^\theta = \lin\{ y' b_i : y'\in N', i\in I \}\clos^{w^*}. \]
\end{proposition}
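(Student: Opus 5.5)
Given the Kraus representation $\theta(y)=\sum_{i\in I} b_i^* y b_i$, I would reverse the construction just described and build a dilation. Take $L=\ell^2(I)$ with standard orthonormal basis $(e_i)_{i\in I}$, and define $v\colon H\to K\otimes L$ by $v(\xi)=\sum_i b_i(\xi)\otimes e_i$.

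\emph{Step 1: $v$ is a bounded dilation of $\theta$.} We have $\|v\xi\|^2=\sum_i\|b_i\xi\|^2=(\xi|\theta(1)\xi)$. Here the sum $\sum_i b_i^*b_i=\theta(1)$ converges strongly, which is implicit in the statement that $\theta$ has this Kraus representation. So $v$ is well-defined and bounded. The same computation, with $y\otimes 1$ inserted, shows $v^*(y\otimes 1)v=\sum_i b_i^*yb_i=\theta(y)$ for $y\in N$.

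\emph{Step 2: apply Theorem~\ref{thm:UCP_to_QR}.} The theorem says $V^\theta$ is independent of the chosen dilation, so we may use this $v$. Then
\[ V^\theta=\lin\{ y'(1\otimes\langle\xi|)v : \xi\in L,\ y'\in N'\}\clos^{w^*}. \]
Directly, $(1\otimes\langle e_i|)v=b_i$. This gives the inclusion $\lin\{y'b_i\}\subseteq V^\theta$.

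\emph{Step 3: the reverse inclusion.} For general $\xi=\sum_i \xi_i e_i\in L$ (with $\overline{\xi_i}$ the appropriate coefficients), we get $(1\otimes\langle\xi|)v=\sum_i\overline{\xi_i}\,b_i$. This series converges in norm, since by Cauchy--Schwarz
\[ \Big\|\sum_{i\in F}\overline{\xi_i}b_i\eta\Big\|\le\Big(\sum_{i\in F}|\xi_i|^2\Big)^{1/2}\|v\|\,\|\eta\| \]
for any finite $F\subseteq I$. Hence $(1\otimes\langle\xi|)v$ lies in the norm closure of $\lin\{b_i\}$. Left multiplication by $y'\in N'$ is norm-continuous, so $y'(1\otimes\langle\xi|)v$ lies in the norm, hence weak$^*$, closure of $\lin\{y'b_i\}$. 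Taking weak$^*$-closed spans gives equality.

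The only mildly delicate point is Step 3's convergence estimate, i.e.\ controlling the tail $\sum_{i\notin F}\overline{\xi_i}b_i$ uniformly using $\|v\|$. It is handled by the Cauchy--Schwarz bound above: the tail's norm is at most $\big(\sum_{i\notin F}|\xi_i|^2\big)^{1/2}\|v\|$, which tends to $0$. Everything else is a direct application of Theorem~\ref{thm:UCP_to_QR}.
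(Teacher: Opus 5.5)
Your proof is correct and follows the same route as the paper: identify the Kraus family with a dilation $v$, note that $(1\otimes\langle\xi|)v=\sum_i\overline{\xi_i}\,b_i$, and conclude using the dilation-independence in Theorem~\ref{thm:UCP_to_QR}. Your Step~1 spells out the construction of $v$ from the $b_i$, which the paper takes for granted because it introduces the Kraus representation from a dilation, and your norm estimate $\|(1\otimes\langle\xi-\xi_F|)v\|\le\|\xi-\xi_F\|\,\|v\|$ is exactly the ``standard approximation argument'' that the paper only mentions.
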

\begin{proof}
As any $\xi\in L$ has the form $\xi = \sum_i \xi_i e_i$ for some square-summable $(\xi_i)$, we see that $(1\otimes \langle\xi|)v = \sum_i \overline{\xi_i} b_i$, and so the claim follows from the definition of $V^\theta$ from Theorem~\ref{thm:UCP_to_QR} and a standard approximation argument.
\end{proof}

\begin{remark}\label{rem:not_all_cosurj}
As in Remark~\ref{rem:UCP_to_cosurjective}, when $\theta$ is UCP, $V^\theta$ is cosurjective.  We might wonder if every cosurjective quantum relation $V$ from $M$ to $N$ arises from a UCP map $N\to M$?  Here is an example to show that this is not the case.

Let $M=N=\mathbb M_2$ so a quantum relation from $M$ to $N$ is simply a subspace $V\subseteq \mc B(\mathbb C^2, \mathbb C^2) = \mathbb M_2$.  Set
\[ \hat u_1 = \begin{pmatrix} 3 & 0 \\ 0 & 2 \end{pmatrix}, \quad
u_1 = \frac{1}{\sqrt 2}\begin{pmatrix} 1 & 1 \\ -1 & 1 \end{pmatrix} \hat u_1, \quad
u_2 = \begin{pmatrix} \sqrt 8 & 0 \\ 0 & \sqrt 3 \end{pmatrix}. \]
Then $u_1^*u_1 = \hat u_1^* \hat u_1$ and $u_1^*u_1 - u_2^*u_2 = 1$.  Set $V = \lin \{ u_1, u_2 \}$ so $1 \in V^*\circ V$ and so $V$ is cosurjective.

Suppose there is a  UCP $\theta \colon N \to M$ giving $V$.  Let $(\pi,u,H)$ be a minimal Stinespring representation of $\theta$, so $\pi \colon \mathbb M_2 \to \mc B(H)$ and $u\colon \mathbb C^2\to H$ is an isometry with $u^*\pi(y)u = \theta(y)$ for each $y\in\mathbb M_2$.  Then $H = \mathbb C^2 \otimes H_1$ for some $H_1$ and $\pi(y) = y\otimes 1$.  Pick an orthonormal basis $(\delta_i)$ for $H_1$ and let $b_i\in\mathbb M_2$ with $u(\xi) = \sum_i b_i(\xi)\otimes\delta_i$ for each $\xi\in\mathbb C^2$.  Then $\theta(y) = \sum_i b_i^* y b_i$ is a Kraus representation.  Minimality means that $\lin\{\pi(y) u\xi : y\in\mathbb M_2, \xi\in\mathbb C^2\}$ is dense in $H$, that is,
\[ \mathbb C^2 \otimes H_1
= \overline\lin\big\{ \sum_i yb_i\xi \otimes \delta_i : y\in\mathbb M_2, \xi\in\mathbb C^2 \big\}
= \lin\big\{ \sum_i e_{st} b_i\delta_r \otimes \delta_i : 1 \leq s,t,r\leq 2 \big\}, \]
where we remove the closure as the 2nd expression is clearly finite-dimensional.  Hence $H_1$ is finite-dimensional (in fact, of dimension at most 4).  Indeed, as $e_{st} b_i\delta_r = (\delta_t|b_i\delta_r) \delta_s$ this is equivalent to
\[ H_1 = \lin \{ \xi_{t,r} : 1\leq t,r\leq 2 \}
\quad\text{ where }\quad
\xi_{t,r} = \sum_i (\delta_t|b_i\delta_r)\delta_i. \]
Suppose we have scalars $s_i$ with $\sum_i s_i b_i = 0$.  Then for each $t,r$ we have
\[ 0 = \sum_i (\delta_t|s_ib_i\delta_r)
= (\overline s | \xi_{t,r}), \]
where $\overline s = \sum_i \overline{s_i} \delta_i$.  As $\{\xi_{t,r}\}$ spans $H_1$ this is equivalent to $\overline{s}=0$, that is, $s_i=0$ for each $i$.  So the $b_i$ are linearly independent.  By Proposition~\ref{prop:kraus_to_V} we have that $V=\lin\{b_i\}$ and so the $b_i$ form a basis for $V$.

So $\{ u_1, u_2\}$ and $\{b_1,b_2\}$ are bases for $V$.  There is hence a matrix $B\in\mathbb M_2$ with
\begin{align*}
B \begin{pmatrix} u_1 \\ u_2 \end{pmatrix} 
= \begin{pmatrix} b_1 \\ b_2 \end{pmatrix}
\implies
1 &= b_1^*b_1 + b_2^*b_2 = \begin{pmatrix} u_1^* &  u_2^* \end{pmatrix} B^*B\begin{pmatrix} u_1 \\ u_2 \end{pmatrix} 
= \begin{pmatrix} u_1^* &  u_2^* \end{pmatrix} \begin{pmatrix} s & \mu \\ \overline\mu & t \end{pmatrix}
\begin{pmatrix} u_1 \\ u_2 \end{pmatrix}  \\
&= s u_1^*u_1 + \mu u_1^*u_2 + \overline\mu u_2^*u_1 + t u_2^*u_2,
\end{align*}
say, where as $B^*B$ is positive definite, $s,t\geq 0$ and $|\mu^2|\leq st$.  Looking at the top-right entry of $\mathbb M_2$, we have
\[ u_1 = \frac{1}{\sqrt 2} \begin{pmatrix} 3 & 2 \\ -3 & 2 \end{pmatrix}
\implies u_2^*u_1 = \frac{1}{\sqrt 2} \begin{pmatrix} 3\sqrt 8 & 2\sqrt 8 \\ -3\sqrt 3 & 2\sqrt 3 \end{pmatrix}
\implies
0 = - \mu 3\sqrt 3 + \overline\mu 2\sqrt 8, \]
but then $|\mu|3\sqrt 3 = |\mu| 2\sqrt 8$ contradiction.

This contradiction shows that $V$ cannot arise from a UCP map $\theta$.
\end{remark}

\begin{remark}\label{rem:all_qr_from_CP}
When $M = \mathbb M_m, N=\mathbb M_n$, as the commutants are trivial, by taking a basis $(b_i)$ of any $V\subseteq\mc B(\mathbb C^m, \mathbb C^n)$ and then using the $(b_i)$ as Kraus operators, we easily find a CP $\theta \colon N\to M$ with $V^\theta=V$, by Proposition~\ref{prop:kraus_to_V}.
We generalise this all finite-dimensional von Neumann algebras in Corollary~\ref{corr:all_fd_qrs_from_CP} below, using different techniques.
\end{remark}

It would be interesting to describe the quantum relations which can arise as $V^\theta$ with $\theta$ UCP.  What seems to provide the counter-example given in Remark~\ref{rem:not_all_cosurj} is that we construct $V$ with $V^*\circ V$ unital, but such that there is no spanning set $(b_i)$ of $V$ with $\sum_i b_i^*b_i = 1$.  That the same phenomena does not occur for quantum graphs, \cite[Lemma~2]{duan2009superactivationzeroerrorcapacitynoisy}, seems to be a slightly subtle interplay between using that a quantum graph is both self-adjoint, and unital, and so has a good notion of positivity.

\subsection{Pullbacks}\label{sec:pullbacks}

In \cite{Weaver_QuantumGraphs} Weaver defined the notion of a \emph{pullback}, a way to transform a quantum relation (over a single algebra) using a (U)CP map.  We now generalise this to quantum relations over two algebras, and show links with the previous section.  We first recall our treatment of pullbacks, see \cite[Theorem~7.5]{daws_quantum_graphs}.  Let $V\subseteq\mc B(H_1)$ be a quantum relation over a von Neumann algebra $M_1\subseteq\mc B(H_1)$, and let $\theta\colon M_1\to M_2$ be a normal CP map.  With $(\pi,L,v)$ a dilation of $\theta$, the pullback of $V$ is $\overleftarrow{V}^\theta$ the weak$^*$-closure of $v^*V_\pi v$, where $V_\pi$ is as in Lemma~\ref{lem:HM_mapping_preserves_qr}.  This definition is independent of the dilation chosen, and $\overleftarrow{V}^\theta \subseteq \mc B(H_2)$ is a quantum relation over $M_2$.  We write $\overleftarrow{V}$ when the CP map $\theta$ used is clear from context.

This definition readily generalises to quantum relations as studied in this paper.

\begin{proposition}\label{prop:pullback_defn}
For $i=1,2$, let $M_i\subseteq\mc B(H_i), N_i\subseteq\mc B(K_i)$ be von Neumann algebras, let $V \subseteq \mc B(H_1,K_1)$ be a quantum relation from $M_1$ to $N_1$, and let $\theta_M\colon M_1\to M_2$ and $\theta_N\colon N_1\to N_2$ be normal CP maps.  Treating $V \subseteq \mc B(H_1\oplus K_1)$ as a quantum relation over $M_1\oplus N_1$, let $\theta = \theta_N\oplus\theta_M$, and form the pullback $U = \overleftarrow{V}^\theta$.  Then:
\begin{enumerate}
    \item $1_{N_2} U 1_{M_2} = U$, so $U$ defines a quantum relation from $M_2$ to $N_2$;
    \item with $\theta_M(x) = u_M^*(x\otimes 1)u_M, \theta_N(y) = u_N^*(y\otimes 1)u_N$ dilations with, say, $u_M\colon H_2 \to H_1\otimes L_M, u_N\colon K_2\to K_1\otimes L_N$, as a subspace of $\mc B(H_2, K_2)$ we have that $U$ is the weak$^*$-closure of $u_N^* (V \vnten \mc B(L_M, L_N)) u_M$.
\end{enumerate}
\end{proposition}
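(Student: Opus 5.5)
We will compute the pullback with a concrete dilation of $\theta$ built from the given dilations $u_M,u_N$, and then read off both claims from that formula. Since the pullback $\overleftarrow{V}^\theta$ does not depend on the dilation chosen (\cite[Theorem~7.5]{daws_quantum_graphs}), we may use any convenient one. With the caveat that $\theta=\theta_N\oplus\theta_M$ acts on $M_1\oplus N_1$, we interpret it as $\theta_M$ on the $M_1$ summand and $\theta_N$ on the $N_1$ summand. Mirroring Remark~\ref{rem:changing_base_spaces}, set $L_0=L_M\oplus L_N$ and define
\[ u\colon H_2\oplus K_2 \to (H_1\oplus K_1)\otimes L_0, \qquad (\xi,\eta)\mapsto (u_M\xi,0,0,u_N\eta), \]
using the decomposition $(H_1\otimes L_M)\oplus(H_1\otimes L_N)\oplus(K_1\otimes L_M)\oplus(K_1\otimes L_N)$. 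Then $u^*(z\otimes 1)u=\theta(z)$ for $z\in M_1\oplus N_1$, so $(\pi,(H_1\oplus K_1)\otimes L_0,u)$ with $\pi(z)=z\otimes 1$ is a dilation of $\theta$. Hence $U$ is the weak$^*$-closure of $u^*V_\pi u=u^*(V\vnten\mc B(L_0))u$. Note that $u$ need not be an isometry, as the maps are only CP, but nothing below uses this.

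For claim (1), we first invoke the lemma before Lemma~\ref{lem:HM_mapping_preserves_qr}, which gives $V_\pi = 1_{N_1}V_\pi 1_{M_1}$. By construction, $u\,1_{M_2} = 1_{M_1}u$ and $u\,1_{N_2}=1_{N_1}u$, where $1_{M_1}$ means $1_{M_1}\otimes 1$. Indeed $u$ maps $H_2$ into $H_1\otimes L_0$ and $K_2$ into $K_1\otimes L_0$. This is the analogue of the identity $u1_M=1_Mu$ in the proof of Lemma~\ref{lem:HM_mapping_preserves_qr}, but here it is verified directly rather than via $\theta(1_M)$, since $\theta$ need not be unital. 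Therefore
\[ u^*V_\pi u = u^*1_{N_1}V_\pi 1_{M_1}u = 1_{N_2}(u^*V_\pi u)1_{M_2}. \]
Since compression by projections is weak$^*$-continuous, the same holds after taking closures, giving $U\subseteq 1_{N_2}U1_{M_2}$. The reverse inclusion follows from the bimodule property of $U$ over $M_2'\oplus N_2'$.

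For claim (2), we take a rank-one $t=\rankone{\gamma_1}{\gamma_2}\in\mc B(L_0)$ with $\gamma_i=(\alpha_i,\beta_i)$ and $v\in V\subseteq\mc B(H_1,K_1)$. The same matrix-coefficient computation as in Remark~\ref{rem:changing_base_spaces} gives
\[ 1_{K_2}u^*(v\otimes t)u1_{H_2} = u_N^*(v\otimes\rankone{\beta_1}{\alpha_2})u_M. \]
As $v\otimes t$ ranges over a weak$^*$-total subset of $V\vnten\mc B(L_0)$, the rank-one operators $\rankone{\beta_1}{\alpha_2}$ range over a total set in $\mc B(L_M,L_N)$. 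Compression by $u$ is weak$^*$-continuous, so after taking closures we obtain equality of $U$ with the weak$^*$-closure of $u_N^*(V\vnten\mc B(L_M,L_N))u_M$.

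The only delicate point is the weak$^*$ bookkeeping. Conjugation by $u$ and cutting down to corners are weak$^*$-continuous, so approximating elements of $V\vnten\mc B(L_0)$ by finite sums of $v\otimes t$ with $t$ of finite rank is legitimate. We keep the explicit closure in the statement because $u_N^*(\cdot)u_M$ need not have weak$^*$-closed range.
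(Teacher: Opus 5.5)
Your proof is correct and takes the same route as the paper. You build the same block dilation $u(\xi,\eta)=(u_M\xi,0,0,u_N\eta)$ of $\theta$. Claim (1) then follows from the intertwinings $u1_{M_2}=(1_{M_1}\otimes 1)u$ and $u1_{N_2}=(1_{N_1}\otimes 1)u$ together with $V=1_{N_1}V1_{M_1}$. Claim (2) follows from the matrix-coefficient computation of Remark~\ref{rem:changing_base_spaces}, which never uses that $u_M,u_N$ are isometries. Your handling of the weak$^*$-closures is more careful than the paper's, which writes $U=u^*(V\vnten\mc B(L_M\oplus L_N))u$ without taking the closure.
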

\begin{proof}
We use a similar argument to Remark~\ref{rem:changing_base_spaces}.  Given the dilations of $\theta_M$ and $\theta_N$, we see that $\theta$ has a dilation given by
\[ u \colon H_2\oplus K_2 \to (H_1\oplus K_1) \otimes (L_M\oplus L_N) \cong (H_1 \otimes L_M) \oplus (H_1 \otimes L_N) \oplus (K_1 \otimes L_M) \oplus (K_1 \otimes L_N),  \]
namely $u = u_N \oplus 0\oplus 0\oplus u_M$.  Then by definition $U = u^*(V \vnten \mc B(L_M\oplus L_N)) u$.  Notice that $u 1_{M_2} = (1_{M_1}\otimes 1)u$ and similarly $u 1_{N_2} = (1_{N_1}\otimes 1)u$, and so $U 1_{M_2} = u^*(V\vnten\mc B(L_M\oplus L_N))(1_{M_1}\otimes 1)u = U$ because $V 1_{M_1} = V$, and similarly $1_{N_2} U = U$.
The final claim follows from the calculation in Remark~\ref{rem:changing_base_spaces}, where now as $u_M, u_N$ are general operators (and don't satisfy, for example, that $u_Mu_M^* \in (M\otimes 1)'$) we need to take the weak$^*$-closure.
\end{proof}

As for quantum confusability graphs, \cite[Remark~7.6]{daws_quantum_graphs}, we find that $V^\theta$, for $\theta$ a CP map, can be expressed as a pullback.

\begin{proposition}\label{prop:Vtheta_from_pullback}
Given a UCP map $\theta \colon N \to M$, the quantum relation $V^\theta$ is the pullback of the trivial relation $N'$ along the CP map $\theta \oplus 1 \colon N\oplus N \to M\oplus N$.
\end{proposition}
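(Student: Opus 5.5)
We apply Proposition~\ref{prop:pullback_defn} with $M_1 = N_1 = N$, $M_2 = M$, $N_2 = N$, taking $\theta_M = \theta\colon N\to M$ and $\theta_N = \id\colon N\to N$.
Here the trivial relation $N'\subseteq\mc B(K)$ is regarded as a quantum relation from $N$ to $N$, i.e.\ the identity morphism on $N$. The map $\theta\oplus 1$ in the statement is then exactly the direct sum of these two CP maps, up to the ordering convention $\theta_N\oplus\theta_M$ used in Proposition~\ref{prop:pullback_defn}.

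For the dilations we take $u_M = v\colon H\to K\otimes L$ with $\theta(y)=v^*(y\otimes 1)v$, as in Theorem~\ref{thm:UCP_to_QR}. For $\theta_N=\id$ we take the trivial dilation $u_N\colon K\to K\otimes\mathbb C$, $\eta\mapsto\eta\otimes 1$, so $L_N=\mathbb C$. By part (2) of Proposition~\ref{prop:pullback_defn}, the pullback $U$, viewed in $\mc B(H,K)$, is the weak$^*$-closure of $u_N^*(N'\vnten\mc B(L,\mathbb C))v$.

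Now $\mc B(L,\mathbb C)$ consists exactly of the bras $\langle\xi|$ for $\xi\in L$, and $u_N^*$ simply identifies $K\otimes\mathbb C$ with $K$. Hence the algebraic tensor product part $u_N^*(N'\otimes\mc B(L,\mathbb C))v$ is
\[ \lin\{ y'(1\otimes\langle\xi|)v : y'\in N', \xi\in L\}, \]
whose weak$^*$-closure is $V^\theta$ by definition (Theorem~\ref{thm:UCP_to_QR}).

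It remains to check that passing to the weak$^*$-closure $N'\vnten\mc B(L,\mathbb C)$ adds nothing new. The map $z\mapsto u_N^* z v$ is weak$^*$-continuous, so it sends the closure of $N'\otimes\mc B(L,\mathbb C)$ into the closure of its image, which is $V^\theta$. Conversely, the image of the algebraic tensor product already contains the spanning set of $V^\theta$. Taking closures on both sides therefore gives $U=V^\theta$.

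The only point needing care is the bookkeeping in identifying the corners. Specifically, we must check that the $\mc B(H_2,K_2)$-corner of the $2\times 2$ picture matches the placement of $V$ in the bottom-left corner, with the orders of $\oplus$ and of $\theta_N\oplus\theta_M$ consistent with the statement's $\theta\oplus 1$. The pullback is independent of the chosen dilation, as recalled before Proposition~\ref{prop:pullback_defn}, so the choice of trivial dilation for $\id$ is legitimate. Unitality of $\theta$ is not actually needed in this argument.
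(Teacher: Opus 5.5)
Your proposal is correct and is essentially the paper's proof. Like the paper, you dilate $\theta\oplus 1$ by the pair $(v,1)$, apply part (2) of Proposition~\ref{prop:pullback_defn}, and recognise $u_N^*(N'\vnten\mc B(L,\mathbb C))v$ as giving the spanning set of $V^\theta$ from Theorem~\ref{thm:UCP_to_QR}. Your extra check that passing to the weak$^*$-closure adds nothing is sound, as is your observation that unitality of $\theta$ is never used.
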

\begin{proof}
Let $v \colon H\to K\otimes L$ dilate $\theta$, so by definition, Theorem~\ref{thm:UCP_to_QR}, $V^\theta = \lin \{ y'(1\otimes \langle\xi|)v : \xi\in L, y'\in N'\}\clos^{w^*}$.  The CP map $\theta \oplus 1$ is dilated by the pair $(v,1)$, and so
\[ \overleftarrow{N'} = 1 (N'\vnten \mc B(L, \mathbb C)) v \clos^{w^*}
= \lin\{ y' (1\otimes \langle\xi|)v : \xi\in L, y'\in N' \}\clos^{w^*} = V^\theta, \]
as required.
\end{proof}

We next show that the pullback can be expressed using composition of relations, and the relations defined by the CP maps $\theta_M$ and $\theta_N$; for algebras of the form $L^\infty(\mc X)$, see also \cite[Proposition~6.1]{kornell2026quantumgraphshomomorphisms}.

\begin{proposition}\label{prop:pullback_as_qr_composition}
We have that $\overleftarrow{V} = V^{\theta_N *} \circ V \circ V^{\theta_M}$.
\end{proposition}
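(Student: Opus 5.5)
We work with concrete dilations throughout: $\theta_M(x) = u_M^*(x\otimes 1)u_M$ with $u_M\colon H_2\to H_1\otimes L_M$, and $\theta_N(y)=u_N^*(y\otimes 1)u_N$ with $u_N\colon K_2\to K_1\otimes L_N$. By Proposition~\ref{prop:pullback_defn}(2), $\overleftarrow{V}$ is the weak$^*$-closure of $u_N^*(V\vnten\mc B(L_M,L_N))u_M$. By Theorem~\ref{thm:UCP_to_QR},
\[ V^{\theta_M} = \lin\{ x'(1\otimes\langle\xi|)u_M : \xi\in L_M, x'\in M_1'\}\clos^{w^*}\subseteq\mc B(H_2,H_1), \]
and similarly for $V^{\theta_N}$. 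Taking adjoints gives
\[ V^{\theta_N *} = \lin\{ u_N^*(1\otimes|\eta\rangle) y' : \eta\in L_N, y'\in N_1'\}\clos^{w^*}. \]
The plan is to expand the triple composition and compare it with this description of $\overleftarrow V$.

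Using the associativity argument given at the end of Section~\ref{sec:qrs}, $V^{\theta_N *}\circ V\circ V^{\theta_M}$ is the weak$^*$-closed span of the triple products
\[ u_N^*(1\otimes|\eta\rangle)\, y' v x'\,(1\otimes\langle\xi|)u_M , \]
and by separate weak$^*$-continuity of multiplication it suffices to let the three factors range over the spanning sets above. Since $V$ is an $N_1'$-$M_1'$-bimodule, $y'vx'$ ranges over exactly $V$, as $1$ lies in both commutants. Next we use the identity
\[ (1\otimes|\eta\rangle)\,v\,(1\otimes\langle\xi|) = v\otimes\rankone{\eta}{\xi}. \]
Hence the span of the triple products is $u_N^*(V\odot F)u_M$, where $F$ denotes the finite-rank operators in $\mc B(L_M,L_N)$ and $\odot$ the algebraic tensor product.

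It then remains to check that the weak$^*$-closure of $u_N^*(V\odot F)u_M$ equals the weak$^*$-closure of $u_N^*(V\vnten\mc B(L_M,L_N))u_M$. One inclusion is trivial. For the other, $V\odot F$ is weak$^*$-dense in $V\vnten\mc B(L_M,L_N)$, since finite-rank operators are weak$^*$-dense in $\mc B(L_M,L_N)$ and $V\vnten\mc B(L_M,L_N)$ is by definition the closure of $V\otimes\mc B(L_M,L_N)$. The map $z\mapsto u_N^*zu_M$ is weak$^*$-continuous, so the image of a dense set is dense in the image, and we obtain equality after closures.

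The main point needing care, which is more bookkeeping than a real obstacle, is that the dilations chosen for $\theta_M,\theta_N$ in the pullback can be the same ones used to compute $V^{\theta_M},V^{\theta_N}$. This holds because both constructions are independent of the dilation, by \cite[Theorem~7.5]{daws_quantum_graphs} together with Proposition~\ref{prop:pullback_defn} for the pullback, and Theorem~\ref{thm:UCP_to_QR} for $V^\theta$. We should also check that the form of $u$ used in Proposition~\ref{prop:pullback_defn} places $V\vnten\mc B(L_M,L_N)$ in the correct corner, so that the $\mc B(H_2,K_2)$ component is indeed $u_N^*(\cdot)u_M$ and not the transposed corner.
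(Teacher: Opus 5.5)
Your proposal is correct and follows essentially the same route as the paper. The paper runs the same chain of equalities in the opposite direction: it starts from the description of $\overleftarrow{V}$ in Proposition~\ref{prop:pullback_defn}, reduces to rank-one $t=\rankone{\eta}{\xi}$, factors $v\otimes\rankone{\eta}{\xi}=(1\otimes|\eta\rangle)v(1\otimes\langle\xi|)$, inserts $y'\in N_1'$ and $x'\in M_1'$ via the bimodule property, and recognises $V^{\theta_N *}$ and $V^{\theta_M}$ from Theorem~\ref{thm:UCP_to_QR}; your version additionally spells out the density and triple-composition steps that the paper leaves implicit.
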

\begin{proof}
We continue as with the notation in Proposition~\ref{prop:pullback_defn}.  Then, as we take the weak$^*$-closure, it suffices to consider rank-one operators in $\mc B(L_M, L_N)$ and so
\begin{align*}
\overleftarrow{V} &= \lin \{ u_N^* (v \otimes t) u_M : v\in V, t\in\mc B(L_M, L_N) \}\clos^{w^*}  \\
&= \lin \{ u_N^* (v \otimes \rankone{\eta}{\xi}) u_M : v\in V, \xi\in L_M, \eta\in L_N, \}\clos^{w^*}  \\
&= \lin \{ u_N^*(1 \otimes |\eta\rangle) v (1\otimes \langle\xi|) u_M  : v\in V, \xi\in L_M, \eta\in L_N, \}\clos^{w^*} \\
&= \lin \{ u_N^*(1 \otimes |\eta\rangle) y' v x' (1\otimes \langle\xi|) u_M  : v\in V, \xi\in L_M, \eta\in L_N, x' \in M_1', y'\in N_1'\}\clos^{w^*},
\end{align*}
in the final step using that $V$ is an $N_1'$-$M_1'$-bimodule.  From Theorem~\ref{thm:UCP_to_QR}, we recognise the relations $V^{\theta_M}$ and $V^{\theta_N *}$, and so see that $\overleftarrow{V} = V^{\theta_N *} \circ V \circ V^{\theta_M}$.
\end{proof}

This gives a simple proof of how composition interacts with the pullback construction.

\begin{corollary}\label{corr:composition_pullbacks}
Given CP maps $\theta_1 \colon M_1\oplus N_1 \to M_2\oplus N_2$ and $\theta_2 \colon M_2\oplus N_2 \to M_3\oplus N_3$ of the form considered here, and a quantum relation $V$ from $M_1$ to $N_1$, we have that $\overleftarrow{V}^{\theta_2\circ\theta_1} = (V^{\leftarrow\theta_1})^{\leftarrow\theta_2}$.
\end{corollary}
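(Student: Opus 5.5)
The plan is to reduce everything to Proposition~\ref{prop:pullback_as_qr_composition} and the functoriality statement Proposition~\ref{prop:cp_to_qr_composition}, together with associativity of composition of quantum relations from the end of Section~\ref{sec:qrs}. Write $\theta_1 = \theta_{N,1}\oplus\theta_{M,1}$ and $\theta_2 = \theta_{N,2}\oplus\theta_{M,2}$, where $\theta_{M,1}\colon M_1\to M_2$, $\theta_{M,2}\colon M_2\to M_3$ and similarly for $N$; this is what ``of the form considered here'' means. Then $\theta_2\circ\theta_1 = (\theta_{N,2}\circ\theta_{N,1})\oplus(\theta_{M,2}\circ\theta_{M,1})$ is again of this form, so the pullback $\overleftarrow{V}^{\theta_2\circ\theta_1}$ makes sense as a relation from $M_3$ to $N_3$ by Proposition~\ref{prop:pullback_defn}.

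First I compute the left-hand side. By Proposition~\ref{prop:pullback_as_qr_composition},
\[ \overleftarrow{V}^{\theta_2\circ\theta_1} = V^{(\theta_{N,2}\circ\theta_{N,1})*}\circ V\circ V^{\theta_{M,2}\circ\theta_{M,1}}. \]
Proposition~\ref{prop:cp_to_qr_composition} gives $V^{\theta_{M,2}\circ\theta_{M,1}} = V^{\theta_{M,1}}\circ V^{\theta_{M,2}}$, and likewise for $N$. Taking adjoints, $(U\circ W)^* = W^*\circ U^*$, which is immediate since the adjoint is weak$^*$-continuous and reverses products. Hence
\[ V^{(\theta_{N,2}\circ\theta_{N,1})*} = V^{\theta_{N,2}*}\circ V^{\theta_{N,1}*}. \]

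Next I compute the right-hand side by applying Proposition~\ref{prop:pullback_as_qr_composition} twice:
\[ (V^{\leftarrow\theta_1})^{\leftarrow\theta_2} = V^{\theta_{N,2}*}\circ\big(V^{\theta_{N,1}*}\circ V\circ V^{\theta_{M,1}}\big)\circ V^{\theta_{M,2}}. \]
Associativity then shows that the two expressions agree.

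The only point needing care is the adjoint identity and the use of associativity for longer products, where the identification with the weak$^*$-closed span of products $uvw\cdots$ generalises directly. I expect nothing harder here. One should also note that Proposition~\ref{prop:pullback_as_qr_composition} was proved for general CP maps via arbitrary dilations, so no unitality is needed.
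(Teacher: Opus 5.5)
Your proposal is correct and is essentially the paper's argument. The paper deduces the corollary immediately from Proposition~\ref{prop:pullback_as_qr_composition} together with the functoriality in Proposition~\ref{prop:cp_to_qr_composition}; you have written out the bookkeeping the paper leaves implicit, namely $(U\circ W)^* = W^*\circ U^*$ and associativity for longer composites.
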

\begin{proof}
This now follows immediately by using Proposition~\ref{prop:cp_to_qr_composition}.
\end{proof}

\begin{remark}
In particular, given a quantum relation $V$ over a single algebra $M_1$, and a single CP map $\theta \colon M_1\to M_2$, we have that $\overleftarrow{V} = V^{\theta *} \circ V \circ V^\theta$.  As in Remark~\ref{rem:graph_from_qr}, the quantum confusability graph of a UCP map $\theta$, being the pullback of the trivial quantum relation, is $V^{\theta *}\circ M_2'\circ V^\theta = V^{\theta *}\circ V^\theta$.
\end{remark}

\begin{remark}
Suppose that $\theta_M, \theta_N$ and actually unital normal $*$-homomorphisms.  Comparing Proposition~\ref{prop:pullback_defn} and Remark~\ref{rem:changing_base_spaces} shows immediately that $V_\theta$, the quantum relation induced by $V$ along the homomorphism $\theta$, is equal to $\overleftarrow{V}$, the pullback.  By Proposition~\ref{prop:pullback_as_qr_composition}, this equals $V^{\theta_N *} \circ V \circ V^{\theta_M}$, where $V^{\theta_M}$ and $V^{\theta_N}$ are here coinjective, as in Section~\ref{sec:quantum_functions}.  In particular, taking $\theta_M, \theta_N$ to be injective, so by Theorem~\ref{thm:funcs_to_HMs}, $V^{\theta_N}, V^{\theta_M}$ are quantum functions, this shows how we can compute the effect on quantum relations of changing the Hilbert space which our algebras act on, just using composition within the category of quantum relations.
\end{remark}

\section{Adjacency operators}\label{sec:adj_ops}

In the general theory of Quantum Graphs, at least when our von Neumann algebra $M$ is finite-dimensional, there is a bijection between quantum relations $V \subseteq \mc B(H)$ (that is, $M'$-bimodules, where $M\subseteq\mc B(H)$) and \emph{Quantum adjacency operators}, namely certain operators $A\in\mc B(L^2(M))$.  We explored this in \cite[Section~5]{daws_quantum_graphs} in a quite general way.  Starting with \cite{CW_RandomQGraphs, matsuda_class_m2} an alternative (or perhaps ``parallel'') approach is given by looking at maps $A \colon M \to M$ which are completely positive: this is further expounded in \cite{Wasilewski_Quantum_Cayley}.  Using a generalisation of the Choi Matrix construction, \cite[Section~5.4]{daws_quantum_graphs} and \cite[Section~3.1]{Wasilewski_Quantum_Cayley}, such $A$ biject with positive elements $e\in M \otimes M^\op$, and further, $A$ is Schur-idempotent if and only if $e$ is idempotent (and hence a projection).  We shall follow the conventions laid out in \cite{Wasilewski_Quantum_Cayley} and \cite{daws2025quantumgraphsinfinitedimensionshilbertschmidts}, which are explained more below.

In this section, we make the appropriate modifications to this theory to deal with general quantum relations between two von Neumann algebras $M$ and $N$, taking the approach suggested by Definition~\ref{defn:qrel_2}.  Choose faithful positive functionals $\varphi_M, \varphi_N$ on $M,N$ respectively, and represent $M$ on the GNS space $L^2(M) = L^2(\varphi_M)$, and $N$ on $L^2(\varphi_N)$.  We write $\Lambda = \Lambda_M\colon M \to L^2(M)$ for the GNS map, which in this case is a linear bijection, as $M$ is finite-dimensional.  We shall tend to suppress subscripts when it is clear to do so.

The multiplication map $M\otimes M \to M$ induces a (bounded) linear map $m=m_M \colon L^2(M) \otimes L^2(M) \to L^2(M)$; similarly for $N$.  In particular, we have the adjoint $m^* \colon L^2(M) \to L^2(M)\otimes L^2(M)$ which again can be regarded as a linear map $M\to M\otimes M$.  The \emph{Schur product} on $\mc B(L^2(M))$ is $A \star B = m(A\otimes B)m^*$ for $A,B\in\mc B(L^2(M))$.  As $M$ and $N$ are finite-dimensional, we shall identify $\mc B(L^2(M))$ with $\mc B(M)$, and so forth, and so also regard the Schur product as on $\mc B(M)$.
Motivated by Definition~\ref{defn:qrel_2}, we have the following.

\begin{definition}
A \emph{Quantum adjacency operator} from $M$ to $N$ is a completely positive (CP) map $A \colon M\oplus N \to M\oplus N$ which is Schur idempotent, and which satisfies $1_N A 1_M = A$.
\end{definition}

Again $1_N$ is the unit of $N \subseteq M\oplus N$, which here we identify also with the projection on $M\oplus N; (x,y) \mapsto (0,y)$, and similarly for $1_M$.
We remark that because (for example) $M\oplus N$ acts ``diagonally'' on $L^2(M) \oplus L^2(N)$, that $A$ is a ``corner'' does not preclude it from being CP (which might seem strange if one thinks about positive operators on the direct sum of two Hilbert spaces).  Indeed, it is easy to see that $1_N A 1_M = A$ exactly when there is a linear map $A' \colon M \to N$ with $A(x\oplus y) = 0 \oplus A'(x)$ for $x\in M, y\in N$, and then $A$ is CP if and only if $A'$ is.  We can define a Schur product on $\mc B(L^2(M), L^2(N))$ by $A'\star B' = m_N(A' \otimes B')m_M^*$, and then $A$ is Schur idempotent if and only if $A'$ is.  We hence have the following equivalent definition.

\begin{definition}
A \emph{Quantum adjacency operator} from $M$ to $N$ is a completely positive map $A \colon M \to N$ which is Schur idempotent.
\end{definition}

We shall work with this second definition, but the first, equivalent, definition allows us to quickly apply the existing theory, from the situation when $M=N$.  Rather than go through the (slightly tedious) derivation of taking direct sums, and then corners, we shall directly state the results in this more general situation, occasionally indicating how they follow from the direct-sum formulation.

At this point, we need to use the modular automorphism group of $\varphi_M$, for which we fix some further notation.  We want to choose a reference trace on $M$, for which the \emph{Markov Trace} $\Tr_M$ is most useful.  This is the unique trace on $M$ such that, if we form the GNS space $L^2(M)$ and GNS representation $\pi_M \colon M \to \mc B(L^2(M))$, we have that $\Tr \circ \pi_M = \Tr_M$, where $\Tr$ is the canonical trace on $\mc B(L^2(M))$.  For more, see for example \cite[Lemma~7.12]{daws_quantum_graphs}, and Remark~\ref{rem:delta_forms} below.  There is a positive invertible operator $Q_M\in M$ with $\varphi_M(x) = \Tr_M(Q_M x)$ for each $x\in M$.  Then the modular automorphism group $(\sigma^M_t) = (\sigma_t)$, modular conjugation $J=J_M$ and modular operator $\nabla = \nabla_M$ can be defined by
\[ \sigma_t(x) = Q_M^{it} x Q_M^{-it}, \quad J\Lambda(x) = \Lambda(\sigma_{-i/2}(x^*)), \quad \nabla\Lambda(x) = \Lambda(Q x Q^{-1}) \qquad (x\in M). \]
Further remarks can be found in \cite[Section~5]{daws_quantum_graphs} for example.

Recall the discussion of the opposite algebra from Section~\ref{sec:notation}.
Following \cite[Section~5.4]{daws_quantum_graphs} we define a linear bijection
\[ \Psi' \colon \mc B(L^2(M), L^2(N)) \to N \otimes M^\op; \quad \rankone{\Lambda(x)}{\Lambda(y)} \mapsto x \otimes \sigma_{i/2}(y)^{*\op}. \]
This map is some sort of generalisation of the ``Choi Matrix'' construction, \cite[Theorem~2]{Choi_CPMapsMatrices}.

\begin{theorem}\label{thm:Psi'_CP_idem_to_proj}
The map $\Psi'$ gives a bijection between maps $A\colon M\to N$ and $e\in N\otimes M^\op$ for which the following are equivalent:
\begin{enumerate}
    \item $A$ is completely positive and Schur-idempotent;
    \item $A$ is \emph{real} (meaning $A(x^*)=A(x)^*$ for $x\in M$) and Schur-idempotent; 
    \item $e$ is a (self-adjoint) projection.
\end{enumerate}
\end{theorem}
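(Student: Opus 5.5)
The plan is to reduce to the single-algebra case using the direct-sum formulation (Definition of a quantum adjacency operator as a CP map on $M\oplus N$ with $1_NA1_M=A$). On $P=M\oplus N$ with faithful functional $\varphi_P=\varphi_M\oplus\varphi_N$, we have $L^2(P)=L^2(M)\oplus L^2(N)$, $P^\op = M^\op\oplus N^\op$, and the modular group of $\varphi_P$ is $\sigma^M_t\oplus\sigma^N_t$. The single-algebra map $\Psi\colon\mc B(L^2(P))\to P\otimes P^\op$, $\rankone{\Lambda(a)}{\Lambda(b)}\mapsto a\otimes\sigma_{i/2}(b)^{*\op}$, then restricts, on operators of the form $1_NA1_M$, exactly to $\Psi'$, with image the corner $(1_N\otimes 1_M^\op)(P\otimes P^\op)(1_N\otimes 1_M^\op)\cong N\otimes M^\op$. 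The first step is to verify this compatibility directly on rank-one operators, which is immediate since $\Lambda_P$ and $\sigma^P$ act componentwise.

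Next, invoke the known single-algebra theorem (\cite[Section~5.4]{daws_quantum_graphs}, \cite[Section~3.1]{Wasilewski_Quantum_Cayley}): $\Psi$ is a linear bijection that intertwines the Schur product with multiplication in $P\otimes P^\op$, sends the ``reality'' involution $A\mapsto A^\dagger$, $A^\dagger(x)=A(x^*)^*$, to the $*$-operation, and sends CP maps to positive elements. I would recheck the first two directly for $\Psi'$ on rank-ones. For the Schur product, compute $m_N(\rankone{\Lambda(x_1)}{\Lambda(y_1)}\otimes\rankone{\Lambda(x_2)}{\Lambda(y_2)})m_M^*$. This requires the formula $m_M^*$ adjoint to multiplication, which yields $\rankone{\Lambda(x_1x_2)}{\Lambda(\cdot)}$ with the second variable governed by $\sigma_{i/2}$ twisting. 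The result should be $\Psi'$-multiplicative, i.e.\ $\Psi'(A\star B)=\Psi'(A)\Psi'(B)$. Since $\Psi'$ is linear and rank-ones span, these identities extend.

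With this in hand the equivalences follow. Schur idempotence of $A$ is equivalent to $e^2=e$. Reality is equivalent to $e=e^*$. So (2)$\iff$(3), as a projection is exactly a self-adjoint idempotent. For (1)$\Rightarrow$(2), CP maps are positive, hence real. For (3)$\Rightarrow$(1), a projection is positive, and positivity of $e$ corresponds to CP of $A$ by the Choi-type correspondence; concretely, writing $e=ff^*$ with $f = e$ and expanding in matrix units, one exhibits $A$ as a sum of maps $x\mapsto b^* \cdots$ in Kraus form.

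The main obstacle is the bookkeeping with the modular twist in the Schur product and reality computations, since $\varphi_M$ need not be tracial. Rather than redo these, I would lean on the direct-sum reduction to transport the published single-algebra statement verbatim. Then the only genuine check is that corners are preserved: $e$ is a projection in $P\otimes P^\op$ supported in the $(1_N\otimes1_M^\op)$ corner iff it is a projection in $N\otimes M^\op$, and $1_NA1_M=A$ is preserved by the relevant operations.
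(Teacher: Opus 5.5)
Your proposal is correct and follows essentially the paper's route. You check on rank-ones that $\Psi'$ is multiplicative for the Schur product, and you transport the CP$\leftrightarrow$positive correspondence from the direct sum $M\oplus N$. One imprecision: the Schur product itself carries no modular twist, since $\rankone{\Lambda(x_1)}{\Lambda(y_1)}\star\rankone{\Lambda(x_2)}{\Lambda(y_2)}=\rankone{\Lambda(x_1x_2)}{\Lambda(y_1y_2)}$; the twist sits entirely in $\Psi'$, through the homomorphism $y\mapsto\sigma_{i/2}(y)^{*\op}$. The only real difference is that you get (2)$\iff$(3) from $\Psi'(A^\dagger)=\Psi'(A)^*$, which the paper verifies in the remark after the theorem, whereas the proof itself cites Matsuda for the equivalence with reality.
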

\begin{proof}
For completeness, we provide some details.  $\Psi'$ is a bijection.  For $a\in M$ let $m_M^*\Lambda(a) = \sum_i \Lambda(a_i) \otimes \Lambda(b_i)$.
Then
\begin{align*}
(\rankone{\Lambda(x_1)}{\Lambda(y_1)})  &   \star (\rankone{\Lambda(x_2)}{\Lambda(y_2)}) \Lambda(a) \\
&= m_N\big( \sum_i \Lambda(x_1) (\Lambda(y_1)|\Lambda(a_i)) \otimes \Lambda(x_2) (\Lambda(y_2)|\Lambda(b_i) \big) \\
&= \Lambda(x_1x_2) (m_N(\Lambda(y_1) \otimes \Lambda(y_2)|\Lambda(a))
= \Lambda(x_1x_2) (\Lambda(y_1y_2)|\Lambda(a)),
\end{align*}
so $(\rankone{\Lambda(x_1)}{\Lambda(y_1)}) \star (\rankone{\Lambda(x_2)}{\Lambda(y_2)}) = \rankone{ \Lambda(x_1x_2) }{ \Lambda(y_1y_2) }$.  As $y \mapsto \sigma_{i/2}(y)^{*\op}$ is a homomorphism, it follows that $\Psi'$ is a homomorphism, for the Schur product.

Our map $\Psi'$ is exactly the restriction of the map $\Psi' \colon \mc B(L^2(M)\oplus L^2(N)) \to (M\oplus N)\otimes (M\oplus N)^\op$ from \cite[Section~5.4]{daws_quantum_graphs}.  This latter map is a bijection between CP maps and positive elements.  Thus, $A$ being CP and Schur-idempotent is equivalent to $e$ being idempotent and positive, that is, a projection.  The equivalence with $A$ being real is \cite[Proposition~2.23]{matsuda_class_m2}, see also \cite[Theorem~5.37]{daws_quantum_graphs}.
\end{proof}

\begin{remark}
Let us say a few more words about the ``real'' condition.  If we consider the larger class of \emph{completely bounded} maps, then there is a natural period 2 anti-linear map $T\mapsto T^\dagger$ defined by $T^\dagger(a) = T(a^*)^*$.  For example, we implicitly see this occurring in the ``Paulsen $2\times 2$ matrix trick'', \cite[Theorem~8.3]{PaulsenBook}.  This map is not an involution for the composition product: indeed, $(T\circ S)^\dagger = T^\dagger \circ S^\dagger$.  However, we see that $A$ is real exactly when $A^\dagger = A$.

We claim that $\dagger$ is an involution for the Schur product, and that $\Psi'$ is a $*$-homomorphism.  It suffices to show that $\Psi'(A^\dagger) = \Psi'(A)^*$ when $A=\rankone{x}{y}$.  We have that
\begin{align*}
A^\dagger(a) = A(a^*)^* = \overline{ (\Lambda(y)|\Lambda(a^*)) } x^* 
= \varphi(ay) x^* = \varphi(\sigma_i(y)a) x^* = \rankone{x^*}{\sigma_i(y)^*} (a),
\end{align*}
where we again identify maps $L^2(M)\to L^2(N)$ with maps $M\to N$.  It follows that $\Psi'(A^\dagger) = x^* \otimes \sigma_{i/2}(\sigma_i(y)^*)^{*\op}
= x^* \otimes \sigma_{i/2}(y)^{\op} = \Psi'(A)^*$ as claimed.

One could also show this directly by first observing that $m^\dagger = m \circ \tau$ where $\tau$ is the tensor swap map, and that $(m^*)^\dagger = \tau m^*$.  It follows that $(A_1\star A_2)^\dagger = m^\dagger(A_1^\dagger \otimes A_2^\dagger)(m^*)^\dagger = m(A_2^\dagger \otimes A_1^\dagger)m^* = A_2^\dagger \star A_1^\dagger$.

We also note that $1^\dagger=1$ and so the ``reflexive'' axiom for a Quantum Graph (see \cite[Defintion~2.4]{daws_quantum_graphs} for example), that $A \star 1 = 1$, is equivalent to $1\star A^\dagger=1$ which is of course $1\star A=1$, if $A$ is real.
\end{remark}

We quickly explore the motivating example when both algebras are commutative.

\begin{example}\label{eg:adj_in_comm_case}
Let $X,Y$ be finite sets, and let $M = \ell^\infty(X), N=\ell^\infty(Y)$, acting on $\ell^2(X), \ell^2(Y)$ respectively.  A quantum relation $V$ from $M$ to $N$ is hence a $\ell^\infty(Y)$-$\ell^\infty(X)$-bimodule $V\subseteq\mc B(\ell^2(X), \ell^2(Y))$.  By considering the action of the minimal idempotents $(e_x)_{x\in X}$ and $(e_y)_{y\in Y}$, we see that $V$ is the linear span of the matrix elements $e_{y,x}$ which it contains.  That is, setting $R = \{ (y,x) : e_{y,x}\in V\} \subseteq Y\times X$, we have that $R$ is a relation, and $V = \lin \{ e_{y,x} : (y,x)\in R \}$.  We saw such ideas in Example~\ref{eg:classical_channel_relation}.

The projection $e\in N\otimes M^\op \cong \ell^\infty(Y\times X)$ is simply the indicator function of $R$.  Our choice of the Markov trace is the natural one on $\ell^\infty(X)$, that induced by the counting measure on $X$.  Then $\Lambda \colon \ell^\infty(X) \to \ell^2(X)$ is the formal identity on functions, and the same for $N$, and so the map $\Psi'$ is
\[ \Psi' \colon \mc B(\ell^2(X), \ell^2(Y)) \to \ell^\infty(Y\times X); \quad e_{y,x} = |\Lambda(e_y)\rangle \langle\Lambda(e_x)| \mapsto e_y \otimes e_x = e_{(y,x)}, \]
where $e_{(y,x)} \in \ell^\infty(Y\times X)$ is the minimal projection onto the singleton $(y,x)$.  So $A$, thought of as a matrix, has a $1$ in the $(y,x)$ position when $(y,x)\in R$, and $0$ otherwise.  Of course, $A$ is then (classically) Schur-idempotent and completely positive.
\end{example}

Given $\varphi_M$, the natural faithful positive funtional on $M^\op$ is given by $\varphi_{M^\op}(x^\op) = \varphi(x)$, for which the GNS space $L^2(\varphi_{M^\op})$ is unitarily equivalent to the conjugate space $\overline{ L^2(\varphi_M) }$ for $\Lambda^\op(x^\op) = \overline{\Lambda(x^*)}$, and where $x^\op$ acts as $x^\top \in \mc B(\overline{ L^2(\varphi_M) })$, compare \cite[Lemma~5.32]{daws_quantum_graphs}.

The algebra $N\otimes M^\op$ acts naturally on $L^2(N) \otimes L^2(M^\op) = L^2(N) \otimes \overline{L^2(M)}$. Then projections $e\in N\otimes M^\op$ biject with subspaces $V_0 \subseteq L^2(N) \otimes \overline{L^2(M)}$ which are $(N\otimes M^\op)' = N' \otimes M'^\op$ invariant.  Indeed, $V_0$ is just the image of $e$.  We regard $L^2(N) \otimes \overline{L^2(M)}$ as $HS(L^2(M), L^2(N))$ the Hilbert--Schmidt operators, which here equals $\mc B(L^2(M), L^2(N))$.  Under this identification, $V_0$ becomes a $N'$-$M'$-bimodule, that is, a quantum relation.

However, as suggested by Wasilewski in \cite{Wasilewski_Quantum_Cayley}, we typically do not associate $e$ directly with $V_0$: there are various motivations for this, \cite{daws2025quantumgraphsinfinitedimensionshilbertschmidts, Wasilewski_Quantum_Cayley} also Remark~\ref{rem:can_we_remove_mod} below.  Instead, associate an adjacency operator $A$ with projection $e$, and quantum relation $V$, as follows.  From \cite[Section~2.4]{daws2025quantumgraphsinfinitedimensionshilbertschmidts} we have that when $A = \sum_j \rankone{\Lambda(x_j)}{\Lambda(y_j)}$ then
\begin{equation}
T\in V \quad\iff\quad 
\sum_j \sigma_{-i/4}(x_j) T \sigma_{-i/4}(y_j^*) = T.
\label{eq:what_in_V_from_A}
\end{equation}
Then, by \cite[Proposition~2.9]{daws2025quantumgraphsinfinitedimensionshilbertschmidts}, see also \cite[Theorem~A]{Wasilewski_Quantum_Cayley}, $V^*$ corresponds to $\tau(e)^\op$, the tensor swap of $e$, and to the KMS adjoint of $A$, which by \cite[Lemma~6.21]{daws2025quantumgraphsinfinitedimensionshilbertschmidts} is $J_M A^* J_N$, where $A^*$ is the Hilbert space adjoint of $A$.  (The tensor swap map is $N\otimes M^\op \to M^\op \otimes N$ but we wish to land in $M\otimes N^\op$, which is why we write $\tau(e)^\op$ here.)
Notice that $V^*$ corresponding to $\tau(e)^\op$ seems very natural, and wouldn't be true (in general) if we used $V_0$ in place of $V$.

We wish to consider operators which are not necessarily idempotent, so we now (in a small way) generalise some results from \cite[Section~2]{daws2025quantumgraphsinfinitedimensionshilbertschmidts}.  Given any $A\in \mc B(L^2(M), L^2(N))$ we can still consider $x = \Psi'(A) \in N \otimes M^\op$ and hence form the subspace $\im(x) \subseteq HS(L^2(M), L^2(N))$.  The following defines a $V$ linked to $A$ and $x$, and checks that when $x$ is a projection, we recover the relationship \eqref{eq:what_in_V_from_A}.

\begin{proposition}\label{prop:image_links}
Let $A\in \mc B(L^2(M), L^2(N))$ be $A = \sum_j \rankone{\Lambda(x_j)}{\Lambda(y_j)}$ and set $x = \Psi'(A)$.  Define
\[ V = \{ \nabla_N^{1/4} T \nabla_M^{1/4} : T \in \im(x) \} \subseteq \mc B(L^2(M), L^2(N)). \]
Then:
\begin{enumerate}[(1)]
    \item\label{prop:image_links:1}
    $\im(x) = \big\{ \sum_j x_j T \sigma_{i/2}(y_j)^* : T \in HS(L^2(M), L^2(N)) \big\}$;
    \item\label{prop:image_links:3}
    $V = N' \nabla^{1/4} A \nabla^{-1/4} M' = \lin\{ a' \nabla^{1/4} A \nabla^{-1/4} b' : a'\in N', b'\in M' \}$ where here we consider $A \in \mc B(L^2(M), L^2(N))$.  In particular, $V$ is a quantum relation from $M$ to $N$.
    \item\label{prop:image_links:4}
    we have that $\im(\tau(x^*)^{\op}) = \{ T^* : T\in\im(x) \}$, and that the quantum relation and adjacency operator associated with $\tau(x^*)^\op$ are $V^*$ and $\nabla^{-1/2} A^* \nabla^{1/2}$ respectively. 
    \item\label{prop:image_links:2}
    when $x$ is idempotent, \eqref{eq:what_in_V_from_A} holds.
\end{enumerate}
\end{proposition}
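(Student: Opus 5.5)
The plan is to make the action of $N\otimes M^\op$ on $L^2(N)\otimes\overline{L^2(M)} \cong HS(L^2(M),L^2(N))$ completely explicit, and then read off all four claims from it. Since $m^\op$ acts as $m^\top$ on $\overline{L^2(M)}$, we have $m^\top\overline{\eta} = \overline{m^*\eta}$. Hence $(n\otimes m^\op)$ sends $\xi\otimes\overline\eta$, that is $\rankone{\xi}{\eta}$, to $\rankone{n\xi}{m^*\eta} = n\,\rankone{\xi}{\eta}\,m$. By linearity and density, $(n\otimes m^\op)T = nTm$ for all $T$. As $x=\Psi'(A)=\sum_j x_j\otimes \sigma_{i/2}(y_j)^{*\op}$, this gives $xT = \sum_j x_j T\sigma_{i/2}(y_j)^*$, and since $\im(x) = x(HS(L^2(M),L^2(N)))$, claim \ref{prop:image_links:1} follows at once.

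For the modular bookkeeping I will use the identity $\nabla^{s} a \nabla^{-s} = Q^s a Q^{-s} = \sigma_{-is}(a)$, as operators on $L^2$ for $a$ in the algebra, which follows from $\nabla\Lambda(x)=\Lambda(QxQ^{-1})$. Write $T = \nabla^{1/4}S\nabla^{1/4}$ with $S$ arbitrary; this is harmless since $\nabla$ is invertible. Then
\[ \nabla^{1/4}(xS)\nabla^{1/4} = \sum_j \sigma_{-i/4}(x_j)\, T\, \sigma_{i/4}\big(\sigma_{i/2}(y_j)^*\big) = \sum_j \sigma_{-i/4}(x_j)\,T\,\sigma_{-i/4}(y_j^*), \]
using $\sigma_{i/2}(y)^*=\sigma_{-i/2}(y^*)$. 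So $V$ is the set of all $\sum_j \sigma_{-i/4}(x_j) T \sigma_{-i/4}(y_j^*)$ with $T$ arbitrary.

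Claim \ref{prop:image_links:4} now follows immediately. When $x$ is idempotent, $\im(x)=\{S : xS=S\}$, and conjugating this condition by $\nabla^{1/4}$ as above gives exactly \eqref{eq:what_in_V_from_A}.

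For \ref{prop:image_links:3}, the idea is that $M'\Lambda(1)=L^2(M)$, since $\Lambda(1)$ is cyclic and separating. Hence every rank-one operator, and so every $T\in\mc B(L^2(M),L^2(N))$, is a linear combination of operators $c'\rankone{\Lambda_N(1)}{\Lambda_M(1)}d'$ with $c'\in N'$ and $d'\in M'$. Substituting this $T$ into the formula above gives
\[ c'\sum_j \rankone{\Lambda(\sigma_{-i/4}(x_j))}{\Lambda(\sigma_{-i/4}(y_j^*)^*)}\, d'. \]
I then check that $\Lambda(\sigma_{-i/4}(x)) = \nabla^{1/4}\Lambda(x)$ and that $\Lambda(\sigma_{-i/4}(y^*)^*) = \Lambda(\sigma_{i/4}(y)) = \nabla^{-1/4}\Lambda(y)$. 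The inner sum is therefore $\nabla^{1/4}A\nabla^{-1/4}$, which gives $V = \lin N'\nabla^{1/4}A\nabla^{-1/4}M'$. No closure is needed, by finite-dimensionality. In particular $V$ is an $N'$-$M'$-bimodule, so it is a quantum relation.

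For \ref{prop:image_links:4}, compute $x^*=\sum_j x_j^*\otimes\sigma_{i/2}(y_j)^\op$, so that $\tau(x^*)^\op=\sum_j \sigma_{i/2}(y_j)\otimes x_j^{*\op}\in M\otimes N^\op$. By the action formula it sends $T'\in HS(L^2(N),L^2(M))$ to $\sum_j\sigma_{i/2}(y_j)T'x_j^* = \big(x(T'^*)\big)^*$. Hence its image is $\{T^*:T\in\im(x)\}$, and since $\nabla$ is self-adjoint the associated relation is $V^*$.

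The adjacency operator for \ref{prop:image_links:4} is $\Psi'^{-1}$ of $\tau(x^*)^\op$, namely $\sum_j\rankone{\Lambda(\sigma_{i/2}(y_j))}{\Lambda(\sigma_{-i/2}(x_j))}$. Using $\nabla^{-1/2}\Lambda(y)=\Lambda(\sigma_{i/2}(y))$ and $\nabla^{1/2}\Lambda(x)=\Lambda(\sigma_{-i/2}(x))$, this equals $\nabla^{-1/2}A^*\nabla^{1/2}$.

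The main obstacle is purely the sign and convention bookkeeping: which of $\sigma_{\pm i/4}$ appears on each side, and the correct form of the action of $M^\op$ via the transpose. I will fix these once, through the two identities $(n\otimes m^\op)T=nTm$ and $\nabla^s a\nabla^{-s}=\sigma_{-is}(a)$, and derive everything else from them.
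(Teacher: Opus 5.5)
Your proposal is correct and follows essentially the same route as the paper. You identify the action $(n\otimes m^\op)T = nTm$, conjugate by $\nabla^{1/4}$ using $\nabla^s a\nabla^{-s}=\sigma_{-is}(a)$, and read off all four items; the only difference is cosmetic: for the bimodule description you reduce to $c'\rankone{\Lambda(1)}{\Lambda(1)}d'$ via cyclicity of $\Lambda(1)$ for the commutants, whereas the paper writes the right multiplications explicitly as $J\sigma_{-i/4}(a^*)J$ and $J\sigma_{i/4}(b)J$. One small slip: the sentence introducing the idempotent case cites the label of the $\tau(x^*)^\op$ item rather than that of the final item.
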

\begin{proof}
We have that $x = \Psi'(A) = \sum_j x_j \otimes \sigma_{i/2}(y_j)^{*\op}$.  Thus a typical member of $\im(x)$ is
\[ \sum_j x_j(\xi) \otimes \overline{ \sigma_{i/2}(y_j)\eta }
\cong \sum_j \rankone{ x_j(\xi) }{ \sigma_{i/2}(y_j)\eta }
= \sum_j x_j \rankone{ \xi }{ \eta } \sigma_{i/2}(y_j)^*, \]
for $\xi\in L^2(N), \eta\in L^2(M)$.  Taking the linear span shows \ref{prop:image_links:1}.

Now set $\xi = \Lambda(a), \eta = \Lambda(b)$ for some $a\in N, b\in M$.  Then a typical member of $V$ is
\begin{align*}
\sum_j  &  \nabla^{1/4} x_j \rankone{ \Lambda(a) }{ \Lambda(b) } \sigma_{i/2}(y_j)^* \nabla^{1/4}
= \sum_j \sigma_{-i/4}(x_j) \nabla^{1/4} \rankone{ \Lambda(a) }{ \Lambda(b) } \nabla^{1/4} \sigma_{-i/4}(y_j^*)  \\
&= \sum_j \rankone{ \Lambda(\sigma_{-i/4}(x_j) \sigma_{-i/4}(a) }{ \Lambda( \sigma_{i/4}(y_j) \sigma_{-i/4}(b) ) } \\
&= \sum_j J \sigma_{-i/4}(a^*) J \rankone{ \Lambda(\sigma_{-i/4}(x_j) }{ J \sigma_{-i/4}(b^*) J \Lambda( \sigma_{i/4}(y_j) ) }  \\
&= J \sigma_{-i/4}(a^*) J \nabla^{1/4} \sum_j \rankone{ \Lambda(x_j) }{ \Lambda(y_j) } \nabla^{-1/4} J \sigma_{i/4}(b) J
= a' \nabla^{1/4} A \nabla^{-1/4} b',
\end{align*}
say, where $a' = J \sigma_{-i/4}(a^*) J\in M'$ and $b' = J \sigma_{i/4}(b) J \in N'$.  Here we used that $\sigma_{i/2}(y_j)^* = \nabla^{1/2} y_j^* \nabla^{-1/2}$ and so forth.
As any member of $M'$ (respectively, $N'$) can arise, taking linear spans shows \ref{prop:image_links:3}.

For \ref{prop:image_links:4}, we note that $\tau(x)^{*\op} = \sum_j \sigma_{i/2}(y_j) \otimes x_j^{*\op}$ and so $\im(\tau(x)^{*\op}) = \{ \sum_j \sigma_{i/2}(y_j) T x_j^{*} : T\in HS(L^2(N), L^2(M)) \} = \{ T^* : T \in\im(x) \}$ by comparison with \ref{prop:image_links:1}.  The associated $V$ is hence $\{ \nabla^{1/4} T \nabla^{1/4} : T^*\in\im(x)\} = V^*$.  The associated quantum adjacency operator is $\sum_j \rankone{ \Lambda(\sigma_{i/2}(y_j) }{ \Lambda(\sigma_{-i/2}(x_j)) } = \nabla^{-1/2} A^* \nabla^{1/2}$.

We can re-write the right-hand-side of \eqref{eq:what_in_V_from_A} as
\begin{align*}
& \sum_j x_j \nabla^{-1/4} T \nabla^{1/4} y_j^* = \nabla^{-1/4} T \nabla^{1/4} \\
\iff\quad & \sum_j x_j \nabla^{-1/4} T \nabla^{-1/4} \sigma_{-i/2}(y_j^*) = \nabla^{-1/4} T \nabla^{-1/4}
\end{align*}
because $\sigma_{-i/4}(x_j) = \nabla^{1/4} x_j \nabla^{-1/4}$, and so forth.  We now recognise the action of $x$, so this condition is equivalent to $\nabla^{-1/4} T \nabla^{-1/4}$ begin a fixed point of $x$.  When $x$ is idempotent, this is equivalent to $\nabla^{-1/4} T \nabla^{-1/4} \in \im(x)$, by definition, that $T\in V$, hence verifying \ref{prop:image_links:2}.
\end{proof}

\begin{remark}\label{rem:nabla_or_Q}
As in \cite[Remark~2.7]{daws2025quantumgraphsinfinitedimensionshilbertschmidts}, when defining $V$, and in point \ref{prop:image_links:3}, we can replace usage of $\nabla_N$ by $Q_N$, and $\nabla_M$ by $Q_M$.
\end{remark}

\begin{remark}\label{rem:delta_forms}
So far we have proceeded quite abstractly, but as $M$ is finite-dimensional, of course $M$ is just a direct-sum of matrix algebras, say $M = \bigoplus_{\alpha\in I} \mathbb M_{n(\alpha)}$ for some finite set $I$.  Let $1_\alpha$ be the unit of the factor $\mathbb M_{n(\alpha)}$, so $\{ 1_\alpha \}$ is the collection of minimal central idempotents of $M$.  Let $(e^\alpha_{i,j})_{i,j=1}^{n(\alpha)}$ be the matrix units of $\mathbb M_{n(\alpha)}$.  By \cite[Lemma~7.12]{daws_quantum_graphs}, the Markov trace is defined by $\Tr_M(x) = \sum_\alpha n(\alpha) \Tr(1_\alpha x)$ where $\Tr$ is the non-normalised trace on $\mathbb M_{n(\alpha)}$.  Adapting the calculation from before \cite[Definition~2.9]{daws_quantum_graphs}, for example, one can check that $\Tr_M$ gives that $mm^*=1$.

Some further remarks can be found in \cite[Proposition~2.1]{Banica_Sym_Gen_Coaction}, but be aware that in this reference Banica assumes that all traces are normalised, which is not our convention.
\end{remark}

We now calculate explicitly a link between the action of $A$ and $\Psi'(A)$; this should be compared to \cite[Section~6]{daws2025quantumgraphsinfinitedimensionshilbertschmidts}, but we give here a direct calculation.

\begin{lemma}\label{lem:action_A_to_Psi'A}
Let $A \colon M \to N$, and let $\Psi'(A) \in N\otimes M^\op$ act on $L^2(N) \otimes \overline{L^2(M)}$.  Then for $b_0,b_1\in M$,
\[ \big( \xi_1 \big| A(b_1^*b_0) \xi_0 \big)
= \big( \xi_1\otimes \overline{J\Lambda(b_1)} \big| \Psi'(A)(\xi_0 \otimes\overline{J\Lambda(b_0)}) \big)
\qquad (\xi_0,\xi_1 \in L^2(N)). \]
\end{lemma}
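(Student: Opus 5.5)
Both sides of the identity are linear in $A$ (the left side through $A\mapsto A(b_1^*b_0)$, the right side through the linear map $\Psi'$). Every $A\colon M\to N$ is a finite sum of rank-one maps, so I will prove the identity only for $A=\rankone{\Lambda(x)}{\Lambda(y)}$ with $x\in N$, $y\in M$. As a map $M\to N$ this is $A(a)=(\Lambda(y)|\Lambda(a))\,x=\varphi_M(y^*a)\,x$. Hence the left-hand side is
\[ \big(\xi_1\big|A(b_1^*b_0)\xi_0\big)=\varphi_M(y^*b_1^*b_0)\,(\xi_1|x\xi_0). \]

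For the right-hand side, $\Psi'(A)=x\otimes\sigma_{i/2}(y)^{*\op}$. By the conventions recalled before Proposition~\ref{prop:image_links}, $z^\op$ acts on $\overline{L^2(M)}$ as $z^\top$, and $T^\top\overline\eta=\overline{T^*\eta}$. The inner product then factorises, giving $(\xi_1|x\xi_0)$ times
\[ \big(\overline{J\Lambda(b_1)}\big|\overline{\sigma_{i/2}(y)J\Lambda(b_0)}\big)=\big(\sigma_{i/2}(y)J\Lambda(b_0)\big|J\Lambda(b_1)\big), \]
using $(\overline a|\overline b)=(b|a)$ in the conjugate space. It therefore remains to show the scalar identity
\[ \big(\sigma_{i/2}(y)J\Lambda(b_0)\big|J\Lambda(b_1)\big)=\varphi_M(y^*b_1^*b_0). \]

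To prove the scalar identity I will substitute $J\Lambda(b)=\Lambda(\sigma_{-i/2}(b^*))$, so that $\sigma_{i/2}(y)J\Lambda(b_0)=\Lambda\big(\sigma_{i/2}(y)\sigma_{-i/2}(b_0^*)\big)$. The inner product then becomes a single value of $\varphi_M$ on a product. I will use $\sigma_{i/2}(y)^*=\sigma_{-i/2}(y^*)$ and the fact that $\sigma_z$ is a homomorphism to collect everything under one $\sigma_{-i/2}$. Then $\varphi_M\circ\sigma_z=\varphi_M$ removes it, leaving $\varphi_M$ evaluated on a product of $b_0$, $y^*$ and $b_1^*$ in some cyclic order. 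Finally I will rotate this into the order $y^*b_1^*b_0$ using the KMS relation in the form $\varphi_M(ab)=\varphi_M(\sigma_i(b)a)$, which follows directly from $\varphi_M=\Tr_M(Q_M\,\cdot\,)$ and $\sigma_t(x)=Q^{it}xQ^{-it}$.

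The main obstacle is this last bookkeeping step, not the structure of the argument. A quick pass suggests the naive computation produces $\varphi_M(b_0y^*b_1^*)$, which agrees with $\varphi_M(y^*b_1^*b_0)$ only after a modular twist. So I expect to have to check carefully four conventions: the formula for $J$ (whether it is $\sigma_{-i/2}(b^*)$ or $\sigma_{i/2}(b^*)$ that appears); the identification $\Lambda^\op(x^\op)=\overline{\Lambda(x^*)}$ and how $x^\op$ acts as $x^\top$ on $\overline{L^2(M)}$; which slot of the conjugate inner product is linear; and the direction of the KMS rotation. If a stray $\sigma_{\pm i}$ remains after this check, I would first try reversing the transpose convention, i.e.\ writing $\overline{\sigma_{i/2}(y)^{*}{}^*J\Lambda(b_0)}$ with the other adjoint, before suspecting the $\sigma_{i/2}$ in the definition of $\Psi'$. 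In each variant the modular factors should cancel exactly against the $\sigma_{i/2}$ built into $\Psi'$ and the $\sigma_{-i/2}$ built into $J$.
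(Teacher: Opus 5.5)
Your route is essentially the paper's. You reduce to a single rank-one term (the paper just carries the sum over $j$), you factor the inner product exactly as the paper does, and you settle the scalar identity using $\varphi_M\circ\sigma_z=\varphi_M$ and the KMS relation $\varphi_M(ab)=\varphi_M(\sigma_i(b)a)$. The only difference is cosmetic. The paper uses antiunitarity of $J$ to rewrite the scalar as $(\Lambda(b_1)|J\sigma_{i/2}(y)J\Lambda(b_0))=(\Lambda(b_1)|\Lambda(b_0\sigma_{-i}(y^*)))=\varphi_M(b_1^*b_0\sigma_{-i}(y^*))$ and then applies KMS once. You instead substitute the formula for $J\Lambda$ in both slots.

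The obstacle you foresee comes from a slip in your quick pass, not from the conventions, so do not change any of them. Reversing the transpose convention would make the right-hand side linear rather than conjugate-linear in $y$, so it cannot be the fix.

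The adjoint of $\sigma_{i/2}(y)\sigma_{-i/2}(b_0^*)$ must be computed with $\sigma_z(a)^*=\sigma_{\bar z}(a^*)$. This gives $\sigma_{i/2}(b_0)\sigma_{-i/2}(y^*)$, whose first factor is $\sigma_{-i/2}(\sigma_i(b_0))$, not $\sigma_{-i/2}(b_0)$. Hence the scalar equals $\varphi_M\big(\sigma_{-i/2}(\sigma_i(b_0)y^*b_1^*)\big)=\varphi_M(\sigma_i(b_0)y^*b_1^*)$. KMS with $b=b_0$, $a=y^*b_1^*$ then gives exactly $\varphi_M(y^*b_1^*b_0)$.

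So a ``stray'' $\sigma_i$ does survive, and it is precisely what the KMS rotation consumes. Your final step as literally described would fail for non-tracial $\varphi_M$: an untwisted product such as $\varphi_M(b_0y^*b_1^*)$ cannot be rotated into $\varphi_M(y^*b_1^*b_0)$.
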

\begin{proof}
Let $x = \Psi'(A) =  x_j \otimes \sigma_{i/2}(y_j)^{*\op}$, say, so $A = \sum_j \rankone{\Lambda(x_j}{\Lambda(y_j)}$, meaning that $A\colon M \to N$ is $b \mapsto A(b) = \sum_j \varphi(y_j^*b) x_j$.  For $b_0, b_1\in M$ and $\xi_0,\xi_1\in L^2(N)$, we have
\begin{align*}
& \big( \xi_1\otimes \overline{J\Lambda(b_1)} \big| \Psi'(A)(\xi_0 \otimes\overline{J\Lambda(b_0)}) \big)
= \sum_j (\xi_1 | x_j \xi_0 ) ( \overline{J\Lambda(b_1)} | \overline{\sigma_{i/2}(y_j)J\Lambda(b_0)} ) \\
&= \sum_j (\xi_1 | x_j \xi_0 ) (\Lambda(b_1) | J\sigma_{i/2}(y_j)J\Lambda(b_0) ) 
= \sum_j (\xi_1 | x_j \xi_0 ) (\Lambda(b_1) | \Lambda(b_0 \sigma_{-i}(y_j^*)) )   \\
&= \sum_j (\xi_1 | x_j \xi_0 ) \varphi(b_1^* b_0 \sigma_{-i}(y_j^*))
= \sum_j (\xi_1 | x_j \xi_0 ) \varphi(y_j^* b_1^* b_0)
= (\xi_1 | A(b_1^*b_0) \xi_0),
\end{align*}
as claimed.
\end{proof}

\subsection{CP maps to quantum relations revisited}\label{sec:CP_to_QR_again}

Now is an appropriate point to discuss (some of) Verdon's work from \cite{Verdon_CovQuantumCombs}.\footnote{The reader is warned that some diagrams in \cite{Verdon_CovQuantumCombs} are corrupted, for example (14), and so the arXiv version is more readable, arXiv:2302.07776 [math.OA] see \url{https://arxiv.org/pdf/2302.07776}.}
The setting in that paper is an abstract $C^*$-$2$-Category: the motivation is that one can take this category to be $\rep(G)$ for some compact (quantum) group $G$, and then constructions are automatically covariant with respect to the group action.  The algebra objects here are ``separable standard Frobenius algebras'', \cite[Appendix~A]{Verdon_CovQuantumCombs}: if we take our category to be just $2\Hilb$, then we recover finite-dimensional $C^*$-algebras $M$, and the ``separable'' condition corresponds to a choice of trace with $m^*m=1$, that is, what we call the Markov Trace $\Tr_M$.  In this abstract setting, it is not clear what ``Hilbert space'' an algebra acts on, so Verdon's definition of a quantum relation, \cite[Definition~3.2]{Verdon_CovQuantumCombs}, is the analogue of the projection $e = \Psi'(A)\in N\otimes M^\op$ considered above, as $e$ is defined just using the algebra structure.  Given a CP map $A \colon M \to N$ (compare \cite[Definition~2.6]{Verdon_CovQuantumCombs}) the ``underlying quantum relation'', \cite[Definition~3.2]{Verdon_CovQuantumCombs} is, in our language, the support projection of $x=\Psi'(A)$.  As $A$ CP is equivalent to $x$ being positive, the support projection is just the projection onto the image of $x$.  As effectively we work here only in the tracial situation, we see that we obtain exactly the same relation between $A$ and $V = \im(x)$ as in Proposition~\ref{prop:image_links}.

It is shown in \cite[Proposition~3.5]{Verdon_CovQuantumCombs} that the map $A \to V$ is a full, dagger-preserving functor.  Let us give an analogue of this in our setting, where $\varphi_M,\varphi_N$ can be arbitrary.  We give the category of CP maps the dagger structure of the KMS adjoint, $A \mapsto A^*_{KMS} = \nabla^{-1/2} A^* \nabla^{1/2} = J A^* J$, as discussed before.  Proposition~\ref{prop:image_links}\ref{prop:image_links:4} shows that $A^*_{KMS}$ is CP, because $\Psi'(A^*_{KMS}) = \tau(\Psi'(A))^{\op *}$ is positive.

\begin{proposition}\label{prop:functor_A_to_V}
The map $A \to V$ given by Proposition~\ref{prop:image_links} is a functor which preserves the adjoint, and which is full (surjective on hom sets).
\end{proposition}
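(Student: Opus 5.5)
The map $A\mapsto V$ sends a CP map $A\colon M\to N$ to $V = N'\nabla^{1/4}A\nabla^{-1/4}M'$, by Proposition~\ref{prop:image_links}\ref{prop:image_links:3}. The plan is to verify identities, composition, the dagger and fullness in turn. The cleanest route is to work with this bimodule description rather than with images of $\Psi'(A)$. Throughout we use that $\nabla_M^{it}$ implements $\sigma_t$ on $M$ and normalises $M'$. In finite dimensions $\nabla^{1/4}$ is an invertible positive element of $M\vee M'$, indeed $\nabla = Q\,JQ^{-1}J$, so conjugating by powers of $\nabla$ maps $M'$-bimodules to $M'$-bimodules.

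\emph{Identity.} The identity map $\id\colon M\to M$ is CP. Since $\nabla^{1/4}\id\,\nabla^{-1/4}=1$, its relation is $M'1M' = M'$, the identity relation.

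\emph{Composition.} Given CP maps $A\colon M_1\to M_2$ and $B\colon M_2\to M_3$, we have
\[ \nabla_3^{1/4}BA\nabla_1^{-1/4} = (\nabla_3^{1/4}B\nabla_2^{-1/4})(\nabla_2^{1/4}A\nabla_1^{-1/4}). \]
So $V_{BA} \subseteq V_B\circ V_A$ is immediate. For the reverse inclusion we must show that $(\nabla^{1/4}B\nabla^{-1/4})\,c'\,(\nabla^{1/4}A\nabla^{-1/4})$ lies in $V_{BA}$ for every $c'\in M_2'$. This is the main obstacle, since the middle commutant element must be absorbed. I would write $A$ via a Stinespring/Kraus form adapted to $L^2$, namely $A = \sum_k a_k\,J b_k J$-type expressions, or better use $\Psi'(A)$. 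Alternatively I would use that $V_A$ equals $V^{\theta}$-style relations: Proposition~\ref{prop:image_links}\ref{prop:image_links:1} identifies $\im(x)$ with $\{\sum_j x_j T \sigma_{i/2}(y_j)^*\}$. First I would try to show that the support of $\Psi'(BA)$ equals the support of the ``relation product'' of the supports of $\Psi'(B)$ and $\Psi'(A)$. I would do this via Lemma~\ref{lem:action_A_to_Psi'A}, which expresses matrix coefficients of $\Psi'(A)$ through $A$, so that positivity lets kernels be compared: a vector is killed by $\Psi'(BA)$ iff certain compressions vanish, using the CP factorisation. A possibly cleaner alternative is to exhibit $V_A$ as $V^{\theta}$ for a suitable CP map $\theta$ built from $A$ (the anticipated Proposition~\ref{prop:A_vs_theta}) and then invoke the functoriality Proposition~\ref{prop:cp_to_qr_composition}, where the absorption of $M_2'$ was handled by the bimodule property.

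\emph{Adjoint.} By Proposition~\ref{prop:image_links}\ref{prop:image_links:4}, $A^*_{KMS}=\nabla^{-1/2}A^*\nabla^{1/2}$ has $\Psi'$-image $\tau(x^*)^\op$, which equals $\tau(x)^\op$ since $x$ is positive. Its associated relation is $V^*$, so the dagger is preserved.

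\emph{Fullness.} Given a quantum relation $V$ from $M$ to $N$, let $e\in N\otimes M^\op$ be the projection onto $\{\nabla^{-1/4}T\nabla^{-1/4}:T\in V\}$. This subspace is $N'\otimes M'^\op$-invariant because $\nabla^{\pm1/4}$ normalises the commutants. Take $A=\Psi'^{-1}(e)$, which is CP by Theorem~\ref{thm:Psi'_CP_idem_to_proj}. Proposition~\ref{prop:image_links}\ref{prop:image_links:2} then gives that its relation is $V$. Hence the functor is surjective on hom sets.
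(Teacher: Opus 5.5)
Your identity, adjoint and fullness steps are exactly the paper's. Your ``cleaner alternative'' for composition is precisely how the paper argues (Proposition~\ref{prop:composition_of_adj_ops}, via Propositions~\ref{prop:A_vs_theta} and~\ref{prop:cp_to_qr_composition}), so commit to that route; the vaguer support-comparison idea is not needed. One detail: for $\theta_A=A^*_{KMS}$ you only get $V^{\theta_A}=\nabla_2^{1/4}V_A\nabla_1^{-1/4}$, not $V_A$ itself. The inner factors cancel in $V^{\theta_B}\circ V^{\theta_A}=V^{\theta_A\circ\theta_B}=V^{(BA)^*_{KMS}}$ (note the order reversal), giving $\nabla_3^{1/4}(V_B\circ V_A)\nabla_1^{-1/4}=\nabla_3^{1/4}V_{BA}\nabla_1^{-1/4}$, as required.
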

\begin{proof}
Proposition~\ref{prop:image_links}\ref{prop:image_links:4} shows that this map is $*$-preserving, and Proposition~\ref{prop:image_links}\ref{prop:image_links:3} shows that when $A=\id_M$ we have that $V=M'$, these being the identity morphisms of the relevant categories.  Proposition~\ref{prop:composition_of_adj_ops} below shows that given $A_1\colon M_1\to M_2$ and $A_2\colon M_2\to M_3$ CP maps, with associated quantum relations $V_1, V_2$, we have that $A_2\circ A_1$ gives the quantum relation $V_2\circ V_1$.  We hence have a functor.

Given any quantum relation $V$, let $e$ be the projection onto $\nabla_N^{-1/4} V \nabla_M^{-1/4}$, and let $A = \Psi'^{-1}(e)$, so by Theorem~\ref{thm:Psi'_CP_idem_to_proj}, $A$ is in particular CP, and by Proposition~\ref{prop:image_links}, $V$ is associated to $A$.  So every $V$ arises from some $A$, showing fullness.
\end{proof}

We now have two ways to associate a CP map with a quantum relation: either consider $\theta\colon N\to M$ and form $V^\theta$ as in Section~\ref{sec:channels_to_relations}, or consider $A\colon M \to N$, thought of as a member of $\mc B(L^2(M), L^2(N))$, form $\Psi'(A)$, and form $V$.  These are obviously different, but it seems possible that if $\theta = A^*_{KMS}$ (so at least (co)domains are correct) then we might get that $V^\theta$ is associated to $\Psi'(A)$.  We shall see that this is true in the tracial situation, but the general case is more complicated.

\begin{proposition}\label{prop:A_vs_theta}
Let $A, V$ be linked as in Proposition~\ref{prop:image_links}, with $A$ completely positive, and set $\theta = A^*_{KMS} \colon N \to M$, which is also CP.  Then $V^\theta = \nabla_N^{1/4} V \nabla_M^{-1/4}$.  In particular, when $\varphi_M, \varphi_N$ are traces, $V^\theta = V$.
\end{proposition}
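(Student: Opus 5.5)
The plan is to compute a Kraus (or Stinespring) representation of $\theta = A^*_{KMS}$ explicitly from $x = \Psi'(A)$. Then Proposition~\ref{prop:kraus_to_V} describes $V^\theta$, and we compare the result with the description of $V$ in Proposition~\ref{prop:image_links}\ref{prop:image_links:3}, namely $V = N'\nabla^{1/4}A\nabla^{-1/4}M'$.

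\textbf{Reformulating the target.} First note that conjugation by $\nabla^{1/4}$ preserves commutants. In finite dimensions we have $\nabla = Q\,JQ^{-1}J$, so $\nabla^{1/4}a'\nabla^{-1/4} = JQ^{-1/4}J\,a'\,JQ^{1/4}J \in N'$ for $a'\in N'$, and similarly for $M'$. Hence the claim is equivalent to
\[ V^\theta = \lin\{ a'\,\nabla_N^{1/2} A \nabla_M^{-1/2}\, b' : a'\in N', b'\in M' \}. \]
By Remark~\ref{rem:nabla_or_Q} we may use $Q$ in place of $\nabla$ where convenient. Since $V^\theta$ is a bimodule, it suffices to show two things: the Kraus operators of $\theta$ lie in the right-hand side, and $\nabla_N^{1/2}A\nabla_M^{-1/2}M'$ lies in $V^\theta$.

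\textbf{Constructing the Kraus operators.} Since $A$ is CP, $x$ is positive, so write $x = R^*R$ (or $x = \sum_k r_k r_k^*$ with $r_k$ spanning $\im(x)$). Lemma~\ref{lem:action_A_to_Psi'A} then gives
\[ A(b_1^*b_0) = W_{b_1}^* R^* R W_{b_0}, \qquad W_b\colon \xi\mapsto \xi\otimes\overline{J\Lambda(b)}. \]
This is a Stinespring-type formula for $A$, valid on the spanning set $\{b_1^*b_0\}$. Next, use the defining duality of the KMS adjoint, $\varphi_M(\theta(y)^*b) = \varphi_N(y^*A(b))$ after twisting by $\sigma_{\pm i/2}$, to convert this into a formula $\theta(y) = \sum_k c_k^* y c_k$.

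The Kraus operators $c_k\colon L^2(M)\to L^2(N)$ should come out as $c_k = \nabla_N^{1/4} r_k \nabla_M^{-1/4}$ up to right multiplication by elements of $M'$, where the $r_k\in\im(x)\subseteq HS(L^2(M),L^2(N))$. Indeed, identifying $L^2(N)\otimes\overline{L^2(M)}$ with Hilbert--Schmidt operators, the vectors $\overline{J\Lambda(b)}$ turn into right multiplication by $J b J$-type elements of $M'$.

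\textbf{Comparing spans.} By Proposition~\ref{prop:kraus_to_V},
\[ V^\theta = \lin N'\{c_k\} = \nabla_N^{1/4}\big(N'\,\im(x)\big)\nabla_M^{-1/4}. \]
By the definition of $V$ in Proposition~\ref{prop:image_links}, $\im(x) = \nabla_N^{-1/4}V\nabla_M^{-1/4}$, and $\im(x)$ is already an $N'$-$M'$-bimodule. Substituting gives
\[ V^\theta = \nabla_N^{1/4}\nabla_N^{-1/4}V\nabla_M^{-1/4}\nabla_M^{-1/4}\cdot(\text{correction}). \]
Matching the exponents to obtain exactly $\nabla_N^{1/4}V\nabla_M^{-1/4}$ is where the bookkeeping must be done carefully. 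In the tracial case $\nabla = 1$, so the argument collapses to $V^\theta = \im(x) = V$.

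\textbf{Main obstacle.} I expect the main difficulty to be this modular bookkeeping: tracking the $\sigma_{\pm i/4}$, $\sigma_{i/2}$ twists through the KMS adjoint $J A^* J$, and through the identification of $\overline{J\Lambda(b)}$ with right $M'$-actions. In particular, it must be checked that the left-over factors on the $M$ side lie in $M'$ and can be absorbed, rather than leaving an extra $\nabla_M^{\pm1/2}$. I would first verify the exponents on rank-one $A = \rankone{\Lambda(x_1)}{\Lambda(y_1)}$ with $x$ positive replaced formally by $x = zz^*$, using the computation in the proof of Proposition~\ref{prop:image_links}\ref{prop:image_links:3} as a template.
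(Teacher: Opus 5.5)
Your reformulation is sound: since conjugation by $\nabla^{1/4}$ preserves the commutants, the claim is equivalent to $V^\theta = N'\nabla_N^{1/2}A\nabla_M^{-1/2}M'$, that is, to $V^\theta = \nabla_N^{1/2}\im(x)$ with $x=\Psi'(A)$. The gap is in the step that carries all the content. You never actually produce Kraus operators for $\theta=A^*_{KMS}$; the sentence ``use the defining duality of the KMS adjoint to convert this'' stands in for the whole proof. Moreover, the form you predict for them is wrong.

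Take $c_k=\nabla_N^{1/4}r_k\nabla_M^{-1/4}$ with $r_k\in\im(x)$. Then, as you compute, $V^\theta=\nabla_N^{1/4}\im(x)\nabla_M^{-1/4}=V\nabla_M^{-1/2}=N'\nabla_N^{1/4}A\nabla_M^{-3/4}M'$, which contradicts your own reformulation. Test it on $A=\id_M$: then $\theta=\id_M$, $V=M'$, and genuinely $V^\theta=M'$. Your formula instead gives $M'\nabla_M^{-1/2}=M'Q_M^{-1/2}$, which contains $Q_M^{-1/2}\notin M'$ as soon as $\varphi_M$ is not a trace. The discrepancy is an element of $M$, not of a commutant, so no bookkeeping ``(correction)'' can absorb it. Modulo the commutants, the Kraus operators must have the form $\nabla_N^{1/2}r$ with $r\in\im(x)$, with no modular factor on the $M$ side.

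There is also a structural problem with your starting point. The identity $A(b_1^*b_0)=W_{b_1}^*xW_{b_0}$ is not a Stinespring dilation, because the input enters through the vectors $\overline{J\Lambda(b)}$ rather than through a representation. Dualising does not by itself fix that.

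The paper proceeds as follows:
\begin{itemize}
\item It applies Lemma~\ref{lem:action_A_to_Psi'A} directly to $\theta$, whose image under $\Psi'$ is $\tau(x)^{\op*}$ by Proposition~\ref{prop:image_links}\ref{prop:image_links:4}.
\item It factors $x=y^*y$, so that $\tau(x)^{\op*}=\tau(y)^{\op}\tau(y)^{\op*}$.
\item It uses the unitary $j\colon\overline{L^2(N)}\to L^2(N)$, $\overline\xi\mapsto J\xi$, which satisfies $(a_1jy_1^\op\overline{\Lambda(1)}|a_0jy_0^\op\overline{\Lambda(1)})=(y_1^\op\overline{J\Lambda(a_1)}|y_0^\op\overline{J\Lambda(a_0)})$.
\end{itemize}
This converts the $a$-dependence of the vectors into $a$ acting on the second leg. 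It gives a genuine dilation $u\xi=(1\otimes j)\tau(y)^{\op*}(\xi\otimes\overline{\Lambda_N(1)})$ with $\theta(a)=u^*(1\otimes a)u$. Slicing $u$ and using $J\Lambda(ay_i)=\sigma_{-i/2}(y_i^*)J\Lambda(a)$ identifies $V^\theta$ with the image of $(\sigma_{i/2}\otimes\id)(y)^*$. Since $(\sigma_{i/2}\otimes\id)(y)=(\nabla^{-1/2}\otimes 1)y(\nabla^{1/2}\otimes 1)$, that image is $(\nabla_N^{1/2}\otimes 1)\im(x)$. Hence $V^\theta=\nabla_N^{1/2}\im(x)=\nabla_N^{1/4}V\nabla_M^{-1/4}$. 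The one-sided factor $\nabla_N^{1/2}$ comes exactly from moving $J$ past $ay_i$ on the $N$ side, and this is what your exponent guess misses.
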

\begin{proof}
Use Proposition~\ref{prop:image_links} to associate $A$ with the positive operator $\Psi'(A)$ and quantum relation $V = \nabla^{1/4} \im\Psi'(A) \nabla^{1/4}$.  By Proposition~\ref{prop:image_links}\ref{prop:image_links:4}, $A^*_{KMS}$ is associated with the positive operator $\tau(\Psi'(A))^{\op *}$.  Then Lemma~\ref{lem:action_A_to_Psi'A} applied to $A^*_{KMS}$ shows that for $a_0,a_1\in N, b_0,b_1\in M$,
\begin{equation}\label{eq:KMS_59}
\big( \Lambda(b_1) \big| A^*_{KMS}(a_1^*a_0) \Lambda(b_0) \big)
= \big( \Lambda(b_1) \otimes \overline{J\Lambda(a_1)} \big| \tau(\Psi'(A))^{\op *} (\Lambda(b_0) \otimes \overline{J\Lambda(a_0)}) \big).
\end{equation}
We aim to find a Stinespring dilation of $\theta$; again this is similar to \cite[Section~6]{daws2025quantumgraphsinfinitedimensionshilbertschmidts}, though we'll use \eqref{eq:KMS_59} directly.  Let $j \colon \overline{L^2(N)} \to L^2(N); \overline\xi \mapsto J\xi$, a unitary, as $J$ is an anti-linear involution.  Then for $a_0,a_1,y_0,y_1\in N$ we find that
\begin{align}
& ( a_1 j y_1^\op \overline{\Lambda(1)} | a_0 j y_0^\op \overline{\Lambda(1)} )
= ( a_1 J \Lambda(y_1^*) | a_0 J \Lambda(y_0^*) )
= ( \Lambda( a_1 \sigma_{-i/2}(y_1) ) | \Lambda( a_0 \sigma_{-i/2}(y_0) ) )  \notag\\
&= \varphi_N( \sigma_{i/2}(y_1^*) a_1^* a_0 \sigma_{-i/2}(y_0) )
= \varphi_N( a_1^* a_0 \sigma_{-i/2}(y_0)\sigma_{-i/2}(y_1^*) )
= ( \Lambda(a_1) | Jy_1y_0^*J \Lambda(a_0) )  \notag\\
&= ( y_0^* J \Lambda(a_0) | y_1^* J \Lambda(a_1) )
= ( y_1^\op \overline{J\Lambda(a_1)} | y_0^\op \overline{J\Lambda(a_0)} ).  \label{eq:j_conj}
\end{align}
Choose some $y\in N\otimes M^\op$ with $\Psi'(A) = y^*y$, so $\tau(\Psi'(A))^{\op*} = \tau(y)^\op \tau(y)^{\op*}$ because $x\mapsto \tau(x)^{\op *}$ is a homomorphism, and define
\[ u \colon L^2(M) \to L^2(M) \otimes L^2(N); \quad \xi \mapsto (1\otimes j)\tau(y)^{\op*}(\xi \otimes \overline{\Lambda_N(1)}). \]
This gives a dilation of $\theta$, because
\begin{align*}
\big( \Lambda(b_1) \big| &   u^*( 1 \otimes a_1^*a_0 )u \Lambda(b_0) \big)  \\
&= \big( (1\otimes a_1j) \tau(y)^{\op*}(\Lambda(b_1)\otimes\overline{\Lambda(1)}) \big| (1\otimes a_0j)\tau(y)^{\op*}(\Lambda(b_1)\otimes\overline{\Lambda(1)}) \big)  \\
&= \big( \tau(y)^{\op*} (\Lambda(b_1)\otimes\overline{J\Lambda(a_1)}) \big| \tau(y)^{\op*} (\Lambda(b_0)\otimes\overline{J\Lambda(a_0}) \big)  \\
&=\big( \Lambda(b_1) \big| A^*_{KMS}(a_1^*a_0) \Lambda(b_0) \big),
\end{align*}
from \eqref{eq:j_conj} and \eqref{eq:KMS_59}.

As we now have a dilation, Theorem~\ref{thm:UCP_to_QR} shows that
\[ V^\theta = \lin\{ y' (\langle \xi| \otimes 1) u : y'\in N', \xi\in L^2(M) \}. \]
Let $y = \sum_i y_i \otimes x_i^\op \in N\otimes M^\op$, choose $b\in M, a\in N$ and set $y' = JaJ \in N'$.  Then for $\xi\in L^2(M)$,
\begin{align*}
y'(\langle \Lambda(b)|\otimes 1)u(\xi)
&= \sum_i y' j (\langle \Lambda(b)|\otimes 1) (x_i^*\xi \otimes y_i^{*\op}\overline{\Lambda(1)})
= \sum_i y' j \overline{\Lambda(y_i)} (\Lambda(b)|x_i^*\xi)  \\
&= \sum_i JaJJ\Lambda(y_i) (\Lambda(x_ib)|\xi)
= \sum_i \rankone{ J\Lambda(ay_i) }{ \Lambda(x_ib) } \xi.
\end{align*}
As $J\Lambda(ay_i) = \sigma_{-i/2}(y_i^*) J\Lambda(a)$, we see that
\[ y'(\langle \Lambda(b)|\otimes 1)u = \sum_i \sigma_{-i/2}(y_i^*) \rankone{J\Lambda(a)}{\Lambda(b)} x_i^*,  \]
and so taking the linear span over $a,b$ shows that $V^\theta$ is the image of $\sum_i \sigma_{i/2}(y_i)^* \otimes x_i^{\op*} = (\sigma_{i/2}\otimes\id)(y)^*$ acting on $L^2(N) \otimes \overline{L^2(M)}$.  As $(\sigma_{i/2}\otimes\id)(y) = (\nabla^{-1/2}\otimes 1) y (\nabla^{1/2}\otimes 1)$, we see that
\begin{align*}
\im (\sigma_{i/2}\otimes\id)(y)^*
&= \ker (\sigma_{i/2}\otimes\id)(y) = \ker y (\nabla^{1/2}\otimes 1) = \ker (\nabla^{1/2}\otimes 1) y^*y (\nabla^{1/2}\otimes 1) \\
&= \im (\nabla^{1/2}\otimes 1) \Psi'(A) (\nabla^{1/2}\otimes 1)
= (\nabla^{1/2}\otimes 1) \im \Psi'(A),
\end{align*}
where here we consider $\im \Psi'(A) \subseteq L^2(N) \otimes \overline{L^2(M)}$.  If instead we regard this space as being $\mc B(L^2(M), L^2(N))$, we obtain $V^\theta = \nabla^{1/2} \im \Psi'(A) = \nabla^{1/2} \nabla^{-1/4} V \nabla^{-1/4} = \nabla^{1/4} V \nabla^{-1/4}$, as claimed.
\end{proof}

\begin{remark}\label{rem:can_we_remove_mod}
It would of course have been nice if $V^\theta = V$ instead of $\nabla_N^{1/4} V \nabla_M^{-1/4}$; we wonder if some of our ``choices'' has lead to this appearance of the modular operators.  In \cite[Section~5]{daws_quantum_graphs} we considered may ways to get a bijection between $\mc B(L^2(M))$ and $M\otimes M^\op$, compatible with the Schur product.  Only $\Psi' = \Psi'_{0,1/2}$ (or its symmetric counterpart $\Psi = \Psi_{1/2,0}$) give a bijection between completely positive $A$ and positive $x\in M\otimes M^\op$.  Once we are using $\Psi'$, considering $A^*_{KMS}$ is natural, because it preserves being CP, and comes from the natural ``involution'' $x\mapsto \tau(x)^{*\op}$.  The proof of Proposition~\ref{prop:A_vs_theta} shows that then $V^\theta = \nabla^{1/2} \im\Psi'(A)$.  Of course, we could \emph{define} the quantum relation associated to $A$ and/or $x=\Psi'(A)$ to be $\nabla^{1/2} \im(x)$, but then $A^*_{KMS}$ and/or $\tau(x)^{*\op}$ would not be associated to $V^*$: only the choice made in Proposition~\ref{prop:image_links} gives this.
\end{remark}

We can now quickly show ``functoriality'', see Proposition~\ref{prop:functor_A_to_V}.

\begin{proposition}\label{prop:composition_of_adj_ops}
Let $M_1, M_2, M_3$ be finite-dimensional von Neumann algebras, and let $V_1 \subseteq \mc B(L^2(M_1), L^2(M_2))$ and $V_2 \subseteq \mc B(L^2(M_2), L^2(M_3))$ be quantum relations, with associated adjacency operators $A_1, A_2$.  Set $V = V_2 \circ V_1 \subseteq \mc B(L^2(M_1), L^2(M_3))$, and let $x = \Psi'(A_2\circ A_1)$ a positive operator in $M_3 \otimes M_1^\op$.  Then $V$ is the quantum relation associated to $x$ by Proposition~\ref{prop:image_links}.
\end{proposition}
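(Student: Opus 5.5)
The plan is to route everything through Proposition~\ref{prop:A_vs_theta} and the functoriality of $\theta\mapsto V^\theta$ from Proposition~\ref{prop:cp_to_qr_composition}, rather than computing $\im\Psi'(A_2\circ A_1)$ directly. First, since $A_1,A_2$ are CP (Theorem~\ref{thm:Psi'_CP_idem_to_proj}), so is $A_2\circ A_1$. Hence $x$ is positive and Proposition~\ref{prop:image_links} associates to it some quantum relation; call it $W$. We must show $W=V_2\circ V_1$. Set $\theta_k = (A_k)^*_{KMS}$ for $k=1,2$, a CP map $M_{k+1}\to M_k$. The KMS adjoint reverses composition: $J A^* J$ is anti-multiplicative, because $(A_2A_1)^* = A_1^*A_2^*$ and the middle $J$'s cancel ($J_{M_2}^2=1$). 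Therefore $(A_2\circ A_1)^*_{KMS} = \theta_1\circ\theta_2 \colon M_3\to M_1$.

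Proposition~\ref{prop:A_vs_theta} then gives three identities:
\[ V^{\theta_1} = \nabla_2^{1/4} V_1 \nabla_1^{-1/4}, \quad V^{\theta_2} = \nabla_3^{1/4} V_2\nabla_2^{-1/4}, \quad V^{\theta_1\circ\theta_2} = \nabla_3^{1/4} W \nabla_1^{-1/4}. \]
By Proposition~\ref{prop:cp_to_qr_composition} (with $\theta_2$ applied first, then $\theta_1$), $V^{\theta_1\circ\theta_2} = V^{\theta_2}\circ V^{\theta_1}$. Substituting, the inner factors $\nabla_2^{-1/4}\nabla_2^{1/4}$ cancel, so
\[ \nabla_3^{1/4} W\nabla_1^{-1/4} = \nabla_3^{1/4}(V_2\circ V_1)\nabla_1^{-1/4}. \]
Cancelling the invertible outer factors gives $W = V_2\circ V_1$. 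Everything is finite-dimensional, so the weak$^*$-closures in the definition of composition are automatic and these manipulations are just identities of linear spans.

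One point needs care. In Proposition~\ref{prop:A_vs_theta} the relation $V^\theta$ is computed with $M,N$ acting on their GNS spaces $L^2(M), L^2(N)$, while $V^\theta$ itself was defined for arbitrary representations. Proposition~\ref{prop:cp_to_qr_composition} holds for any fixed concrete representations, so we use the GNS representations throughout. With that choice the three applications are consistent.

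I expect the main obstacle to be bookkeeping of directions and conventions rather than any analysis. Three things need checking: that $A^*_{KMS}$ is genuinely contravariant with the right (co)domains; that Proposition~\ref{prop:cp_to_qr_composition}'s order convention $V^{\theta_2\circ\theta_1}=V^{\theta_1}\circ V^{\theta_2}$ matches the order here; and that Proposition~\ref{prop:A_vs_theta} does not require $A$ to be Schur idempotent. It does not: its proof only uses positivity of $\Psi'(A)$ via a factorisation $\Psi'(A)=y^*y$, which is exactly why the earlier non-idempotent generalisation was developed. If that last check failed, I would instead compute $\im\Psi'(A_2\circ A_1)$ directly, writing each $\Psi'(A_k)$ as $y_k^*y_k$ and tracking images through Proposition~\ref{prop:image_links}\ref{prop:image_links:3}.
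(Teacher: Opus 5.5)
Your proposal is correct and follows essentially the same route as the paper's proof: set $\theta_i=(A_i)^*_{KMS}$, apply Proposition~\ref{prop:A_vs_theta} to $A_1$, $A_2$ and $A_2\circ A_1$, use $(A_2\circ A_1)^*_{KMS}=\theta_1\circ\theta_2$ together with Proposition~\ref{prop:cp_to_qr_composition}, and cancel the modular operators. Your additional checks are exactly the points the paper uses without comment: that the KMS adjoint reverses composition (via $J_{M_2}^2=1$), and that Proposition~\ref{prop:A_vs_theta} requires only complete positivity of $A$, not Schur idempotency.
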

\begin{proof}
For $i=1,2$ set $\theta_i = (A_i)^*_{KMS}$ so Proposition~\ref{prop:A_vs_theta} shows that $V^{\theta_i} = \nabla^{1/4} V_i \nabla^{-1/4}$.  Let $A = A_2 \circ A_1$ so $A^*_{KMS} = \theta_1 \circ \theta_2$ and let $U$ be the quantum relation associated to $A$, so $V^{\theta_1\circ\theta_2} = V^{A^*_{KMS}} = \nabla^{1/4} U \nabla^{-1/4}$.  By Proposition~\ref{prop:cp_to_qr_composition} we have that $V^{\theta_1\circ\theta_2} = V^{\theta_2} \circ V^{\theta_1} = \nabla^{1/4} V_2 \nabla^{-1/4} \nabla^{1/4} V_1 \nabla^{-1/4} = \nabla^{1/4} V \nabla^{-1/4}$ and so we conclude that $U=V$ as claimed.
\end{proof}

\begin{remark}
The contention of Section~\ref{sec:quantum_functions}, in particular Proposition~\ref{prop:star_is_inverse}, is that $V\mapsto V^*$ is a sort of (partial) inverse operation; but here it seems related to the (KMS) adjoint on CP maps.  However, we need to be a little careful.  The functor from Proposition~\ref{prop:functor_A_to_V} is of course not an equivalence, and so just because two CP maps give the same relation, the maps might be different.  Compare Section~\ref{sec:adj_ops_from_hms} below, where we compute the Schur-idempotent $A$ associated to $V^\theta$, for a $*$-homomorphism $\theta$.
\end{remark}

Next we show that all quantum relations arise from a CP map; the following proof is very much in the vein of Verdon's approach.

\begin{corollary}\label{corr:all_fd_qrs_from_CP}
Let $M,N$ be finite-dimensional, and let $V$ be a quantum relation from $M$ to $N$.  There is a CP map $\theta \colon N \to M$ with $V = V^\theta$.
\end{corollary}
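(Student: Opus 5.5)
By Corollary~\ref{corr:indep_rep} and the proposition showing $V^\theta$ is independent of the representing spaces, we may represent $M$ on $L^2(M)=L^2(\varphi_M)$ and $N$ on $L^2(N)$ for chosen faithful positive functionals $\varphi_M,\varphi_N$. We then reduce to Proposition~\ref{prop:A_vs_theta}. Given $V$, we set $W = \nabla_N^{-1/4} V \nabla_M^{1/4}$. This is again a quantum relation from $M$ to $N$: conjugating by the modular operators preserves the bimodule property, since $\nabla_N^{it}N'\nabla_N^{-it}=N'$ and, as everything is finite-dimensional, analytic continuation gives $\nabla_N^{s} N' \nabla_N^{-s} = N'$ for real $s$. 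Explicitly, $\nabla^{s} J a J \nabla^{-s} = J\sigma_{is}(a)J$ up to sign conventions, which lies in $N'$; similarly on the $M$ side.

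Next, as in the proof of Proposition~\ref{prop:functor_A_to_V} (fullness), we let $e$ be the projection onto $\nabla_N^{-1/4} W \nabla_M^{-1/4}$, viewed inside $L^2(N)\otimes\overline{L^2(M)}$. This subspace is $N'\otimes M'^\op$-invariant, by the same invariance just used, so $e\in N\otimes M^\op$. Put $A=\Psi'^{-1}(e)$. By Theorem~\ref{thm:Psi'_CP_idem_to_proj}, $A$ is CP (indeed Schur idempotent), and by Proposition~\ref{prop:image_links} its associated quantum relation is $\nabla^{1/4}\im(e)\nabla^{1/4} = W$.

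Finally, take $\theta = A^*_{KMS}\colon N\to M$, which is CP by Proposition~\ref{prop:image_links}\ref{prop:image_links:4}. Proposition~\ref{prop:A_vs_theta} then gives
\[ V^\theta = \nabla_N^{1/4} W \nabla_M^{-1/4} = V. \]

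The only step needing genuine care is the invariance claim, namely that conjugation by powers of $\nabla$ (or equivalently of $Q$, by Remark~\ref{rem:nabla_or_Q}) maps $N'$-$M'$-bimodules to bimodules. I would verify this by noting that $\nabla_N^{s}$ implements an automorphism of $N'$: on $L^2(N)$ we have $\nabla^s = Q_N^{s}\, J Q_N^{-s} J$ (up to ordering), and the factor $Q_N^s\in N$ commutes with $N'$, while $JQ^{-s}_NJ\in N'$. So $\nabla^s N'\nabla^{-s} = (JQ_N^{-s}J) N' (JQ_N^{s}J)=N'$. With this in hand, everything else is a direct assembly of the cited results.
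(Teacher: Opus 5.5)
Your argument is correct and follows the same route as the paper: realise $V$ on the GNS spaces, take $A=\Psi'^{-1}(e)$ for a projection $e$ as in the fullness part of Proposition~\ref{prop:functor_A_to_V}, set $\theta=A^*_{KMS}$, and apply Proposition~\ref{prop:A_vs_theta}. The only difference is that the paper takes $\varphi_M=\Tr_M$ and $\varphi_N=\Tr_N$. Then $\nabla_M=\nabla_N=1$ and $V^\theta=V$ directly, so your pre-twist $W=\nabla_N^{-1/4}V\nabla_M^{1/4}$ and the accompanying invariance check are unnecessary, though your version, valid for arbitrary faithful $\varphi_M,\varphi_N$, is also fine.
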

\begin{proof}
Let $\varphi_M = \Tr_M$, and let $M$ act on $L^2(M)$, with the same choices for $N$.  We realise $V$ as a subspace of $\mc B(L^2(M), L^2(N))$, and so by Proposition~\ref{prop:image_links} applied to, for example, the projection onto $V$, we obtain an CP $A\colon M \to N$.  Set $\theta = A^*_{KMS} = A^*$, and apply the preceding result.
\end{proof}

As we saw in Remark~\ref{rem:not_all_cosurj}, not all cosurjective $V$ arise from UCP $\theta$.  Related is the characterisation given by \cite[Proposition~3.6]{Verdon_CovQuantumCombs}: here ``channels'' are trace-preserving (TP) CP maps, not UCP maps, but see the following lemma.
Unfortunately, we don't follow the argument given by \cite[Proposition~3.6]{Verdon_CovQuantumCombs}; we make some comments.

\begin{lemma}\label{lem:tcpc_ucp}
Let $A \colon M \to N$ be CP.  Then $A$ is satisfies $\varphi_N\circ A = \varphi_M$ if and only if $A^*$ is UCP, if and only if $(\varphi_N\otimes\id)\Psi'(A) = 1^\op$.
\end{lemma}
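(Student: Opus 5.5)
Both equivalences reduce to pairing the GNS inner products with the identity $1$, using $\Lambda(1)$ as a test vector. Here $A^*$ is the Hilbert space adjoint of $A$, viewed as an operator $L^2(M)\to L^2(N)$. We identify $A^*$ with a linear map $N\to M$ via $\Lambda$, so that $\varphi_M(b^*A^*(a)) = (\Lambda(b)|A^*\Lambda(a)) = (A\Lambda(b)|\Lambda(a)) = \varphi_N(A(b)^*a)$ for $a\in N, b\in M$.

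\emph{First equivalence.} Put $a=1$. Then $\varphi_M(b^*A^*(1)) = \varphi_N(A(b)^*)$ for all $b\in M$. Since $A$ is CP it is real, so $\varphi_N(A(b)^*) = \overline{\varphi_N(A(b))}$. Hence $\varphi_N\circ A=\varphi_M$ holds if and only if $\varphi_M(b^*A^*(1)) = \varphi_M(b^*)$ for all $b$. By faithfulness of $\varphi_M$ (nondegeneracy of the GNS pairing, as $\Lambda$ is a bijection), this is equivalent to $A^*(1)=1$. It remains to note that $A^*$ is CP whenever $A$ is. Here I would use that $A^*_{KMS}=J_MA^*J_N$ is CP by Proposition~\ref{prop:image_links}\ref{prop:image_links:4}, and that $J_MA^*J_N$ differs from $A^*$ by composition with the $*$-anti-automorphisms $x\mapsto \sigma_{-i/2}(x^*)$ on either side. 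Equivalently, $A^* = \nabla^{1/2}A^*_{KMS}\nabla^{-1/2}$, i.e.\ $A^* = \sigma_{-i/2}\circ A^*_{KMS}\circ\sigma_{i/2}$ as maps. This is not obviously CP in general. So the cleanest route is to read the statement as ``$A^*$ is unital'', noting that when it matters (e.g.\ Markov traces) $\nabla=1$ and $A^*=A^*_{KMS}$ is CP. I would check whether $A^*(1)=1$ is equivalent to $A^*_{KMS}(1)=1$: indeed $\sigma_{\pm i/2}(1)=1$, so unitality transfers regardless, and this is what the proof really needs.

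\emph{Second equivalence.} I would use Lemma~\ref{lem:action_A_to_Psi'A} in a dual form, or compute directly. Write $\Psi'(A)=\sum_j x_j\otimes\sigma_{i/2}(y_j)^{*\op}$ with $A(b)=\sum_j\varphi_M(y_j^*b)x_j$. Then $\varphi_N\circ A = \sum_j \varphi_N(x_j)\varphi_M(y_j^*\,\cdot\,)$, so $\varphi_N\circ A=\varphi_M$ if and only if $\sum_j \varphi_N(x_j) y_j^* = 1$, again by faithfulness. Meanwhile $(\varphi_N\otimes\id)\Psi'(A) = \sum_j\varphi_N(x_j)\,\sigma_{i/2}(y_j)^{*\op} = \big(\sigma_{-i/2}(\sum_j \overline{\varphi_N(x_j)} y_j)^*\big)^{\op}$, after adjusting conjugates using $\sigma_{i/2}(y)^* = \sigma_{-i/2}(y^*)$. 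This equals $\sigma_{-i/2}(\sum_j\varphi_N(x_j)y_j^*)^{\op}$. Since $\sigma_{-i/2}$ is a bijective unital map, this is $1^\op$ exactly when $\sum_j\varphi_N(x_j)y_j^*=1$.

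The main obstacle is bookkeeping: getting the conjugations and the $\sigma_{\pm i/2}$ signs right, and clarifying the meaning of ``$A^*$ is UCP'' in the non-tracial case. For the latter I would handle unitality as above, and obtain complete positivity via the relation to $A^*_{KMS}$.
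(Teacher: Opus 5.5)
Your approach is the paper's. The first equivalence tests the adjoint relation against the unit and uses faithfulness of $\varphi_M$. The second writes $A=\sum_j\rankone{\Lambda(x_j)}{\Lambda(y_j)}$ and computes $(\varphi_N\otimes\id)\Psi'(A)=\sigma_{-i/2}\big(\sum_j\varphi_N(x_j)y_j^*\big)^\op$, then uses that $\sigma_{-i/2}$ is a unital bijection.

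Two small slips:
\begin{itemize}
\item The identity $\varphi_N(A(b)^*)=\overline{\varphi_N(A(b))}$ holds because $\varphi_N$ is positive, hence hermitian. It does not follow from $A$ being real. The paper avoids this by putting the unit on the left: $\varphi_N(A(b))=\varphi_M(A^*(1)^*b)$.
\item Your intermediate expression should be $\sigma_{i/2}\big(\sum_j\overline{\varphi_N(x_j)}y_j\big)^*$, not $\sigma_{-i/2}(\cdots)^*$. Your final formula is nevertheless correct, and either sign would give the conclusion.
\end{itemize}

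The substantive point is complete positivity. Your suspicion is right, and your closing plan (``obtain complete positivity via the relation to $A^*_{KMS}$'') cannot be carried out. The Hilbert space adjoint $A^*$ is genuinely not CP in general, even when $\varphi_N\circ A=\varphi_M$. Here is a counterexample:
\begin{itemize}
\item Take $M=N=\mathbb M_2$ with $\varphi=\Tr_M(Q\,\cdot)$, where $Q$ is diagonal with distinct entries.
\item Let $A(b)=\varphi(b)\rho$ with $\rho\geq 0$, $\varphi(\rho)=1$ and $Q\rho\neq\rho Q$.
\item Then $A$ is CP and $\varphi\circ A=\varphi$.
\item The relation $\varphi(a^*A(b))=\varphi(A^*(a)^*b)$ forces $A^*(a)=\varphi(\rho a)1=\Tr_M(Q\rho a)1$.
\item This map is unital, but since $Q\rho$ is not self-adjoint it takes non-real values on some positive $a$. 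So $A^*$ is not even positive.
\end{itemize}
Hence conjugating $A^*_{KMS}$ by $\sigma_{\mp i/2}$ does not preserve complete positivity.

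The statement is only correct in the reading you settled on: $A^*(1)=1$, or equivalently $A^*_{KMS}$ is UCP. Indeed $A^*_{KMS}$ is CP by Proposition~\ref{prop:image_links}\ref{prop:image_links:4}, and $A^*_{KMS}(1)=\sigma_{i/2}(A^*(1))$. In the tracial case $A^*=A^*_{KMS}$ is itself UCP. The paper's own proof also only shows $A^*(1)=1\iff\varphi_N\circ A=\varphi_M$ and is silent on complete positivity. So drop the attempt to prove $A^*$ is CP and state the result for $A^*_{KMS}$ (or for unitality of $A^*$).
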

\begin{proof}
The relation between $A$ and $A^*$ is that $\varphi_N(a^*A(b)) = \varphi_M(A^*(a)^*b)$ for each $a\in N, b\in M$.  As $\varphi_M$ is faithful, it follows that $A^*(1)=1$ if and only if $\varphi_M(b) = \varphi_N(A(b))$ for each $b$, that is $\varphi_M = \varphi_N\circ A$, i.e.\@ $A$ preserves the functionals.  As in Proposition~\ref{prop:image_links}, let $A = \sum_j \rankone{\Lambda(x_j)}{\Lambda(y_j)}$ so $\Psi'(A) = \sum_j x_j \otimes \sigma_{i/2}(y_j)^{*\op}$ and hence $(\varphi_N\otimes\id)\Psi'(A) = \sum_j \varphi_N(x_j) \sigma_{-i/2}(y_j^*)^{\op}$, while $\varphi_N(A(b)) = \sum_j \varphi_N(x_j) \varphi_M(y_j^*b)$ for $b\in M$.  So $\varphi_N\circ A = \varphi_M$ if and only if $\sum_j \varphi_N(x_j) y_j^* = 1$ if and only if $(\varphi_N\otimes\id)\Psi'(A) = 1^\op$ as $\sigma_{i/2}$ is a unital homomorphism.
\end{proof}

\begin{remark}
In our language, \cite[Proposition~3.6]{Verdon_CovQuantumCombs} claims that, with $e \in N \otimes M^\op$, we can choose an adjacency operator $A$ with $x=\Psi'(A)$ having support projection $e$ and $(\varphi_N\otimes\id)\Psi'(A) = 1^\op$, if and only if $(\varphi_N\otimes\id)(e)$ is invertible.  We show ``only if'': let $u = (\varphi_N\otimes\id)(e)$, so by finite-dimensionality, it suffices to show that $u$ is injective.  If $u\xi=0$ then $0 = (\xi|u\xi) = (\Lambda_N(1)\otimes\xi|e(\Lambda_N(1)\otimes\xi))$ so $\Lambda_N(1)\otimes\xi \in \ker(e) = \im(e)^\perp = \im(x)^\perp = \ker(x)$, so $0 = (\Lambda_N(1)\otimes\xi|x(\Lambda_N(1)\otimes\xi)) = (\xi|(\varphi_N\otimes\id)(x)\xi) = \|\xi\|^2$.

However, the ``if'' case does not hold.  Let $M=N=\mathbb M_2$, and let $N\otimes M^\op \cong \mathbb M_4$ act on $\mathbb C^4$.  Suppose $e = \rankone{\xi}{\xi}$ for some unit vector $\xi=(\xi_i)\in\mathbb C^4$, so $x$, being positive with the same image as $e$, must be a positive multiple of $e$.  Set $\xi_i = 1/2$ for $i\leq 3$, and $\xi_4 = i/2$.  Then the matrix entries are $e_{i,j} = \xi_i \overline{\xi_j} = 1/4$ excepting $e_{4,j} = i/4, e_{i,4} = -i/4$ for $i,j\leq 3$.  So
\[ u = (\Tr\otimes\id)(e) = \begin{pmatrix}
    1/2 & (1-i)/4 \\ (1+i)/4 & 1/2 \end{pmatrix}. \]
This has non-zero determinant, so is invertible, but as $x$ is a positive multiple of $e$, it is not possible that $(\Tr\otimes\id)(x)=1$.

It remains an interesting question if one can characterise when we can choose $x$ with $(\varphi_N\otimes\id)(x)=1^\op$ in terms of $e$.  Furthermore, the philosophy of this paper is to look at the quantum relation $V$, which here would correspond to giving a condition which used the image of $e$, as a subspace of $\mc B(L^2(M), L^2(N))$, directly.
\end{remark}

A quantum graph in the sense of Weaver \cite{Weaver_QuantumGraphs} is a quantum relation $V$ from $M$ to $M$ which is \emph{symmetric}, $V^*=V$, and \emph{reflexive}, $1\in V$ (equivalently, $M'\subseteq V$).  Verdon, \cite[Definition~3.9]{Verdon_CovQuantumCombs}, considers just the symmetric condition, while if we also have $1\in V$ then $V$ is said to be a ``quantum confusability graph''.  The graph defined by a CP map is $V^{\theta *} \circ V^\theta$, \cite[Proposition~3.11]{Verdon_CovQuantumCombs}, compared Remark~\ref{rem:graph_from_qr}.  The following is adapted from our understanding of how Verdon's proof of \cite[Proposition~3.12]{Verdon_CovQuantumCombs} works.  As ever, we consider UCP maps, not ``channels'' (TPCP maps).  We split the argument into two parts.

\begin{lemma}\label{lem:A_pos_can_square-root}
Let $M$ be a finite-dimensional von Neumann algebra, set $\varphi_M = \Tr_M$ the Markov Trace, and let $A\in\mc B(L^2(M))$ be a positive operator, such that the induced map $M\to M$ is CP.  Letting $S$ be the quantum relation associated to $A$, there is a Hilbert space $L$ and a CP map $\theta \colon \mc B(L) \to M$ with $V^{\theta *} \circ V^\theta = S$.
\end{lemma}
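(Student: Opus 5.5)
The plan is to factor $A$ through a Hilbert space and read off a relation from $M$ to a full matrix algebra. Since $\varphi_M=\Tr_M$ is a trace, $\nabla_M=1$, so by Proposition~\ref{prop:image_links}\ref{prop:image_links:3} the relation associated to $A$ is
\[ S = \lin\{ a' A b' : a',b'\in M'\}. \]
As $A\geq 0$ in $\mc B(L^2(M))$, I will write $A=B^*B$, for instance with $B=A^{1/2}$. I take $L=L^2(M)$ and regard $B\colon L^2(M)\to L$. The algebra $\mc B(L)$ acts on $L$ with trivial commutant $\mathbb C1$. So a quantum relation from $M$ to $\mc B(L)$ is just a subspace of $\mc B(L^2(M),L)$ which is a right $M'$-module; everything is finite-dimensional, so weak$^*$-closures are automatic.

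I set $V=\lin\{Bb' : b'\in M'\}$. This is a right $M'$-module because $M'$ is an algebra, so it is a quantum relation from $M$ to $\mc B(L)$. Its composition with its adjoint is
\[ V^*\circ V=\lin\{ c'^*B^*Bb' : b',c'\in M'\} = M'AM' = S, \]
using that $M'$ is self-adjoint. It therefore remains to produce a CP map $\theta\colon\mc B(L)\to M$ with $V^\theta=V$.

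For this I will use Kraus operators together with Proposition~\ref{prop:kraus_to_V}. Write $M'\cong\bigoplus_\beta \mathbb M_{m(\beta)}$ with matrix units $(f^\beta_{ij})$, and define
\[ \theta(y)=\sum_{\beta,i,j} (Bf^\beta_{ij})^*\, y\, (Bf^\beta_{ij}) = \sum_{\beta,i,j} f^\beta_{ji}\,B^*yB\,f^\beta_{ij} \qquad (y\in\mc B(L)). \]
This map is clearly normal and CP into $\mc B(L^2(M))$. The main point to check is that it lands in $M$. The map $T\mapsto\sum_{\beta,i,j}f^\beta_{ji}Tf^\beta_{ij}$ has image in $(M')'=M$: a direct computation with matrix units shows its output commutes with every $f^\beta_{kl}$, hence with all of $M'$. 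This is the standard averaging over matrix units. Hence $\theta\colon\mc B(L)\to M$ is a normal CP map with Kraus operators $b_{\beta ij}=Bf^\beta_{ij}$. Proposition~\ref{prop:kraus_to_V}, with $\mc B(L)'=\mathbb C$, then gives $V^\theta=\lin\{Bf^\beta_{ij}\}$, and this equals $V$ because the matrix units span $M'$.

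I expect the only delicate step to be this ``lands in $M$'' verification, i.e.\ the choice of Kraus family. A naive basis of $V$ as Kraus operators need not give a map into $M$, so the Kraus family has to be built from a basis of $M'$ with the averaging property. As a fallback, one could instead invoke Corollary~\ref{corr:all_fd_qrs_from_CP} for $V$. That route would need care about representations, since $\mc B(L)$ is represented on $L$ rather than on $L^2(\mc B(L))$, and this is why I prefer the explicit construction above.
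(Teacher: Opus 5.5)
Your proof is correct, and it follows a genuinely different, more direct route than the paper's.

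The paper works in the adjacency-operator calculus. It writes $A=\sum_i\rankone{\Lambda(a_i)}{\Lambda(a_i)}$ and ``rotates'' the dilation $\xi\mapsto\sum_i a_i^*\xi\otimes\delta_i$ to obtain a CP map $T\colon M\to\mc B(\overline{\mathbb C^n})$. It then computes $T^*$ explicitly and checks that $\Psi'(T^*\circ T)=x^*x$ with $x=\Psi'(A)$. Hence $T^*\circ T$ has relation $\im(x^*x)=\im(x)=S$; this step needs $x\geq 0$, i.e.\ that $A$ is CP. Finally it sets $\theta=T^*$ and invokes Proposition~\ref{prop:A_vs_theta} and the functoriality of Proposition~\ref{prop:functor_A_to_V}.

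You instead stay at the level of relations throughout. Proposition~\ref{prop:image_links}\ref{prop:image_links:3} with $\nabla_M=1$ gives $S=M'AM'$. The factorisation $A=B^*B$ then makes $S=V^*\circ V$ immediate for $V=BM'$. The only real work is realising $V$ as some $V^\theta$, which you do by matrix-unit averaging together with Proposition~\ref{prop:kraus_to_V}. The averaging step is correct: writing $E(Z)=\sum_{\beta,i,j}f^\beta_{ji}Zf^\beta_{ij}$, both $f^\beta_{kl}E(Z)$ and $E(Z)f^\beta_{kl}$ equal $\sum_i f^\beta_{ki}Zf^\beta_{il}$.

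What your route buys:
\begin{itemize}
\item It is shorter, and bypasses both the computation of $T^*$ and the machinery of Propositions~\ref{prop:A_vs_theta} and~\ref{prop:composition_of_adj_ops}.
\item It never uses that $A$ is CP, so it proves the lemma for every positive $A\in\mc B(L^2(M))$.
\item It is close in spirit to the conditional-expectation argument the paper attributes to Kornell after Theorem~\ref{thm:QG_from_CP}. Your averaging map is, up to a central invertible factor, a conditional expectation $\mc B(L^2(M))\to M$.
\end{itemize}

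The paper's route is chosen to mirror Verdon's proof. It exhibits $S$ as the relation of a composite $T^*\circ T$ whose Choi matrix is $x^*x$, i.e.\ a ``square root'' of $x$ at the level of CP maps. That is the structure the paper wants to make visible.
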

\begin{proof}
Let $x = \Psi'(A) \in M \vnten M^\op$ a positive element.  As $A$ is positive, there is a finite sequence $(a_i)_{i=1}^n$ in $M$ with $A = \sum_i \rankone{\Lambda(a_i)}{\Lambda(a_i)}$, so that $x = \sum_i a_i \otimes a_i^{*\op}$.  Define
\[ u\colon L^2(M) \to L^2(M) \otimes \mathbb C^n; \quad \xi \mapsto \sum_i a_i^*(\xi) \otimes \delta_i. \]
We ``rotate'' $u$ to find $v\colon \overline{\mathbb C^n} \to \overline{L^2(M)} \otimes L^2(M)$.  This means that if $u = \sum_j \rankone{\xi_j\otimes \alpha_j}{\eta_j}$ then $v = \sum_j \rankone{\overline{\eta_j}\otimes\xi_j}{\overline{\alpha_j}}$.  Choose $(e_k)$ in $M$ such that $(\Lambda(e_k))$ is an orthonormal basis of $L^2(M)$.  Then
\[ u(\xi) = \sum_{l,k} (\Lambda(e_k)|a_l^*(\xi)) \Lambda(e_k) \otimes \delta_l
\quad\implies\quad
u = \sum_{l,k} \rankone{\Lambda(e_k)\otimes\delta_l}{\Lambda(a_l e_k)}, \]
and so
\[ v(\overline{\delta_j}) = \sum_{l,k} \overline{\Lambda(a_l e_k)} \otimes \Lambda(e_k) ( \overline{\delta_l} | \overline{\delta_j} ) 
=\sum_{k} \overline{\Lambda(a_j e_k)} \otimes \Lambda(e_k). \]
The map $T\colon M \to \mc B(\overline{\mathbb C^n}); x \mapsto v^*(1\otimes x)v$ is CP, and satisfies
\begin{align*}
\big( \overline{\delta_i} \big| T(x) \overline{\delta_j} \big)
&= \sum_{k,l} \big( \overline{\Lambda(a_ie_k)} \otimes \Lambda(e_k) \big| \overline{\Lambda(a_je_l)} \otimes x\Lambda(e_l) \big)   \\
&= \sum_{k,l} (\Lambda(a_i^*a_je_l)|\Lambda(e_k)) (\Lambda(e_k)|x\Lambda(e_l))
= \sum_{l} ( \Lambda(a_i^*a_je_l) | x\Lambda(e_l) ) \\
&= \Tr(a_j^*a_ix) = \varphi(a_j^*a_ix),
\end{align*}
using that $\varphi = \Tr_M$ the Markov Trace on $M$.  We compute the adjoint $T^*$:
\begin{align*}
\varphi(x^* T^*(\rankone{\overline{\delta_i}}{\overline{\delta_j}}))
&= (\Lambda(x)|T^*(\rankone{\overline{\delta_i}}{\overline{\delta_j}}))_{L^2(M)}
= (T(x) | \rankone{\overline{\delta_i}}{\overline{\delta_j}} )_{HS}
= \Tr(T(x)^* \rankone{\overline{\delta_i}}{\overline{\delta_j}} ) \\
&= ( \overline{\delta_j} | T(x)^* \overline{\delta_i} )
= \overline{ \varphi(a_j^*a_ix) }
= \varphi(x^* a_i^* a_j).
\end{align*}
It follows that
\begin{align*}
(T^*\circ T)(x) = \sum_{i,j} \varphi(a_j^*a_ix) T^*( \rankone{\overline{\delta_i}}{\overline{\delta_j}} )
= \sum_{i,j} \varphi(a_j^*a_ix) a_i^* a_j
= \sum_{i,j} \rankone{ \Lambda(a_i^*a_j) }{ \Lambda(a_i^*a_j) } x.
\end{align*}
So $\Psi'(T^*\circ T) = \sum_{i,j} a_i^*a_j \otimes (a_i^*a_j)^{*\op} = x^*x$, and hence $\im \Psi'(T^*\circ T) = \im (x^*x) = \im(x)$ as $x$ is positive, and so $T^*\circ T$ gives the quantum relation $S$ we started with.  Set $\theta = T^*$ so Proposition~\ref{prop:A_vs_theta} shows that $V^\theta$ is the quantum relation induced by $T$, and by Proposition~\ref{prop:functor_A_to_V} we conclude that $T^*\circ T$ is associated to the quantum relation $V^{\theta *} \circ V^\theta$, that is, $S = V^{\theta *} \circ V^\theta$.
\end{proof}

\begin{proposition}[{\cite[Proposition~3.12]{Verdon_CovQuantumCombs}}]\label{prop:Verdon_All_QG_from_UCP}
Let $M$ be a finite-dimensional von Neumann algebra, and let $S$ be a quantum graph over $M$ (so $S^*=S$ and $1\in S$).  There is a UCP map $\theta\colon \mc B(L) \to M$, for some finite-dimensional Hilbert space $L$, with $V^{\theta *} \circ V^\theta = S$.  Hence every such $S$ is the quantum confusability graph of some UCP map to $M$.
\end{proposition}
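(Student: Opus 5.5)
The strategy is to reduce to Lemma~\ref{lem:A_pos_can_square-root} by finding a suitable positive CP operator $A$ whose associated relation is $S$, and then to normalise the resulting CP map so that it becomes unital. Throughout we take $\varphi_M=\Tr_M$ and let $M$ act on $L^2(M)$. In this setting Proposition~\ref{prop:image_links} associates $S$ with the image of a projection, with no modular twist.

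First we find a positive $x\in M\otimes M^\op$ with $\im(x)=S$ such that $A=\Psi'^{-1}(x)$ is a positive operator on $L^2(M)$. Since $S^*=S$, Proposition~\ref{prop:image_links}\ref{prop:image_links:4} shows that the projection $e$ onto $S$ satisfies $\tau(e)^{\op *}=e$. The corresponding $A=\Psi'^{-1}(e)$ is therefore KMS self-adjoint, and in the tracial case this means self-adjoint. It need not be positive, however. To fix this, write $S$ as the span of $b_k^*b_l$ over a spanning family $(b_k)$ of a suitable $V$, or more directly choose a basis $(s_k)$ of $S$ closed under $*$. We then take $x=\sum_k s_k\otimes s_k^{*\op}$, which is positive and has image $S$ (the image is the span of the $s_k$ viewed as Hilbert--Schmidt operators). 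The operator $A=\sum_k \rankone{\Lambda(s_k)}{\Lambda(s_k)}$ is positive and CP, since $\Psi'(A)=x\ge0$. Lemma~\ref{lem:A_pos_can_square-root} now gives $T\colon M\to\mc B(\overline{\mathbb C^n})$ with $\theta_0=T^*$ CP and $V^{\theta_0*}\circ V^{\theta_0}=S$. Concretely, $\theta_0(\rankone{\overline{\delta_i}}{\overline{\delta_j}})=a_i^*a_j$ with $a_i=s_i$.

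The main obstacle is unitality. We have $\theta_0(1)=\sum_i a_i^*a_i$, which is positive but need not equal $1$. Here we use $1\in S$. Because $M'\subseteq S$ and $1\in S$, we can choose the spanning family so that $a_1=1$, which makes $h=\sum_i a_i^*a_i\ge 1$ and hence invertible. We then set $\theta(y)=h^{-1/2}\theta_0(y)h^{-1/2}$, which is UCP. Its Kraus operators are $b_ih^{-1/2}$ where $b_i$ are the Kraus operators of $\theta_0$, so Proposition~\ref{prop:kraus_to_V} gives $V^\theta=V^{\theta_0}h^{-1/2}$. Since $h\in M$, the inverse square root $h^{-1/2}$ commutes with $M'$. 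It follows that
\[ V^{\theta*}\circ V^\theta=h^{-1/2}\,S\,h^{-1/2}. \]

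It remains to check that this conjugation preserves $S$ itself, and this is the delicate point. Since $S$ is an $M'$-bimodule and $h\in M$, conjugating by $h^{-1/2}$ need not preserve $S$ in general. So instead of normalising afterwards, we would normalise at the level of the Kraus family. We choose elements $a_i\in M$, spanning appropriately, with $\sum_i a_i^*a_i=1$ and $\lin\{a_i^*a_j\}$ generating $S$ under the $M'$-action. The reflexive, symmetric structure should allow this, by the argument of \cite[Lemma~2]{duan2009superactivationzeroerrorcapacitynoisy} adapted to $S$: write $1=\sum_i a_i^*a_i$ with $a_1=\epsilon\cdot 1$ small, adjust the remaining terms, and use positivity inside the operator system. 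Once such a family is found, the computation in Lemma~\ref{lem:A_pos_can_square-root} gives $\theta(1)=\sum a_i^*a_i=1$ directly. We expect this choice of a unital Kraus family, compatible with $x$ having image $S$, to be the main obstacle.
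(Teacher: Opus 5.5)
There is a genuine gap: neither of your two essential steps is actually carried out.

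\textbf{The construction of $x$ fails.} The elements of $S$ are operators on $L^2(M)$, and typically they do not lie in $M$. Already $M'\subseteq S$, and for $M=\mathbb M_n$ one has $M\cap M'=\mathbb C1$. So neither $\Lambda(s_k)$ nor $\sum_k s_k\otimes s_k^{*\op}\in M\otimes M^\op$ makes sense.

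Even for $s_k\in M$ such an element need not be positive. For $M=\ell^\infty(\{1,2\})$ and $s=\delta_1-\delta_2$, the element $s\otimes s^{*\op}$ is the function $(i,j)\mapsto s(i)s(j)$, which takes the value $-1$. Moreover, its image on $HS(L^2(M))$ is $\{\sum_k s_kRs_k^*\}$, not $\lin\{s_k\}$. You are conflating the range of $A$ on $L^2(M)$ with the range of $\Psi'(A)$ on $HS(L^2(M))$, and it is the latter that determines the relation.

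\textbf{Unitality is left open.} As you note, conjugating by $h^{-1/2}$ replaces $S$ by $h^{-1/2}Sh^{-1/2}$. The Duan-type repair is only sketched, and it is also misformulated. For $a_i\in M$, the $M'$-bimodule generated by $\{a_i^*a_j\}$ lies in $\lin MM'$. For $M=\ell^\infty(X)$ acting on $\ell^2(X)$ this consists of diagonal operators, so only the trivial graph could arise. In Lemma~\ref{lem:A_pos_can_square-root} the relation is $\im\Psi'(A)$, not a bimodule generated by the $a_i^*a_j$.

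\textbf{The missing idea} is to use $1\in S$ at the level of projections, which is what the paper does.
\begin{itemize}
\item Since $M'\subseteq S$, the projection $e$ onto $S$ dominates the projection $e_0$ onto $M'$, and for the Markov trace $\Psi'^{-1}(e_0)=1_{L^2(M)}$.
\item Set $e_1=e-e_0$. Then $A_1=\Psi'^{-1}(e_1)$ is self-adjoint because $S=S^*$; this is your own observation, applied to $e_1$ rather than $e$.
\item Let $A=1+tA_1$. For small $t>0$ this is a positive operator. It is CP because $\Psi'(A)=e_0+te_1\geq 0$, and $\im\Psi'(A)=\im(e)=S$.
\end{itemize}
The lemma then applies, and no renormalisation is needed; the paper leaves this check implicit. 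Write $A=\sum_i\rankone{\Lambda(a_i)}{\Lambda(a_i)}$ and $x=\Psi'(A)$, which acts on $HS(L^2(M))$ by $R\mapsto\sum_i a_iRa_i^*$. The lemma's $\theta$ has $\theta(1)=\sum_i a_i^*a_i$. Since $x$ is self-adjoint on $HS(L^2(M))$ and $x(1)=e_0(1)+te_1(1)=1$ (because $1\in M'$), we get for every $R$:
\[ \Tr\Big(\sum_i a_i^*a_iR\Big)=\Tr(x(R))=\Tr(x(1)^*R)=\Tr(R). \]
Hence $\sum_i a_i^*a_i=1$, and $\theta$ is UCP. Equivalently, you could add a large multiple of $1=\Psi'^{-1}(e_0)$ to $\Psi'^{-1}(e)$ and then rescale $\theta$ by a scalar, which does not change $V^\theta$.
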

\begin{proof}
Let $e \in M\otimes M^\op$ be the projection onto $S \subseteq \mc B(L^2(M)) \cong HS(L^2(M))$, and let $e_0$ be the projection onto $M' \subseteq \mc B(L^2(M))$.  As $M'\subseteq S$, we have that $e_0 \leq e$ and so $e_1 = e-e_0$ is a projection, orthogonal to $e_0$.  As $S^*=S$, we have that $\tau(e)^{*\op} = e$, and similarly $\tau(e_0)^{*\op} = e_0$, so also $e_1$ satisfies this.  Setting $A_1 = \Psi'^{-1}(e_1)$, we see that $A_1$ is CP, and that $A_1^* = A_1$.  A calculation shows that $\Psi'^{-1}(e_0) = 1_{L^2(M)}$.  As $A_1$ is self-adjoint, there is some $t>0$ with $A = 1 + tA_1$ a positive operator.  Choose $\theta$ using the lemma, so $V^{\theta *}\circ V^\theta$ is equal to the image of $\Psi'(A) = e_0 + te_1$.  As $e_0, e_1$ are orthogonal summing to $e$, we have that this image is $\im(e) = S$, as claimed.
\end{proof}

\begin{remark}
In the proof of Lemma~\ref{lem:A_pos_can_square-root}, we could have directly defined $v$ from the family $(a_i)$, but this would have further obscured the motivation from \cite{Verdon_CovQuantumCombs}.  The map $u$ dilates a CP map, say $\theta_A \colon M \to M; x \mapsto u^*(x\otimes 1)u = \sum_i a_i x a_i^*$, which is exactly the map $\theta_A$ from \cite[Proposition~2.4]{daws2025quantumgraphsinfinitedimensionshilbertschmidts} (this map having occurred in many places in the literature, going back to \cite{Weaver_QuantumRelations}).  In \cite[Section~6.5]{daws2025quantumgraphsinfinitedimensionshilbertschmidts} we were sceptical that $\theta_A$ being considered as a CB (or even CP, as here) map could be profitable: this now seems hasty!  However, we remark that in the application in Proposition~\ref{prop:Verdon_All_QG_from_UCP}, $A$ is \emph{not} the adjacency operator of the quantum graph $S$.
\end{remark}

We will now show an analogous characterisation of which quantum graphs (symmetric relations, not necessarily reflexive) arise from CP maps, not assumed unital.  This could be done by adapting the proof above, but this is complicated by the fact that for the analogy of $e_0$, it is hard to compute $\Psi'^{-1}(e_0)$.  We instead use some of the theory we have developed.

\begin{theorem}\label{thm:QG_from_CP}
Let $M$ be a finite-dimensional von Neumann algebra, and let $S$ be a quantum relation over $M$ with $S=S^*$.  The following are equivalent:
\begin{enumerate}[(1)]
    \item\label{thm:QG_from_CP:one}
    there is a von Neumann algebra $N$ and a CP map $\theta \colon N\to M$ with $S = V^{\theta *} \circ V^\theta$;
    \item\label{thm:QG_from_CP:two}
    there is $x_0\geq 0$ in $S \cap M$, such that for every $x=x^*\in S$ there is $t>0$ with $-tx_0 \leq x \leq tx_0$;
\end{enumerate}
In this case, we can take $x_0 = \theta(1)$, and we may take $N$ to be finite-dimensional.
\end{theorem}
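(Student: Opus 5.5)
The proof goes in two directions: for \ref{thm:QG_from_CP:one}$\Rightarrow$\ref{thm:QG_from_CP:two} we read off the order bound directly from a dilation, and for the converse we aim to reduce to Lemma~\ref{lem:A_pos_can_square-root}.

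\emph{\ref{thm:QG_from_CP:one}$\Rightarrow$\ref{thm:QG_from_CP:two}.}  Let $v\colon H\to K\otimes L$ dilate $\theta$, so $\theta(y)=v^*(y\otimes 1)v$. By Remark~\ref{rem:graph_from_qr},
\[ S=V^{\theta*}\circ V^\theta=\lin\{v^*Tv : T\in N'\vnten\mc B(L)\}\clos^{w^*}. \]
Because $M$ is finite-dimensional, $\mc B(H)$ can be taken finite-dimensional, so no closure is needed. Put $x_0=\theta(1)=v^*v$. Then $x_0\geq 0$, $x_0\in M$, and $x_0\in S$ by taking $T=1$.

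Given $x=x^*\in S$, write $x=v^*Tv$. Since $x=x^*$, we may replace $T$ by its real part $(T+T^*)/2$, which still lies in $N'\vnten\mc B(L)$, so we may assume $T=T^*$. Then $-\|T\|\leq T\leq \|T\|$, and conjugating by $v$ gives $-\|T\|x_0\leq x\leq\|T\|x_0$. Any $t>\|T\|$ works, and this also gives the final remark that $x_0=\theta(1)$.

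\emph{\ref{thm:QG_from_CP:two}$\Rightarrow$\ref{thm:QG_from_CP:one}.}  Take $\varphi_M=\Tr_M$, let $M$ act on $L^2(M)$, and let $e\in M\otimes M^\op$ be the projection onto $S$. Since $S=S^*$, we have $\tau(e)^{*\op}=e$. Hence $A_1=\Psi'^{-1}(e)$ is CP with $A_1^*=A_1$, as in the proof of Proposition~\ref{prop:Verdon_All_QG_from_UCP}.

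We seek a positive operator $A\in\mc B(L^2(M))$ whose induced map is CP and for which $\im\Psi'(A)=S$. Lemma~\ref{lem:A_pos_can_square-root} then produces $\theta\colon\mc B(L)\to M$ with $L$ finite-dimensional and $V^{\theta*}\circ V^\theta=S$.

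In Proposition~\ref{prop:Verdon_All_QG_from_UCP} the role of $A_0$ was played by the identity, with $\Psi'(1)=e_0$ the projection onto $M'$. Here we instead build $A_0$ from $x_0$. Let $e_{x_0}$ be the projection onto the quantum relation $M'x_0M'\subseteq S$, and set $A_0=\Psi'^{-1}(e_{x_0})$. We then try $A=A_0+\epsilon A_1$ for small $\epsilon>0$, or $A=\Psi'^{-1}(e_{x_0}+\epsilon e')$ with $e'=e-e_{x_0}$. Each such $A$ is CP, being the inverse image of a positive element, and its $\Psi'$-image is $\im(e_{x_0})+\im(e')=S$.

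\emph{Main obstacle.}  The key step is showing that $A$ is a positive operator on $L^2(M)$, and this is exactly where condition \ref{thm:QG_from_CP:two} must enter. I would first establish a dictionary between operator positivity of $A=\sum_j\rankone{\Lambda(a_j)}{\Lambda(b_j)}$ and the $M$-order on elements of $S$.
\begin{itemize}
\item[(a)] With the Markov trace, the elements of $S$ should be exactly the sums $\sum a\,s\,b$ with $a,b\in M'$, which is the form appearing in Proposition~\ref{prop:image_links}\ref{prop:image_links:3}.
\item[(b)] The domination $-tx_0\leq x\leq tx_0$ for each self-adjoint $x$ in a basis of $S$ should then give $tA_0\pm A_1\geq 0$ in the sense needed.
\end{itemize}
An alternative route avoids $\Psi'$ and builds Kraus operators directly. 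Write $w=x_0^{1/2}$; the domination lets us write each self-adjoint $x\in S$ as $x=wcw$ with $c=c^*$ and $\|c\|\leq t$. Then choose $v$, with $N=\mc B(L^2(M))$ or a suitable finite-dimensional $N$, so that $v^*v=x_0$ and $\{v^*Tv\}$ spans $S$.

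If the $\Psi'$ route stalls, I would switch to this Kraus construction. In either route, finite-dimensionality of $N$ comes for free, because the construction in Lemma~\ref{lem:A_pos_can_square-root} uses $\mathbb C^n$.
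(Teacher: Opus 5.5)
Your argument for (1)$\Rightarrow$(2) is correct and is exactly the paper's. The gap is in (2)$\Rightarrow$(1). Reducing to Lemma~\ref{lem:A_pos_can_square-root} is legitimate, but you leave open the one step with content: that $A=\Psi'^{-1}(e_{x_0}+\epsilon e')$ is a positive operator on $L^2(M)$. Your sketch (b) points the wrong way. Order bounds $-tx_0\le x\le tx_0$ in $\mc B(H)$ for elements of $S$ do not by themselves give an operator inequality $tA_0\pm A_1\ge 0$ on $L^2(M)$. Moreover, $A_0$ is not invertible when $x_0$ is not. So no $\epsilon>0$ can work unless $\Psi'^{-1}(e')$ vanishes off the subspace on which $A_0$ is invertible, and you never establish this. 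Your Kraus alternative simply asserts a $v$ with $v^*v=x_0$ and $\{v^*Tv\}$ spanning $S$, which is the theorem itself.

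The missing idea is what (2) actually buys. Let $f_0$ be the support projection of $x_0$. Condition (2) forces $xf_0=f_0x=x$ for all $x\in S$:
\begin{itemize}
\item rescale to $-f_0\le x\le f_0$;
\item compress by $1-f_0$ to get $(1-f_0)x(1-f_0)=0$;
\item apply this to the positive operator $x+f_0$, giving $f_0(x+f_0)=x+f_0$;
\item take real and imaginary parts.
\end{itemize}
This is the only way (2) enters. The paper uses it to compress to $H_0=f_0(H)$, where $x_0$ is invertible. Then $S_0=x_0^{-1/2}p_0Sp_0^*x_0^{-1/2}$ is a reflexive quantum graph over $M_0$, and the paper applies Proposition~\ref{prop:Verdon_All_QG_from_UCP} to it. It transports the result back along $\theta_1(x)=x_0^{1/2}xx_0^{1/2}$ and the inclusion $\iota$ via Proposition~\ref{prop:cp_to_qr_composition}. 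Your ``$x=wcw$'' remark is this rescaling, but without Proposition~\ref{prop:Verdon_All_QG_from_UCP} it yields nothing.

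Your own route can also be closed with the same fact:
\begin{itemize}
\item $S=f_0Sf_0$ and $M'x_0=f_0M'x_0f_0$ give $e'=(f_0\otimes f_0^\op)e'(f_0\otimes f_0^\op)$. Hence $\Psi'^{-1}(e')=P\Psi'^{-1}(e')P$, where $P\Lambda(b)=\Lambda(f_0bf_0)$.
\item $\Psi'^{-1}(e')$ is self-adjoint, because $S=S^*$.
\item A direct computation with the Markov trace shows that $\Psi'^{-1}(e_{x_0})$ is $b\mapsto c\,x_0bx_0$ for a positive central $c$, nonzero exactly on the blocks where $x_0\neq 0$. This is a positive operator, invertible on $PL^2(M)$.
\end{itemize}
So $A\ge 0$ for small $\epsilon>0$, and Lemma~\ref{lem:A_pos_can_square-root} then applies. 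You also need to check that moving $S$ from $H$ to $L^2(M)$ preserves $x_0\in S$ and $S=f_0Sf_0$. None of these steps is in your proposal.
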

\begin{proof}
We let $M$ act on $H$, so $S\subseteq\mc B(H)$.  If we have a CP map as in \ref{thm:QG_from_CP:one}, then let $u\colon H \to K \otimes L$ dilate $\theta$, where $N\subseteq \mc B(K)$, and set $x_0 = u^*u = \theta(1) \in M$.  For $x=x^*\in S$ there is $y\in N'\vnten\mc B(L)$ with $x = u^*yu$, so with $z = \frac12(y+y^*)$ we have that $z=z^*$ and $x = u^*zu$.  As $-\|z\| \leq z \leq \|z\|$ we find that $-\|z\| x_0 \leq x \leq \|z\| x_0$, hence showing \ref{thm:QG_from_CP:two}.

Let \ref{thm:QG_from_CP:two} hold and set $f_0$ to be the support projection of $x_0$.  Let $x=x^*\in S$ and by rescaling suppose that $-x_0 \leq x \leq x_0$.  There are $\epsilon_2 > 0$ with $x_0 \leq \epsilon_2 f_0$.  So $-\epsilon_2 f_0 \leq x \leq \epsilon_2 f_0$, so again by rescaling, we may suppose that $-f_0 \leq x \leq f_0$.  Then left and right multiplying by $1-f_0$ shows that $(1-f_0)x(1-f_0)=0$.

Suppose $y\in\mc B(H)$ is positive with $(1-f_0)y(1-f_0)=0$.  The $C^*$-condition shows that $y^{1/2}(1-f_0)=0$ so $y(1-f_0)=0$ so $y = yf_0$ and hence also $y=y^*=f_0y$.  Applying this to $y=x+f_0$ shows that $f_0y = yf_0 = y$ so $xf_0 = f_0x = x$.  Given any $x\in S$, forming real and imaginary parts gives that also $xf_0 = f_0x =x$.

We shall be careful with notation: regard $f_0$ as an operator $H\to H$, and let $p_0 \colon H \to H_0 = f_0(H)$ be the corestriction, so $p_0^* \colon H_0 \to H$ is the inclusion.  Hence $p_0 p_0^* = 1$ while $p_0^*p_0 = f_0$.  Let $M_0 = p_0 M p_0^* \subseteq \mc B(H_0)$ a von Neumann algebra.  On $H_0$ we have that $x_0$ is invertible (by finite-dimensionality), and so we can set $S_0 = \{ x_0^{-1/2} p_0 x p_0^* x_0^{-1/2} : s\in S \}$ to see that $S_0$ is a quantum graph on $H_0$, with $S = \{ x_0^{1/2} p_0^* x p_0 x_0^{1/2} : x \in S_0 \}$.  There is hence some $N$ and a UCP $\theta_0 \colon N \to M_0$ with $V^{\theta_0 *} \circ V^{\theta_0} = S_0$.  Define $\theta_1 \colon M_0 \to M_0$ by $\theta_1(x) = x_0^{1/2} x x_0^{1/2}$, a CP map, with $V^{\theta_1} = M_0' x_0^{1/2}$.  By Proposition~\ref{prop:cp_to_qr_composition}, $V^{\theta_1 \circ \theta_0} = V^{\theta_0} \circ V^{\theta_1} = V^{\theta_0} x_0^{1/2}$.

Let $\iota \colon M_0 \to M$ be the inclusion, namely $\iota(y) = p_0^* y p_0$, a normal $*$-homomorphism, and let $V^\iota$ be the quantum function, 
\begin{align*}
V^\iota &= \{ v \in \mc B(H, H_0) : yv = v\iota(y) = v p_0^* y p_0\ (y\in M_0) \}   \\
&= \{ v \in \mc B(H, H_0) : p_0 x p_0^* v = v f_0 x f_0 \ (x\in M) \}  \\
&= \{ v \in \mc B(H, H_0) : f_0 x f_0 p_0^* v = p_0^* v f_0 x f_0 \ (x\in M) \},
\end{align*}
as $p_0 f_0 = f_0^*$ and so forth.  So $v\in V^\iota$ if and only if $p_0^*v \in (f_0Mf_0)' = M_0' \oplus \mc B(H_0^\perp)$, acting on $H = H_0 \oplus H_0^\perp$.  Equivalently, $v = y' p_0$ for some $y'\in M_0'$.  By \cite[Proposition~II.3.10]{TakesakiI}, $M_0' = p_0 M' p_0^*$, and so $V^\iota= p_0 M' f_0$.

Set $\theta = \iota \circ \theta_1 \circ \theta_0 \colon N \to M$ so $V^\theta = V^{\theta_0} x_0^{1/2} \circ p_0 M' f_0$ by these calculations.  As $x_0\in M$ commutes with $f_0$, we see that $V^\theta = V^{\theta_0} p_0 x_0^{1/2}$ and so $V^{\theta*} \circ V^\theta = x_0^{1/2} p_0^* S_0 p_0 x_0^{1/2} = S$, as required to show \ref{thm:QG_from_CP:one}.
\end{proof}

All of these arguments are rather finite-dimensional in nature.  There is a short proof of Proposition~\ref{prop:Verdon_All_QG_from_UCP} given in  \cite[Proposition~6.7]{kornell2026quantumgraphshomomorphisms}, which again works with TPCP maps not UCP maps.  The idea of \cite[Proposition~6.7]{kornell2026quantumgraphshomomorphisms} is to use Duan's argument to find a TPCP map $\phi \colon \mc B(H) \to \mc B(K)$ which gives $S$, and then to simply restrict this to $M \subseteq\mc B(H)$ say giving $\hat\theta \colon M \to \mc B(K)$; a small check is then required to check that this doesn't change $S$ (using here that $S$ is an $M'$-bimodule).  As in Lemma~\ref{lem:tcpc_ucp}, the associated UCP map $\theta \colon \mc B(K) \to M$ satisfies the relation
\[ \Tr_M(\theta(a)b) = \Tr(a\hat\theta(b)) = \Tr(a\iota(\phi(b))) \qquad (a\in\mc B(K), b\in M), \]
where $\iota$ is the inclusion $M\to\mc B(H)$.  So $\theta = \hat\iota \circ \hat\phi$.  Indeed, with $\Tr_M$ the Markov trace, one readily checks that $\hat\iota \colon \mc B(H) \to M$ is a conditional expectation: this might be as expected, because it gives a way to change the codomain of a UCP map from $\mc B(H)$ to $M$.  It is possible to adapt this conditional expectation idea to work more generally, but unfortunately, having a \emph{normal} conditional expectation $\mc B(H) \to M$ is a rather strong condition, compare \cite[Exercise~IX.4.1]{TakesakiII}, forcing $M$ to be a direct (possibly infinite) sum of matrix algebras.  It would be very interesting to have tools to allow us to push these ideas over to more general von Neumann algebras.

\subsection{Ordering}

In earlier sections, the (inclusion) ordering on quantum relations was very important.  We now look at what this translates to for quantum adjacency operators.  Let $A,e,V$ be related.  As the bijection between $V$ and $\im(e)$ from Proposition~\ref{prop:image_links} is order preserving, we obtain the usual order on projections: $e_1 \leq e_2$ if and only if $e_2 e_1 = e_1$, equivalently, $e_1 e_2 = e_1$.  This is then seen to be equivalent to $A_2 \star A_1 = A_1 \iff A_1 \star A_2 = A_1$.

However, as composition of relations is given by composition of adjacency operators, Proposition~\ref{prop:composition_of_adj_ops}, which of course may not preserve Schur idempotency, giving a practical condition for e.g.\@ $A$ to be cosurjective seems hard, and we shall not pursue this here.  However, we do have the following.

\begin{proposition}\label{prop:when_A_coinj}
Let $A \colon M \to N$ be a quantum adjacency operator.  The following are equivalent:
\begin{enumerate}[(1)]
    \item\label{prop:when_A_coinj:1}
    $A$ is coinjective;
    \item\label{prop:when_A_coinj:2} 
    $A \circ (JA^*J)$, regarded as a map on $L^2(N)$, is a member of $N'$;
    \item\label{prop:when_A_coinj:3}
    there is a central positive element $x_0\in N$ with $(A \circ (JA^*J))(x) = xx_0 = x_0^{1/2} x x_0^{1/2}$ for each $x\in N$.
\end{enumerate}
\end{proposition}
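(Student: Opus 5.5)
We will work throughout in the standard picture: $M$ on $L^2(M)$, $N$ on $L^2(N)$, with $V$ the quantum relation associated to $A$ by Proposition~\ref{prop:image_links}. Since $A$ is Schur idempotent, $\Psi'(A)$ is a projection, and $A$ is CP by Theorem~\ref{thm:Psi'_CP_idem_to_proj}. Here ``$A$ is coinjective'' means that $V\circ V^*\subseteq N'$.

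The first step translates coinjectivity into a statement about composites of adjacency operators. By Proposition~\ref{prop:image_links}\ref{prop:image_links:4}, $V^*$ is associated to $A^*_{KMS}=JA^*J$. By Proposition~\ref{prop:composition_of_adj_ops}, the quantum relation associated to $B:=A\circ(JA^*J)\colon N\to N$ is then $V\circ V^*$. Using Proposition~\ref{prop:image_links}\ref{prop:image_links:3}, we have $V\circ V^* = N'\nabla^{1/4}B\nabla^{-1/4}N'$. Since $N'$ contains $1$, coinjectivity is equivalent to $\nabla_N^{1/4}B\nabla_N^{-1/4}\in N'$.

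The next step removes the modular twist. If $c\in N'$ then $\nabla^{-1/4}c\nabla^{1/4}\in N'$, because $\nabla^{it}N'\nabla^{-it}=N'$ and, in finite dimensions, analytic continuation of $t\mapsto \nabla^{it}c\nabla^{-it}$ stays in $N'$. Hence $\nabla^{1/4}B\nabla^{-1/4}\in N'$ if and only if $B\in N'$, which proves \ref{prop:when_A_coinj:1}$\iff$\ref{prop:when_A_coinj:2}.

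For \ref{prop:when_A_coinj:2}$\iff$\ref{prop:when_A_coinj:3}, first note that $N'=JNJ$. So $B\in N'$ means $B=Jx_1J$ for some $x_1\in N$, acting by $B\Lambda(x)=\Lambda(x\,\sigma_{-i/2}(x_1^*))$. Viewed as a map on $N$, this is $x\mapsto x y$ with $y=\sigma_{-i/2}(x_1^*)\in N$. Next use that $B$ is CP, being a composite of CP maps, so $B(1)=y\ge0$. A right multiplication $x\mapsto xy$ is CP, or even just positive, only if $y$ is central: positivity forces $x^*xy$ to be self-adjoint for all $x$, i.e.\@ $p y = y p$ for all projections $p$, so $y$ is central. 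Setting $x_0=y$ gives \ref{prop:when_A_coinj:3}, and $xx_0=x_0^{1/2}xx_0^{1/2}$ holds by centrality. Conversely, if \ref{prop:when_A_coinj:3} holds, then $B=Jx_0'J$ for a suitable $x_0'$: since $x_0$ is central, $\sigma_t(x_0)=x_0$ and $Jx_0J\Lambda(x)=\Lambda(x x_0^*)=\Lambda(xx_0)$. So $B=Jx_0J\in N'$.

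I expect the main obstacle to be the step handling the modular conjugation by $\nabla^{1/4}$ in the first equivalence. We must make sure that membership in $N'$ is invariant under $\nabla^{\pm1/4}$-conjugation. Equally, we must check that Proposition~\ref{prop:composition_of_adj_ops} applies even though $B$ is not Schur idempotent; this is fine because that proposition only needs the image-based association of Proposition~\ref{prop:image_links}.
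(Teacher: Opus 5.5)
Your proposal is correct and follows the paper's proof essentially step for step. You rewrite $V\circ V^*$ as $N'\nabla^{1/4}\bigl(A\circ(JA^*J)\bigr)\nabla^{-1/4}N'$ via Proposition~\ref{prop:composition_of_adj_ops} and Proposition~\ref{prop:image_links}\ref{prop:image_links:3}, remove the modular twist using invariance of $N'$ under $\nabla^{\pm1/4}$-conjugation, and then identify elements of $N'$ with right multiplications, whose positivity forces centrality; your converse, exhibiting $A\circ(JA^*J)=Jx_0J$ directly, is if anything slightly more explicit than the paper's closing remark.
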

\begin{proof}
Let $V$ be the quantum relation associated to $A$, so by definition, $A$ is coinjective when $V$ is, that is, when $V\circ V^* \subseteq N'$.  By Proposition~\ref{prop:composition_of_adj_ops}, $V\circ V^*$ is the image of $A \circ (JA^*J)$ which by Proposition~\ref{prop:image_links}\ref{prop:image_links:3} means $V\circ V^* = N' \nabla^{1/4} ( A \circ (JA^*J) ) \nabla^{-1/4} N'$.  As $\nabla$ implements a one-parameter group on $N'$, we see that $N' \supseteq V\circ V^*$ if and only if
\begin{align*}
& \quad    N' \supseteq \nabla^{1/4} N' ( A \circ (JA^*J) ) N' \nabla^{-1/4} \\
\iff & \quad  \nabla^{-1/4} N' \nabla^{1/4} = N' \supseteq N' ( A \circ (JA^*J) ) N',
\end{align*}
and this is equivalent to $A \circ (JA^*J) \in N'$ (as $1\in N'$).
As any member of $N'$ is of the form $\Lambda(x) \mapsto \Lambda(xx_0)$ for some $x_0\in N$, and if the map $x\mapsto xx_0$ is positive, then $x_0 = 1x_0$ is positive, and for each $x\geq 0$ we have that $xx_0 = (xx_0)^* = x_0 x$, so taking the linear span of such $x$, we conclude that $x_0$ is central.  Conversely, $x\mapsto x_0^{1/2} x x_0^{1/2}$ is completely positive for any positive $x_0$.
\end{proof}

As coinjective $V$ correspond to $*$-homomorphisms, the next section gives an explicit, if slightly complicated, expression for such $A$.

\subsection{Quantum functions}\label{sec:adj_ops_from_hms}

We consider what Theorem~\ref{thm:funcs_to_HMs} says about quantum adjacency operators.  Let $\theta\colon N \to M$ be a $*$-homomorphism, where again $M,N$ are finite-dimensional acting on $L^2(M)$ and $L^2(N)$, respectively.  Consider the quantum relation $V^\theta$ as in Definition~\ref{prop:defn_V_theta}.  Our ultimate aim is to find the quantum adjacency operator associated with $V^\theta$.

\begin{proposition}\label{prop:hm_to_adj}
Let $M, N$ be finite-dimensional, let $\theta\colon N \to M$ be a $*$-homomorphism, and form the quantum relation $V^\theta \subseteq \mc B(L^2(M), L^2(N))$.
\begin{enumerate}[(1)]
\item\label{prop:hm_to_adj:one}
There is a linear bijection between $V^{\theta*}$ and $\{ \xi\in L^2(M) : \theta(1)\xi=\xi \}$ where $v^*\in V^{\theta*}$ bijects with $\xi = v^*\Lambda(1)$, and $\xi$ determines $v^*$ as $v^*\Lambda(y) = \theta(y)\xi$ for $y\in N$.
\item\label{prop:hm_to_adj:two}
Let $\hat\theta \colon L^2(N) \to L^2(M)$ be the map induced by $\theta$, so $\hat\theta\Lambda(y) = \Lambda(\theta(y))$ for $y\in N$.  The $M'$-$N'$-bimodule generated by $\hat\theta$ is $V^{\theta*}$.
\end{enumerate}
\end{proposition}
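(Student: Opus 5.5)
For \ref{prop:hm_to_adj:one}, I would work directly from the definition of $V^\theta$ in Proposition~\ref{prop:defn_V_theta}. An element $v^*\in V^{\theta*}$ is exactly an operator $L^2(N)\to L^2(M)$ with $v^*y = \theta(y)v^*$ for all $y\in N$. Set $\xi = v^*\Lambda(1)$. Then
\[ v^*\Lambda(y) = v^* y\Lambda(1) = \theta(y)v^*\Lambda(1) = \theta(y)\xi, \]
so $v^*$ is determined by $\xi$, since $\Lambda$ is a bijection in finite dimensions. Also $\theta(1)\xi = \theta(1)v^*\Lambda(1) = v^*\Lambda(1)=\xi$. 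This gives a linear injection $V^{\theta*}\to\{\xi : \theta(1)\xi=\xi\}$.

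For surjectivity, given $\xi$ with $\theta(1)\xi=\xi$, I would define $w\colon L^2(N)\to L^2(M)$ by $w\Lambda(y) = \theta(y)\xi$. This is linear and automatically bounded by finite-dimensionality. We have $w y\Lambda(y_1) = \theta(yy_1)\xi = \theta(y)w\Lambda(y_1)$, so $w\in V^{\theta*}$, and $w\Lambda(1)=\theta(1)\xi=\xi$.

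For \ref{prop:hm_to_adj:two}, note first that $\hat\theta$ is exactly the element of $V^{\theta*}$ corresponding under \ref{prop:hm_to_adj:one} to $\xi=\Lambda(\theta(1))$, since $\theta(y)\Lambda(\theta(1))=\Lambda(\theta(y))$. Hence the $M'$-$N'$-bimodule $W$ generated by $\hat\theta$ lies inside $V^{\theta*}$, because $V^{\theta*}$ is such a bimodule and is closed (everything is finite-dimensional). For the reverse inclusion, I would use \ref{prop:hm_to_adj:one}: it suffices to show that the vectors $w\Lambda(1)$, for $w\in W$, exhaust $\theta(1)L^2(M)$. Already taking $a'\hat\theta$ with $a'\in M'$ gives vectors $a'\Lambda(\theta(1))$. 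Writing $a' = JaJ$ for $a\in M$, we get $JaJ\Lambda(\theta(1)) = \Lambda(\theta(1)\sigma_{-i/2}(a^*))$. As $a$ ranges over $M$, $\sigma_{-i/2}(a^*)$ ranges over all of $M$, so these vectors are $\Lambda(\theta(1)M)=\theta(1)L^2(M)$.

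Then for $v^*\in V^{\theta*}$ with $\xi=v^*\Lambda(1)$, I pick $a'\in M'$ with $a'\Lambda(\theta(1))=\xi$. Both $v^*$ and $a'\hat\theta$ lie in $V^{\theta*}$ and agree on $\Lambda(1)$, so by the uniqueness in \ref{prop:hm_to_adj:one}, $v^*=a'\hat\theta\in W$.

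The only mildly delicate point is this use of the modular identity $JaJ\Lambda(x)=\Lambda(x\sigma_{-i/2}(a^*))$ to see that $M'\Lambda(\theta(1))$ is all of $\theta(1)L^2(M)$. I would check the sign conventions against the paper's formula $J\Lambda(x)=\Lambda(\sigma_{-i/2}(x^*))$, although only surjectivity matters here. Notably, the right $N'$-action is not even needed.
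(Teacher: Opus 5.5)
Your proposal is correct and follows essentially the same route as the paper: part (1) is argued identically, and for part (2) the paper likewise writes elements of $M'$ as right multiplications $\Lambda(b)\mapsto\Lambda(bx_0)$ (which you derive from $JaJ\Lambda(x)=\Lambda(x\sigma_{-i/2}(a^*))$), computes that $a'\hat\theta b'$ corresponds to $\xi=\Lambda(\theta(y_0)x_0)$, and observes that every $\xi$ with $\theta(1)\xi=\xi$ arises in this way. The only cosmetic differences are that you obtain $W\subseteq V^{\theta*}$ from the bimodule property of $V^{\theta*}$ rather than by that direct computation, and that you state explicitly that $V^{\theta*}=M'\hat\theta$ already.
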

\begin{proof}
We have that $v\in V^\theta$ when $v\colon L^2(M) \to L^2(N)$ and $v^* y = \theta(y) v^*$ for $y\in N$.  With $\xi=v^*\Lambda(1)$ we see that $v^*\Lambda(y) = v^*y\Lambda(1) = \theta(y)\xi$ for $y\in N$, and so $v^*$, hence $v$, is determined by $\xi$.  Further, $\theta(1)\xi = \theta(1)v^*\Lambda(1) = v^*\Lambda(1)=\xi$.  Given any $\xi$ with $\xi=\theta(1)\xi$ define $t \colon \Lambda(y) \mapsto \theta(y)\xi$ for $y\in N$, so $v=t^* \colon L^2(M) \to L^2(N)$ satisfies $v^* y \Lambda(z) = t\Lambda(yz) = \theta(yz) \xi = \theta(y) t\Lambda(z)$ for $y,z\in N$, and so $v^*y = \theta(y)v^*$ for $y\in N$ and hence $v\in V^\theta$, with $v^*\Lambda(1) =\theta(1)\xi=\xi$.  So \ref{prop:hm_to_adj:one} holds.

As $JMJ=M'$ and so forth, for $a'\in M', b'\in N'$ there are $x_0\in M, y_0\in N$ with $a'\Lambda_M(b) = \Lambda_M(bx_0)$ and $b'\Lambda_N(y) = \Lambda_N(yy_0)$ for $y\in N, b\in M$.  Thus
\[ a' \hat\theta b' \Lambda(y) = a' \hat\theta \Lambda(yy_0) = a'\Lambda(\theta(y)\theta(y_0))
= \theta(y) \Lambda(\theta(y_0)x_0), \]
and so $v^* = a' \hat\theta b' \Lambda(y)\in V^{\theta *}$ is associated to the vector $\xi = \Lambda(\theta(y_0)x_0)$, as in part \ref{prop:hm_to_adj:one}.  As any $\xi\in L^2(N)$ with $\theta(1)\xi=\xi$ arises in this way, we have shown \ref{prop:hm_to_adj:two}.
\end{proof}

When $\varphi_N=\Tr_N$ the Markov trace, we have that $m_N m_N^* = 1$, see Remark~\ref{rem:delta_forms}, and so $\theta$ being a homomorphism means that $m_M(\hat\theta\otimes\hat\theta) = \hat\theta m_N$ and so $\hat\theta \star \hat\theta = \hat\theta m_N m_N^* = \hat\theta$, so $\theta$ is Schur-idempotent, and of course $\theta$ is CP.  If also $\varphi_M$ is a trace, then point \ref{prop:hm_to_adj:two} and Proposition~\ref{prop:image_links}\ref{prop:image_links:3} shows that $\hat\theta$ must be the adjacency operator associated to $V^{\theta *}$.  Hence also $J\hat\theta^*J = \hat\theta^*$ (as $\varphi_M$ is a trace) is the adjacency operator of $V^\theta$.

In the general case, Proposition~\ref{prop:functor_A_to_V} doesn't help us to find the adjacency operator, because here we are seeking the \emph{Schur idempotent} CP map $A$ which bijects with $V$, not simply some CP map.  However, Proposition~\ref{prop:A_vs_theta} can provide some guidance in the non-tracial setting.  Let $\theta\colon N \to M$ be our $*$-homomorphism, so we seek $\phi\colon N \to M$ CP with $V^\phi = \nabla^{1/4} V^\theta \nabla^{-1/4}$, as then the $A$ with $A^*_{KMS}=\phi$ is associated to $V^\theta$, and we might hope to adjust $A$ to be Schur idempotent.  By Remark~\ref{rem:nabla_or_Q} we can instead look for $V^\phi = Q_N^{1/4} V^\theta Q_M^{-1/4}$, and this then allows us to work with different Hilbert spaces to the $L^2$ spaces.  In particular, by adjusting $K$ with $N\subseteq\mc B(K)$, we may suppose there is $r\colon H\to K$ with $r\theta(y) = yr$ for $y\in N$, so $\theta(y) = r^*yr$ is a dilation.  Hence $V^\theta = N'r$.  Set $u = Q_N^{1/4} r Q_M^{-1/4}$ and define $\phi(y) = u^*yu$ for $y\in N$, so $V^\phi = N' u = N'  Q_N^{1/4} r Q_M^{-1/4} =  Q_N^{1/4} N' r Q_M^{-1/4} =  Q_N^{1/4} V\theta Q_M^{-1/4}$ as desired.  Notice that then $\phi(y) = Q_M^{-1/4} \theta(Q_N^{1/4}yQ_N^{1/4}) Q_M^{-1/4}$.

Motivated by this, and working with examples, we try defining
\begin{align}\label{eq:defn_A_for_Vthetaadj}
A(x) = Q_M^{-1/4} u^{1/2} \theta(Q_N^{1/4} x Q_N^{1/4}) u^{1/2} Q_M^{-1/4}   \qquad (x\in N),
\end{align}
where here $u\in M \cap \theta(N)'$ is some positive operator which we shall define shortly.  Then $A$ maps into $M$ and is completely positive.

\begin{lemma}\label{lem:when_u_inv}
Let $u$ be invertible when restricted to $\im\theta(1)$.  Then $M' \nabla_M^{1/4} A \nabla_{N}^{-1/4} N' = V^{\theta *}$.
\end{lemma}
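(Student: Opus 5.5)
The plan is to compute the operator $T = \nabla_M^{1/4} A \nabla_N^{-1/4}\colon L^2(N)\to L^2(M)$ explicitly on vectors $\Lambda(y)$. I would then use Proposition~\ref{prop:hm_to_adj}\ref{prop:hm_to_adj:one}, which identifies elements of $V^{\theta*}$ with vectors $\xi\in L^2(M)$ satisfying $\theta(1)\xi=\xi$, via $v^*\Lambda(y)=\theta(y)\xi$. Since everything is finite-dimensional, no closures are needed.

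\emph{Step 1 (the generator lies in $V^{\theta*}$).} Using $\nabla\Lambda(x)=\Lambda(QxQ^{-1})$, we have $\nabla_N^{-1/4}\Lambda(y)=\Lambda(Q_N^{-1/4}yQ_N^{1/4})$. Feeding this into \eqref{eq:defn_A_for_Vthetaadj} gives $Q_M^{-1/4}u^{1/2}\theta(yQ_N^{1/2})u^{1/2}Q_M^{-1/4}$, and applying $\nabla_M^{1/4}$ conjugates by $Q_M^{1/4}$. We should therefore obtain
\[ T\Lambda(y) = \Lambda\big(u^{1/2}\theta(y)\theta(Q_N^{1/2})u^{1/2}Q_M^{-1/2}\big) = \theta(y)\,\xi_0, \qquad \xi_0 = \Lambda\big(u\,\theta(Q_N^{1/2})\,Q_M^{-1/2}\big). \]
The second equality uses $u\in\theta(N)'$, so that $u^{1/2}$ commutes with $\theta(y)$. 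Also $\theta(1)\xi_0=\xi_0$, since $\theta(1)$ commutes with $u$ and $\theta(1)\theta(Q_N^{1/2})=\theta(Q_N^{1/2})$. Hence $T\in V^{\theta*}$. As $V^{\theta*}$ is an $M'$-$N'$-bimodule, this gives $M'TN'\subseteq V^{\theta*}$.

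\emph{Step 2 (the reverse inclusion).} As in the proof of Proposition~\ref{prop:hm_to_adj}\ref{prop:hm_to_adj:two}, write $b'\in N'$ as $\Lambda(y)\mapsto\Lambda(yy_0)$ and $a'\in M'$ as $\Lambda(b)\mapsto\Lambda(bx_0)$. Then
\[ a'Tb'\Lambda(y) = \Lambda\big(\theta(y)\theta(y_0)\,u\,\theta(Q_N^{1/2})\,Q_M^{-1/2}x_0\big). \]
This corresponds, in the sense of Proposition~\ref{prop:hm_to_adj}\ref{prop:hm_to_adj:one}, to the vector $\Lambda(\theta(y_0)u\theta(Q_N^{1/2})Q_M^{-1/2}x_0)$. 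Choose $y_0=Q_N^{-1/2}$; using again that $u$ commutes with $\theta(N)$, this vector becomes $\Lambda(u\theta(1)Q_M^{-1/2}x_0)$.

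Now let $w\in M$ with $\theta(1)w=w$ be arbitrary. Since $u\in M\cap\theta(N)'$ commutes with the projection $\theta(1)$ and is invertible on $\im\theta(1)$, there is $u^{-1}\theta(1)\in M$, namely the inverse computed in the corner $\theta(1)M\theta(1)$. Setting $x_0=Q_M^{1/2}u^{-1}\theta(1)w$, we get $u\theta(1)Q_M^{-1/2}x_0=\theta(1)w=w$. Thus every vector $\xi=\Lambda(w)$ with $\theta(1)\xi=\xi$ arises, and by Proposition~\ref{prop:hm_to_adj}\ref{prop:hm_to_adj:one} we obtain $V^{\theta*}\subseteq M'TN'$.

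The main obstacle is bookkeeping rather than ideas. One must consistently convert between $\nabla^{\pm1/4}$ and conjugation by $Q^{\pm1/4}$, and track left versus right multiplication when identifying $M'$ with right multiplications. The one genuine point is producing $u^{-1}\theta(1)$ inside $M$: this is where the invertibility hypothesis enters, together with the fact that $u$ commutes with $\theta(1)$, which holds because $u\in\theta(N)'$.
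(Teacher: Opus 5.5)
Your proposal is correct and follows the paper's proof essentially line by line. It uses the same computation $a'\nabla_M^{1/4}A\nabla_N^{-1/4}b'\Lambda(y)=\Lambda\big(\theta(y)\theta(y_0Q_N^{1/2})uQ_M^{-1/2}x_0\big)$, the same choice $y_0=Q_N^{-1/2}$, and the same corner inverse of $u$ combined with Proposition~\ref{prop:hm_to_adj}. Your Step 1 only makes explicit the inclusion $M'\nabla_M^{1/4}A\nabla_N^{-1/4}N'\subseteq V^{\theta*}$, via the bimodule property, which the paper leaves implicit.
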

\begin{proof}
Let $a'\in M'$ be given by $a'\Lambda(a)=\Lambda(ax_0)$ for some $x_0\in M$, and similarly let $b'\in N'$ be associated to $y_0\in N$.  Then for $x\in N$,
\begin{align*}
a' \nabla_M^{1/4} A \nabla_N^{-1/4} b' \Lambda(x)
&= a' \nabla_M^{1/4} A \Lambda(Q_N^{-1/4} x y_0 Q_N^{1/4}) \\
&= a' \nabla_M^{1/4} \Lambda\big(   Q_M^{-1/4} u^{1/2} \theta(Q_N^{1/4} Q_N^{-1/4} x y_0 Q_N^{1/4} Q_N^{1/4}) u^{1/2} Q_M^{-1/4}  \big) \\
&= \Lambda\big(  \theta( x) \theta( y_0 Q_N^{1/2}) u Q_M^{-1/2} x_0  \big),
\end{align*}
where here we used that $u \in \theta(N)'$.  Set $y_0 = Q_N^{-1/2}$ so we obtain the term $\theta(1)$.  As $\theta(1) u = u\theta(1)$, it follows that $\im\theta(1)$ is an invariant subspace for $u$, and our hypothesis is that $u$ is invertible on this subspace, so there is $w \in M$ with $uw = wu = \theta(1)$.  For any $z_0\in M$ set $x_0 = Q_M^{1/2} w z_0$, to obtain the map $\Lambda(x) \mapsto \theta(x) \Lambda(z_0)$.  By Proposition~\ref{prop:hm_to_adj}, for each $v^* \in V^{\theta *}$ there is $\xi\in L^2(M)$ with $v^*\Lambda(x) = \theta(x)\xi$.  As $\xi = \Lambda(z_0)$ for some $z_0\in M$, this completes the proof.
\end{proof}

It now seems necessary to explicitly recognise $N$ as a direct sum of matrix algebras, say $N = \bigoplus_\alpha \mathbb M_{n(\alpha)}$.  For $x\in N$ let $x = (x_\alpha)$ in this direct sum.  Let $(1_\alpha)$ be the family of minimal central idempotents in $N$, so $1_\alpha$ is the unit of $\mathbb M_{n(\alpha)}$.
For a given $\alpha$, the restriction of $\theta$ to $\theta_\alpha \colon \mathbb M_{n(\alpha)} \to \theta(1_\alpha) M \theta(1_\alpha) \subseteq \mc B(\im \theta(1_\alpha))$ is a unital $*$-homomorphism, and so $\im \theta(1_\alpha) \cong \mathbb C^{n(\alpha)} \otimes K_\alpha$ for some auxiliary (finite-dimensional) Hilbert space $K_\alpha$, with $\theta_\alpha(x) = x\otimes 1$ under this isomorphism.  Our element $u \in \theta(N)' \cap M$ restricts to $\im\theta(1_\alpha)$ giving an element which commutes with $\mathbb M_{n(\alpha)} \otimes 1$, and so is of the form $1\otimes u_\alpha$.

\begin{lemma}\label{lem:cond_u_SI}
We have that $A$ is Schur-idempotent if and only if, for each $\alpha$, the positive $u_\alpha \in \mc B(K_\alpha)$ satisfies
\[ (\Tr\otimes\id)\big( (Q_{N,\alpha}^{-3/4} \otimes 1) (\theta(1_\alpha) Q_M^{-1/2} \theta(1_\alpha)) (1\otimes u_\alpha) (Q_{N,\alpha}^{1/4} \otimes 1) \big) = 1. \]
Here we treat $\theta(1_\alpha) Q_M^{-1/2} \theta(1_\alpha)$ as a member of $\mc B(\im \theta(1_\alpha)) \cong \mathbb M_{n(\alpha)} \otimes \mc B(K_\alpha)$.
\end{lemma}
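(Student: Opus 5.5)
The plan is to use the criterion from Theorem~\ref{thm:Psi'_CP_idem_to_proj}: $A$ is Schur idempotent exactly when $A\star A = A$, where $A\star A = m_M(A\otimes A)m_N^*$. Since $A$ is given by the explicit formula \eqref{eq:defn_A_for_Vthetaadj}, we compute $m_M(A\otimes A)m_N^*$ directly as a map $N\to M$. The key input is an explicit formula for $m_N^*$ with respect to the GNS inner product of $\varphi_N = \Tr_N(Q_N\,\cdot)$. Using the decomposition $N=\bigoplus_\alpha \mathbb M_{n(\alpha)}$ with matrix units $e^\alpha_{ij}$, we first check that
\[ m_N^*\Lambda(x) = \sum_{\alpha,i,j,k} \lambda_\alpha\,\Lambda(x\, Q_N^{-1}\text{-weighted } e^\alpha_{ik}) \otimes \Lambda(e^\alpha_{kj}\cdots), \]
i.e.\ $m_N^*(x) = \sum_{\alpha}\sum_{k,l} c_\alpha\, x f^\alpha_{kl}\otimes g^\alpha_{lk}$ for suitable elements built from $Q_{N,\alpha}^{-1}$ and the Markov normalisation $n(\alpha)$. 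The cleanest route is to verify $(\Lambda(a)\otimes\Lambda(b)|m_N^*\Lambda(x)) = \varphi_N(b^*a^*x)$ on the given expression. By the $N$-bimodule property of $m_N^*$ (that is, $m_N^*(yxz)=(y\otimes1)m_N^*(x)(1\otimes z)$), it suffices to compute $m_N^*(1_\alpha)$ for each $\alpha$.

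Substituting gives
\[ (A\star A)(x)=\sum m_M\big(A(x_{(1)})\otimes A(x_{(2)})\big). \]
Writing $A(x) = Q_M^{-1/4}u^{1/2}\theta(Q_N^{1/4}xQ_N^{1/4})u^{1/2}Q_M^{-1/4}$, the product $A(a)A(b)$ contains the middle factor
\[ \theta(Q_N^{1/4}aQ_N^{1/4})\,u^{1/2}Q_M^{-1/2}u^{1/2}\,\theta(Q_N^{1/4}bQ_N^{1/4}). \]
After summing over the Sweedler terms from $m_N^*(1_\alpha)$, which are of the form $\sum_{k,l}(\text{stuff})e^\alpha_{kl}\otimes e^\alpha_{lk}(\text{stuff})$, this middle factor becomes a ``partial trace'' sandwiched between $\theta(\cdot)$'s. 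Concretely, in the picture $\im\theta(1_\alpha)\cong\mathbb C^{n(\alpha)}\otimes K_\alpha$, with $\theta_\alpha(y)=y\otimes1$ and $u=1\otimes u_\alpha$, we use the identity $\sum_{k,l}(e_{kl}\otimes1)\,Z\,(e_{lk}\otimes1)=1\otimes(\Tr\otimes\id)(Z)$. This collapses the sum to
\[ A(x)\text{-shape with } u \text{ replaced by } u^{1/2}\big(1\otimes T_\alpha\big)u^{1/2}\text{-type term}, \]
where $T_\alpha=(\Tr\otimes\id)\big((Q_{N,\alpha}^{-3/4}\otimes1)\,\theta(1_\alpha)Q_M^{-1/2}\theta(1_\alpha)\,(1\otimes u_\alpha)(Q_{N,\alpha}^{1/4}\otimes1)\big)$. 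We then need to track carefully where the powers of $Q_N$ land: the $Q_N^{1/4}$'s from $A$ and the $Q_N^{-1}$ weights from $m_N^*$ should combine into the $Q^{-3/4}$ and $Q^{1/4}$ factors.

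Hence $A\star A$ equals $A$ with $u_\alpha$ replaced by $u_\alpha^{1/2}T_\alpha u_\alpha^{1/2}$ on each block, or equivalently $u_\alpha T_\alpha$ using commutation. For the converse, we show that $A$ determines this middle block-factor: $A$ is injective in the relevant sense on $\theta(1_\alpha)$-blocks, by evaluating at $x=Q_N^{-1/4}e^\alpha_{ij}Q_N^{-1/4}$. Comparing blocks, $A\star A=A$ iff $T_\alpha=1$ (on the support of $u_\alpha$; presumably $u$ is invertible there, as in Lemma~\ref{lem:when_u_inv}). The main obstacle is the bookkeeping of $Q_N$ powers in $m_N^*$ and the order of the factors inside the partial trace. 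I would first compute $m_N^*(1_\alpha)$ explicitly, and sanity-check it in the tracial case $Q=1$, where the condition should reduce to $(\Tr\otimes\id)(\theta(1_\alpha)Q_M^{-1/2}\theta(1_\alpha)(1\otimes u_\alpha))=1$.
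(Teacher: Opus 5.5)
Your plan follows the paper's proof. You evaluate $m_M(A\otimes A)m_N^*$ block by block from an explicit formula for $m_N^*$, use $u\in\theta(N)'$ to gather the $u$'s, and collapse the sum with $\sum_k(e_{ik}\otimes 1)Z(e_{kj}\otimes 1)=e_{ij}\otimes(\Tr\otimes\id)(Z)$ in $\im\theta(1_\alpha)\cong\mathbb C^{n(\alpha)}\otimes K_\alpha$. Then you compare with $A$. Reducing to $m_N^*(1_\alpha)$ via the bimodule property is valid for non-tracial $\varphi_N$, and is equivalent to the paper's evaluation on the spanning set $e^\alpha_{ij}Q_N^{-1}$.

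Carried out, the $\alpha$-block of $(A\star A)(x)$ is that of $A(x)$ with $u_\alpha$ replaced by $c_\alpha u_\alpha T_\alpha=c_\alpha u_\alpha X_\alpha u_\alpha$. Here $T_\alpha=X_\alpha u_\alpha$ is the partial trace in the statement, and $X_\alpha$ is the same expression without $u_\alpha$. Your alternative form $u_\alpha^{1/2}T_\alpha u_\alpha^{1/2}$ is not equal to this in general, though the comparison step is unaffected. You are right that the ``only if'' direction needs $u_\alpha$ invertible. The paper cancels $u$ tacitly in its last line, and $u=0$ gives the Schur idempotent $A=0$, which violates the condition.

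The substantive issue is the constant $c_\alpha$, which you introduce and then silently drop. Take $\varphi_N=\Tr_N(Q_N\,\cdot\,)$ with $\Tr_N=\sum_\alpha n(\alpha)\Tr(1_\alpha\,\cdot\,)$ the Markov trace. Pairing against $\Lambda(a)\otimes\Lambda(b)$ gives
\[ m_N^*(e^\alpha_{ij}Q_N^{-1})=\frac{1}{n(\alpha)}\sum_k e^\alpha_{ik}Q_N^{-1}\otimes e^\alpha_{kj}Q_N^{-1}. \]
So $c_\alpha=n(\alpha)^{-1}$, and executing your plan honestly yields $(\Tr\otimes\id)(\cdots)=n(\alpha)$. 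That is the displayed condition with the normalised trace, not the one stated.

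The paper's proof quotes this formula without the factor $n(\alpha)^{-1}$. That version is correct only when $Q_N$ is the density relative to the unnormalised trace, and the displayed identity inherits the slip. Your tracial sanity check detects it once you also set $Q_M=1$. For $M=N=\mathbb M_n$, $\theta=\id$ and $\varphi=\Tr_M$, one has $A=\lambda\,\id$ and $A\star A=\lambda^2 m m^*=\lambda^2$. Schur idempotency therefore needs $\lambda=1$, whereas the displayed condition forces $n\lambda=1$. The construction that follows in the paper still works, with $u_\alpha=n(\alpha)X_\alpha^{-1}$ in place of $X_\alpha^{-1}$.
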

\begin{proof}
Let $e_{i,j}$ denote the matrix units of a matrix algebra $\mathbb M_n$, and let $(e^\alpha_{i,j})$ denote the matrix units of $\mathbb M_{n(\alpha)} \subseteq N$.
A formula for $m^*$ is computed before \cite[Definition~2.9]{daws_quantum_graphs}, for example, and this gives
\[ m_N^*(e^\alpha_{i,j}Q_N^{-1}) = \sum_k e^\alpha_{i,k}Q_N^{-1} \otimes e^\alpha_{k,j}Q_N^{-1}. \]
Notice that the linear span of elements of the form $e^\alpha_{i,j}Q_N^{-1}$ gives all of $N$, and that $e^\alpha_{i,j}Q_N^{-1} = e^\alpha_{i,j}Q_{N_\alpha}^{-1}$.  For such elements, we have
\begin{align}
& m_M(A\otimes A)m_N^*(e^\alpha_{i,j} Q_N^{-1})
= \sum_k A(e^\alpha_{i,k}Q_{N,\alpha}^{-1}) A(e^\alpha_{k,j}Q_{N,\alpha}^{-1}) \notag \\
&= \sum_k Q_M^{-1/4} u \theta(Q_{N,\alpha}^{1/4} e^\alpha_{i,k} Q_{N,\alpha}^{-3/4}) Q_M^{-1/2} u \theta(Q_{N,\alpha}^{1/4} e^\alpha_{k,j} Q_{N,\alpha}^{-3/4}) Q_M^{-1/4},
\label{eq:AstarA}
\end{align}
here again using that $u\in\theta(N)'$.
The middle of this expression is
\[ \sum_k \theta(e^\alpha_{i,k}) \theta(Q_{N,\alpha}^{-3/4}) \theta(1_\alpha) Q_M^{-1/2} \theta(1_\alpha) u \theta(Q_{N,\alpha}^{1/4}) \theta(e^\alpha_{k,j}). \]
As this is an element of $\theta(1_\alpha) M \theta(1_\alpha)$, we use the isomorphism, and work in $\mathbb M_{n(\alpha)} \otimes \mc B(K_\alpha)$, and so obtain
\begin{align}
&\sum_k (e^\alpha_{i,k} Q_{N,\alpha}^{-3/4} \otimes 1) \theta(1_\alpha) Q_M^{-1/2} \theta(1_\alpha) (1\otimes u_\alpha) (Q_{N,\alpha}^{1/4} e^\alpha_{k,j} \otimes 1) \notag \\
&= e^\alpha_{i,j} \otimes (\Tr\otimes\id)\big( (Q_{N,\alpha}^{-3/4} \otimes 1) (\theta(1_\alpha) Q_M^{-1/2} \theta(1_\alpha)) (1\otimes u_\alpha) (Q_{N,\alpha}^{1/4} \otimes 1) \big).  \label{eq:should_collapse}
\end{align}

We compare \eqref{eq:AstarA} to
\[ A(e^\alpha_{i,j} Q_N^{-1}) =  Q_M^{-1/4} u \theta(Q_{N_\alpha}^{1/4} e^\alpha_{i,j} Q_{N,\alpha}^{-3/4}) Q_M^{-1/4}. \]
It follows that $A\star A = A$ if and only if \eqref{eq:should_collapse} is equal to simply $e^\alpha_{i,j} \otimes 1$, which is exactly the condition claimed in the lemma statement.
\end{proof}

We now proceed to show that a suitable $u$ always exists, and show how to construct it.  For a given $\alpha$, set
\[ t = \theta(1_\alpha) Q_M^{-1/2} \theta(1_\alpha) = \sum_{i,j} e^\alpha_{i,j} \otimes t_{i,j} \in \mathbb M_{n(\alpha)} \otimes \mc B(K_\alpha) = \mc B(\im\theta(1_\alpha)), \]
say.  The condition from Lemma~\ref{lem:cond_u_SI} becomes
\begin{align*}
1 &=
\sum_{i,j} (\Tr\otimes\id)\big( (Q_{N,\alpha}^{-3/4} \otimes 1) (e^\alpha_{i,j} \otimes t_{i,j} u_\alpha) (Q_{N,\alpha}^{1/4} \otimes 1) \big) \\
&= \sum_{i,j} \Tr(Q_{N,\alpha}^{-3/4} e^\alpha_{i,j} Q_{N,\alpha}^{1/4})  t_{i,j} u_\alpha
= \sum_{i,j} (Q_{N,\alpha}^{-1/2})_{j,i}  t_{i,j} u_\alpha.
\end{align*}
So define
\[ v = \sum_{i,j} (Q_{N,\alpha}^{-1/2})_{j,i} t_{i,j} \in \mc B(K_\alpha). \]

\begin{lemma}\label{lem:v_is_pos_inv}
The operator $v$ is positive and invertible, and $1\otimes v$ is the restriction of an element of $M$ to $\im\theta(1_\alpha)$.
\end{lemma}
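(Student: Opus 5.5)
The approach is to exhibit $v$ as a partial trace of a positive invertible operator, and then to place $1\otimes v$ inside the commutant of $\theta_\alpha(\mathbb M_{n(\alpha)})$ within $\theta(1_\alpha)M\theta(1_\alpha)$.

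Write $q=Q_{N,\alpha}^{-1/2}$, a positive invertible element of $\mathbb M_{n(\alpha)}$. By definition,
\[ v=\sum_{i,j} q_{j,i}t_{i,j} = (\Tr\otimes\id)\big((q\otimes 1)t\big) = (\Tr\otimes\id)\big((q^{1/2}\otimes 1)\,t\,(q^{1/2}\otimes 1)\big), \]
where the last equality uses the trace property of $\Tr$ in the first leg. The element $t=\theta(1_\alpha)Q_M^{-1/2}\theta(1_\alpha)$ is a compression of the positive invertible $Q_M^{-1/2}$ to $\im\theta(1_\alpha)$. Hence there is $\epsilon>0$ with $t\geq\epsilon\, 1$ on $\im\theta(1_\alpha)$, and so $(q^{1/2}\otimes1)t(q^{1/2}\otimes1)\geq \epsilon\,(q\otimes 1)\geq \epsilon'\,1$ for some $\epsilon'>0$. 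The partial trace $\Tr\otimes\id$ is positive and sends $1$ to $n(\alpha)1$, so $v\geq \epsilon' n(\alpha)1$. This shows that $v$ is positive and invertible.

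For the second claim, I will show that $1\otimes v$ lies in the relative commutant $(\theta_\alpha(\mathbb M_{n(\alpha)}))'\cap\theta(1_\alpha)M\theta(1_\alpha)$. The element $\theta(1_\alpha)$ is a projection in $M$, so $\theta(1_\alpha)M\theta(1_\alpha)$ is a von Neumann algebra on $\im\theta(1_\alpha)\cong\mathbb C^{n(\alpha)}\otimes K_\alpha$ containing $\mathbb M_{n(\alpha)}\otimes1$. The key identity is
\[ 1\otimes v=\sum_k (e^\alpha_{k,1}\otimes1)\,(q^{1/2}\otimes 1)\,t\,(q^{1/2}\otimes1)\,(e^\alpha_{1,k}\otimes1), \]
which is the standard expression of the partial trace as $1\otimes(\Tr\otimes\id)(X)=\sum_{k,l}(e_{k,l}\otimes1)X(e_{l,k}\otimes 1)$. (One may use either the single sum above with the $e_{1,k}$ fixed, or the double sum over $k,l$, which is $1\otimes(\Tr\otimes\id)(X)$.)

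Every factor in this sum lies in $\theta(1_\alpha)M\theta(1_\alpha)$:
\begin{itemize}
\item $e^\alpha_{k,l}\otimes1=\theta(e^\alpha_{k,l})$;
\item $q^{1/2}\otimes1=\theta(Q_{N,\alpha}^{-1/4})$;
\item $t\in\theta(1_\alpha)M\theta(1_\alpha)$ by definition.
\end{itemize}
So the element $\sum_{k,l}\theta(e^\alpha_{k,l})\,\theta(Q_{N,\alpha}^{-1/4})\,t\,\theta(Q_{N,\alpha}^{-1/4})\,\theta(e^\alpha_{l,k})$ lies in $M$. Since it equals $n(\alpha)$-independent $1\otimes v$ (with the double-sum normalization), $1\otimes v$ is the restriction of an element of $M$, as required.

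The only delicate step is checking the partial-trace identity in the double-sum form:
\[ \sum_{k,l}(e_{k,l}\otimes1)(e_{i,j}\otimes s)(e_{l,k}\otimes1)=\delta_{ij}\sum_k e_{k,k}\otimes s = 1\otimes(\Tr\otimes\id)(e_{i,j}\otimes s). \]
Once this is verified, everything else is bookkeeping. If needed, the element $\theta(1_\alpha)$ in the formula can be replaced by one extended by $0$ off $\im\theta(1_\alpha)$.
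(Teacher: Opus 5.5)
Your proof is correct. For the second claim it is the paper's argument: expanding your double sum $\sum_{k,l}\theta(e^\alpha_{k,l})\theta(Q_{N,\alpha}^{-1/4})\,t\,\theta(Q_{N,\alpha}^{-1/4})\theta(e^\alpha_{l,k})$ gives exactly the paper's element $\sum_{i,j,k}(Q_{N,\alpha}^{-1/2})_{j,i}\theta(e^\alpha_{k,i})Q_M^{-1/2}\theta(e^\alpha_{j,k})$ of $M$.

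For positivity and invertibility your route differs. The paper works vector by vector. It writes $(\xi|v\xi)=\Tr(q^{1/2}t^\xi q^{1/2})$, where $t^\xi$ is the positive matrix $\big((\delta_i\otimes\xi|t(\delta_j\otimes\xi))\big)_{i,j}$. It then gets invertibility from injectivity together with finite-dimensionality: if $v\xi=0$ then $t^\xi=0$, hence $\theta(1)(\eta\otimes\xi)=0$ for every $\eta$, so $\xi=0$. You instead write $v=(\Tr\otimes\id)\big((q^{1/2}\otimes1)t(q^{1/2}\otimes1)\big)$. You then push the operator inequality $t\geq\epsilon 1$ through the conjugation and the positive map $\Tr\otimes\id$, obtaining the explicit bound $v\geq n(\alpha)\epsilon\delta\,1$, where $q\geq\delta 1$. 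This is shorter and quantitative, and it avoids the injective-implies-invertible step, so it would not need $K_\alpha$ to be finite-dimensional. The paper's computation is in effect an unpacking of positivity of the partial trace by hand.

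One correction: the single-sum ``key identity'' you display is false. Write $X=\sum_{i,j}e_{i,j}\otimes X_{i,j}$. Then $\sum_k(e_{k,1}\otimes1)X(e_{1,k}\otimes1)=1\otimes X_{1,1}$, not $1\otimes(\Tr\otimes\id)(X)$. Both indices must be summed, as in the double-sum identity you verify at the end, which gives $1\otimes v$ exactly with no normalising factor. Delete the single-sum version and the remark that either form may be used.
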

\begin{proof}
Let $q = Q_{N,\alpha}^{-1/2}$ a positive matrix, so for $\xi\in K_\alpha$ we have
\[ (\xi|v\xi) = \sum_{i,j} q_{j,i} \big( \xi \big| t_{i,j} \xi \big)
= \sum_{i,j} q_{j,i} (\delta_i\otimes\xi|t(\delta_j\otimes\xi))
= \sum_{i,j} q_{j,i} t^\xi_{i,j}, \]
say, where $t^\xi$ is a positive matrix, because for $\eta\in\mathbb C^{n(\alpha)}$,
\[ (\eta | t^\xi(\eta))
=\sum_{i,j} \overline{\eta_i} (\delta_i\otimes\xi|t(\delta_j\otimes\xi)) \eta_j
= (\eta\otimes\xi|t(\eta\otimes\xi)) \geq 0, \]
as $t$ is positive.  So 
\[ (\xi|v\xi) = \Tr(qt^\xi) = \Tr(q^{1/2} t^\xi q^{1/2}) \geq 0, \]
and we conclude that $v$ is positive.

This calculation also shows that if $v\xi=0$ then $\Tr(q^{1/2}t^\xi q^{1/2})=0$ so $(t^\xi)^{1/2} q^{1/2}=0$, so $t^\xi=0$, as $q$ is invertible.  Then $(\eta\otimes\xi|t(\eta\otimes\xi))=0$ for each $\eta$, so $(\theta(1)(\eta\otimes\xi)|Q_M^{-1/2}\theta(1)(\eta\otimes\xi)) = 0$ and hence $\theta(1)(\eta\otimes\xi)=0$, for each $\eta$.  However, $\eta\otimes\xi \in \im\theta(1)$, and so we conclude that $\xi=0$.  So $v$ is injective, and hence invertible.

For any $i,j,k$, using the isomorphism $\im\theta(1_\alpha) \cong \mathbb M_{n(\alpha)} \otimes \mc B(K_\alpha)$, we have
\begin{align*}
&    
\theta(e^\alpha_{k,i}) Q_M^{-1/2} \theta(e^\alpha_{j,k})
= (e_{k,i}\otimes 1) t (e_{j,k}\otimes 1) = e_{k,k} \otimes t_{i,j}  \\
\implies &
1\otimes v = \sum_{i,j,k} e_{k,k} \otimes (Q_{N,\alpha}^{-1/2})_{j,i} t_{i,j}
= \sum_{i,j,k} (Q_{N,\alpha}^{-1/2})_{j,i} \theta(e^\alpha_{k,i}) Q_M^{-1/2} \theta(e^\alpha_{j,k}),
\end{align*}
which is a member of $M$ (or more accurately, the restriction of this element to $\im\theta(1_\alpha)$.)
\end{proof}

Given the lemma, we may hence define $u_\alpha$ to be the inverse of $v$, for each $\alpha$.
The projections $\theta(1_\alpha)$ are mutually orthogonal and sum to $\theta(1)$.  We can hence make sense of $u = \sum_\alpha \theta(1_\alpha) (1\otimes u_\alpha) \theta(1_\alpha)$ in the obvious way.  By the lemma, we see that $u\in M$, while by construction, $u$ commutes with $\theta(N)$.  As each $u_\alpha$ is positive, also $u$ is positive.  Our notation is consistent in that restricting $u$ back to $\im\theta(1_\alpha)$ does give the operator $1\otimes u_\alpha$, which is invertible.  Thus $u$ satisfies the conditions of Lemmas~\ref{lem:when_u_inv} and~\ref{lem:cond_u_SI}, and hence $A$ is the adjacency operator associated to $V^{\theta *}$.

It remains an interesting question if we can somehow ``compute'' the adjacency operator $A$ associated with a general $V^\theta$, where $\theta$ is only CP.  Thinking about Section~\ref{sec:CP_to_QR_again}, this is very related to computing the support projection of a positive operator.  Even in the classical situation, the task seems hard to do except by very explicit calculation.

\newcommand{\scr}{\mc}
\bibliographystyle{plain}
\bibliography{thebib.bib}

\end{document}